\documentclass[11pt, letterpaper]{amsart}
\usepackage{amsmath,amssymb,mathrsfs, xcolor}
\usepackage{hyperref}
\setlength{\parskip}{4pt}
\hypersetup{linkbordercolor = {white}, citebordercolor = {white},}
\usepackage[alphabetic]{amsrefs}

\headheight=6.15pt \textheight=8.3in \textwidth=6.5in
\oddsidemargin=0in \evensidemargin=0in \topmargin=0in

\usepackage[alphabetic]{amsrefs}

\pagestyle{headings}

\setlength\parindent{0.5cm}

\numberwithin{equation}{section}

\theoremstyle{definition}
\newtheorem{thm}[equation]{Theorem} 
\theoremstyle{definition}
\newtheorem{definition}[equation]{Definition}
\newtheorem{prop}[equation]{Proposition} 
\newtheorem{cor}[equation]{Corollary}

\newtheorem{lemma}[equation]{Lemma}
\newtheorem{remark}[equation]{Remark}
\newtheorem{conj}[equation]{Conjecture}
\newtheorem{claim}[equation]{Claim}

\newcommand{\ovl}[1]{\overline{#1}}
\newcommand{\udl}[1]{\underline{#1}}
\newcommand{\pair}[1]{\langle #1\rangle}
\newcommand{\set}[1]{\left\{#1\right\}}

\newcommand{\pr}[1]{\left(#1\right)}
\newcommand{\abs}[1]{\left|#1\right|}
\newcommand{\bb}[1]{\mathbb{#1}} \newcommand{\td}[1]{\widetilde{#1}}

\newcommand{\n}{\nabla}

\newcommand{\sbst}{\subseteq}
\newcommand{\h}{\textbf H}

\newcommand{\bd}{\partial}

\newcommand{\N}{\mathbf{n}}
\newcommand{\C}{\mathcal C}

\title{Arnold--Thom conjecture for the arrival time of surfaces}

\author{Tang-Kai Lee}
\author{Jingze Zhu}

\address{MIT, Dept. of Math., 77 Massachusetts Avenue, Cambridge, MA 02139-4307}
\email{tangkai@mit.edu and zhujz@mit.edu}


\date{\today}

\begin{document}
\begin{abstract}
    Following \L ojasiewicz's uniqueness theorem and Thom's gradient conjecture, Arnold proposed a stronger version about the existence of limit tangents of gradient flow lines for analytic functions. 
    We prove \L ojasiewicz's theorem and Arnold's conjecture in the context of arrival time functions for mean curvature flows in $\bb R^{n+1}$ with neck or non-degenerate cylindrical singularities.
    In particular, we prove the conjectures for all mean convex mean curvature flows of surfaces, 
    including the cases when the arrival time functions are not $C^2.$ 
    The results also apply to mean curvature flows starting from two-spheres or generic closed surfaces.
\end{abstract}
\maketitle


\section{\bf Introduction}\label{sec:intro}
In \cite{T89}, Thom conjectured that if a gradient flow line for an analytic function has a limit point, then the secants of this gradient flow line converge.
Later, Arnold \cite{A04} proposed a stronger conjecture suggesting the convergence of its tangent directions.
This is now known as the Arnold--Thom gradient conjecture.

\begin{conj}
    [Arnold--Thom]\label{conj:AT}
    If a gradient flow line $\gamma(t)$ for an analytic function has a limit point, then the limit of $\frac{\gamma'(t)}{\abs{\gamma'(t)}}$ exists.
\end{conj}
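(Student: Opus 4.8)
\noindent\emph{Proof proposal.} The plan is to deduce the statement from two ingredients. The first is a {\L}ojasiewicz gradient inequality near the limit point of $\gamma$: after translating so that the limit point is $0$ and $f(0)=0$, analyticity of $f$ provides constants $c>0$ and $\theta\in(0,\tfrac12]$ with $\abs{\nabla f(x)}\ge c\,\abs{f(x)}^{1-\theta}$ on a neighborhood of $0$. The second, which is where Arnold's conjecture goes beyond {\L}ojasiewicz's uniqueness theorem and Thom's secant conjecture, is a quantitative bound on the turning of $\gamma'$.

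First I would run the classical {\L}ojasiewicz argument for finite length. Along the flow $f(\gamma(t))$ is nonincreasing with limit $0$, so we may assume $f(\gamma)>0$; setting $h:=f^{\theta}\circ\gamma$ and using $\gamma'=-\nabla f(\gamma)$ gives $-\tfrac{d}{dt}h=\theta f^{\theta-1}\abs{\nabla f}^2\ge c\theta\,f^{\theta-1}\abs{f}^{1-\theta}\abs{\nabla f}=c\theta\,\abs{\gamma'}$, hence $\int_0^{\infty}\abs{\gamma'}\,dt\le (c\theta)^{-1}h(\gamma(0))<\infty$. Thus $\gamma(t)$ converges, necessarily to a critical point, recovering {\L}ojasiewicz's theorem. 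In the mean curvature flow application this step is replaced by the {\L}ojasiewicz--Simon inequality for the Gaussian-weighted area functional near a cylindrical shrinker, and the finite-length conclusion becomes uniqueness of the tangent flow.

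Next I would upgrade finite length to convergence of $u(t):=\gamma'(t)/\abs{\gamma'(t)}=-\nabla f(\gamma)/\abs{\nabla f(\gamma)}$. Differentiating, $u'$ is the component of $-\Hess f(\gamma)\cdot\nabla f/\abs{\nabla f}^2$ orthogonal to $\nabla f$, so it suffices to prove $\int_0^{\infty}\abs{u'(t)}\,dt<\infty$. The strategy is to bound $\abs{u'(t)}$ by $-\tfrac{d}{dt}\psi(\gamma(t))$ for a suitable analytic (or definable) function $\psi$ monotone along the flow, so that integrating telescopes. In the analytic case one natural choice comes from the ``valley'' (talweg) of $f$: one shows the trajectory is asymptotically tangent to this semianalytic curve, along which $\nabla f/\abs{\nabla f}$ has a limit by analyticity, and then controls the deviation of $\gamma$ from it. In the mean curvature flow setting, the analogue is an asymptotic expansion of the rescaled flow near the shrinker: decompose the graphical perturbation over the Jacobi operator of the cylinder, isolate the dominant eigenmode, and use the neck or non-degenerate cylindrical hypothesis to guarantee this mode is nontrivial and rigid, which pins down the limiting tangent direction.

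The main obstacle is precisely this last step: finite length of $\gamma$ does not by itself control the oscillation of $\gamma'$, so the proof needs the extra structural input --- the talweg, or the spectral asymptotics --- together with estimates that are uniform as $\gamma\to 0$ (respectively, as the flow reaches the singular time). In the mean curvature flow context there is a further difficulty highlighted in the abstract: when the arrival time function is merely $C^1$, the expression for $u'$ in terms of $\Hess f$ is not literally available, so the argument must be carried out on the level of the rescaled geometric flow and its smooth graphical representation over the shrinker rather than on the Euclidean gradient trajectory, and transferring conclusions between these two pictures is where I expect most of the technical work to lie.
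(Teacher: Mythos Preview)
The statement you are trying to prove is Conjecture~\ref{conj:AT}, which the paper states as an \emph{open problem}: the introduction says explicitly that ``Conjecture~\ref{conj:AT}, carrying finer first-order asymptotic information of gradient flow lines, is still open.'' The paper does not contain a proof of this statement for analytic functions; its theorems concern the entirely different setting of arrival time functions of mean curvature flows (which are not analytic, and in general not even $C^2$), and the proofs there rely on geometric classification results (canonical neighborhoods, Brendle--Choi, Choi--Haslhofer--Hershkovits--White) rather than on any general mechanism for analytic gradient flows. So there is no ``paper's own proof'' to compare against.

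Your proposal, correspondingly, is not a proof but a program sketch, and you acknowledge this yourself (``The main obstacle is precisely this last step''). The first half correctly reproduces the classical {\L}ojasiewicz finite-length argument, but that only yields convergence of $\gamma(t)$, which is Thom's input, not Arnold's conclusion. For the tangent direction you propose to show $\int|u'|<\infty$ via a telescoping bound or a talweg argument; however, the talweg/characteristic-exponent machinery of Kurdyka--Mostowski--Parusi\'nski establishes convergence of \emph{secants} (Thom's conjecture), and it is precisely the passage from secants to tangents that remains unresolved in general. Your suggestion to bound $|u'|$ by the derivative of some monotone definable $\psi$ is the natural thing to try, but no such $\psi$ is known to exist, and your proposal does not supply one. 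The second half of your write-up drifts into the mean curvature flow setting, but the paper's actual arguments there (uniqueness of limit flows along a flow line via backward cylindricality propagation, then either reduction to Colding--Minicozzi's $C^2$ theory or direct treatment of the bowl case) are quite different from the spectral-asymptotics picture you outline, and in any case they do not transfer back to give anything for the analytic conjecture.
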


Thom's conjecture is based on the \L ojasiewicz theorem about the zero-order asymptotic behavior of gradient flow lines.
In \cite{L84}, \L ojasiewicz proved that if a gradient flow line of an analytic function has a limit point, then the flow line has finite length, and hence has a unique limit point.
His proof used two important inequalities for analytic functions, now known as the \L ojasiewicz inequalities \cite{L}.
Based on the work of \L ojasiewicz, Kurdyka--Mostowski--Parusi\'nski proved Thom's conjecture in \cite{KMP} by showing that the curve of secants has finite length after being projected to the unit sphere.
However, Conjecture~\ref{conj:AT}, carrying finer first-order asymptotic information of gradient flow lines, is still open.

In this paper, we study the \L ojasiewicz theorem and the Arnold--Thom conjecture for solutions to the arrival time equation
\begin{align}\label{arv-eq}
	-1 = |\n u|\, {\rm div}\pr{\frac{\n u}{|\n u|}}
\end{align}
on mean convex domains in $\bb R^{n+1}.$
This is a degenerate elliptic partial differential equation, so its solutions are far from being analytic.
By the works of Evans--Spruck \cite{ES} (cf. \cite{CGG}) and Colding--Minicozzi \cite{CM16}, this equation admits a unique $C^{1,1}$ solution in any smooth compact mean convex domain given a Dirichlet boundary condition.
This equation is connected to mean curvature flows in the following geometric way.
If $u$ is an arrival time function in a mean convex domain $\Omega\sbst\bb R^{n+1},$ that is, $u$ solves the equation \eqref{arv-eq} in $\Omega,$ then its level set $M_t := \set{x\in\Omega: u(x)=t}$ evolves by the mean curvature flow.
In other words, the mean curvature vector $\h = -H\N$ of $M_t$ satisfies
$\bd_t x = \h.$
Here, a smooth domain is mean convex if its boundary has non-negative mean curvature with respect to its outer unit normal.

\subsection{Main results}
\label{sec:main-results}

Consider a gradient flow line for the arrival time function $u$, which is a solution to \eqref{arv-eq}. 
In this context, we prove the \L ojasiewicz theorem and solve the Arnold--Thom conjecture for arrival time functions in $\bb R^3$ (cf. \cite[Conjecture~1.4]{CM19}).

\begin{thm}
\label{thm:main}
    If $\gamma$ is a gradient flow line for the arrival time function of a bounded mean convex domain with a limit point,
    then the limit point is unique and the unit tangents $\frac{\gamma'(t)}{\abs{\gamma'(t)}}$ of $\gamma$ converge.
\end{thm}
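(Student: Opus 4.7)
The plan is to reduce the problem to the local behavior of $\gamma$ near a singular point of the mean curvature flow generated by the level sets of $u$, and then to exploit Colding--Minicozzi's rate of convergence to the cylindrical tangent flow.

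If $\gamma$ has a limit point $x_0$, then $x_0$ must be a critical point of $u$: otherwise $u$ would be smooth in a neighborhood and the integral curve would pass through $x_0$ in finite time, contradicting accumulation. For the arrival time, the zero set of $\n u$ coincides with the singular set of the mean curvature flow $\set{M_t}$, so $x_0$ is singular for the flow. By the classification of singularities for mean convex flows of surfaces (Huisken, White, Haslhofer--Kleiner, and others), the tangent flow at $x_0$ is either a shrinking round sphere or a shrinking round neck $\bb S^1 \times \bb R$. The spherical case is straightforward; the main work concerns the neck case.

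For a neck singularity, Colding--Minicozzi's uniqueness of cylindrical tangent flows supplies a unique axis direction $\mathbf a \in \bb S^2$, and their \L ojasiewicz--Simon inequality for the $F$-functional yields a polynomial rate of convergence of the rescaled level sets to the model cylinder. Passing this information to $u$ in self-similar coordinates $y = (x - x_0)/\sqrt{-(u - u(x_0))}$, I would derive two estimates: (i) a \L ojasiewicz inequality $|\n u|^2 \ge c\,(u(x_0) - u)^{1+\alpha}$ along $\gamma$ for some $\alpha < 1$, and (ii) an angular estimate showing that the projection of $\n u$ onto $\mathbf a$ asymptotically dominates its perpendicular part. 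Bound (i), together with $\frac{d}{dt}(u \circ \gamma) = |\n u|^2$, implies by a standard integration that $\int |\gamma'(t)|\, dt < \infty$, giving uniqueness of the limit point. Bound (ii) then forces $\gamma'(t)/|\gamma'(t)| \to \pm \mathbf a$, completing Arnold--Thom.

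The principal obstacle is estimate (ii). Whereas (i) is a fairly direct consequence of the \L ojasiewicz--Simon machinery, Arnold--Thom demands precise \emph{directional} control on $\n u$, not merely control of its magnitude. I would have to rule out a spiralling approach of $\gamma$ around the neck axis and show that the axis-parallel component of $\gamma'$ dominates its transverse component uniformly in the limit. This requires exploiting the spectral gap between the axis-translation mode of the linearization about the cylinder and the purely rotational modes, together with sharp asymptotics of the graphical function representing the level sets over the model cylinder. An additional subtlety is that $u$ is not $C^2$ at a neck singularity, which prevents any naive ODE analysis near $x_0$ in the ambient space and forces all estimates to be carried out in rescaled self-similar coordinates.
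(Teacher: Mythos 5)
There are genuine gaps, and one of your two key estimates is stated incorrectly. Your estimate (ii) asserts that the component of $\n u$ along the axis $\mathbf a$ of the tangent cylinder dominates, so that $\gamma'/|\gamma'|\to\pm\mathbf a$. This is false for the generic (transversal) approach to a neck: for the exact shrinking cylinder $S^1(\sqrt{-2t})\times\bb R$ the arrival time is $u(x)=-|x'|^2/2$ with $x'$ the component \emph{perpendicular} to the axis, so $\n u=-x'$ has zero axial component, and the unit tangents of a gradient flow line converge to a direction orthogonal to $\mathbf a$. The limit tangent is parallel to $\mathbf a$ only when the flow line approaches along the axis, in which case the relevant limit flows are translating bowl solitons, not cylinders, and the rescaled flow near $\gamma(t)$ is not graphical over the model cylinder at all — so the spectral-gap analysis of the linearization about the cylinder gives no control there. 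What is actually needed, and what you do not address, is a dichotomy: along a fixed gradient flow line either every limit flow is a shrinking cylinder (transversal approach; then $\gamma$ stays in a cone $|\Pi_{\rm axis}(x)|\le d|\Pi(x)|$ where $u$ is $C^2$ by Colding--Minicozzi's regularity theorem and their $C^2$ argument applies) or every limit flow is the \emph{same} translating bowl (axial approach; then the tangents converge to the translation direction). Proving this uniqueness of the limit-flow type, via a backward stability/propagation argument for $\varepsilon$-cylindricality along the flow line, is the central new ingredient; without it one cannot even decide which of the two asymptotic regimes one is in.

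Your estimate (i) is also not "a fairly direct consequence of the \L ojasiewicz--Simon machinery." The known \L ojasiewicz inequality for the arrival time requires $u\in C^2$ near the singularity, which fails at generic neck singularities (Ilmanen's dumbbell), and the \L ojasiewicz--Simon inequality for the $F$-functional controls the rescaled flow in Gaussian norms at the fixed singular point, not the pointwise size of $\n u$ at $\gamma(t)$ — in the axial case $\gamma(t)$ lies in exactly the cap region where that convergence degenerates. Uniqueness of the limit point must therefore be obtained differently: one shows the set of limit points of $\gamma$ is compact and connected, that near a neck singularity it lies in the $C^1$ curve given by the structure theorem for the singular set, and that if it contained more than one point then $u$ would be $C^2$ at an interior point of that curve (by the characterization of $C^2$ regularity in terms of the singular set being a $C^1$ submanifold), at which point the $C^2$ \L ojasiewicz inequality yields a contradiction. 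So both halves of your reduction — the zero-order statement (i) and the first-order statement (ii) — rest on steps that either fail as stated or require the main missing arguments.
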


This theorem is a consequence of a more general result that we show in $\bb R^{n+1}$ with $n\geq 2$.
In fact, we prove the \L ojasiewicz theorem and the Arnold--Thom conjecture when a limit point of a gradient flow is a neck singularity. 
The precise statement is recorded in Theorem~\ref{thm:AT-neck}.
The condition of Theorem~\ref{thm:AT-neck} is automatically true if the initial closed hypersurface $M\sbst\bb R^{n+1}$ satisfies one of the following conditions (cf. \cite{HS09,CM162,HK2,G24}).\\
(1) $M$ is two-convex.\\
(2) The entropy of $M$ satisfies $\lambda(M) \leq \lambda\pr{S^{n-2}\times\bb R^2}.$\\
Thus, we prove the \L ojasiewicz theorem and solve the Arnold--Thom conjecture in $\bb R^{n+1}$ in the two-convex case and the low-entropy case.

Another situation that we can work on in the higher dimensional case is the non-degenerate condition, which is also exploited in \cite{SS}.
A cylindrical singularity is called non-degenerate if, roughly speaking, the flow looks like a cylinder when zoomed in in any directions and scales.
That is, all limit flows\footnote{In the study of geometric flows, limiting models are described using the notion of limit flows and tangent flows.
See Definition~\ref{def:lim-flow}.}
are cylinders.
A priori, this assumption does not exclude the possibility that under different scales and directions, the flow looks like cylinders of different sizes or axes.
We prove that this cannot happen.
Using this result, we then prove the \L ojasiewicz theorem and the Arnold--Thom conjecture for gradient flow lines having a non-degenerate cylindrical singularity as a limit point.

\begin{thm}\label{thm:nondeg-cyl}
    Let $M_t$ be a closed mean convex mean curvature flow in $\bb R^{n+1}.$ 
    If $\gamma$ is a gradient flow line for the arrival time function of $M_t$ with a non-degenerate limit point,
    then the limit flow is unique, the limit point is unique, and the unit tangents of $\gamma$ converge.
\end{thm}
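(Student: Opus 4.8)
The plan is to translate the statement into an asymptotic analysis of the rescaled mean curvature flow near its cylindrical model, where the three conclusions become, respectively, uniqueness of the cylinder, convergence of the axial coordinate of a distinguished particle, and convergence of its spherical coordinate. First I would reparametrize $\gamma$ by the arrival time: setting $t=u(\gamma)$ and using \eqref{arv-eq} in the form $\abs{\n u}\,H=1$ on the level sets, the curve $t\mapsto\gamma(t)$ satisfies $\frac{d\gamma}{dt}=\h_{M_t}(\gamma(t))$, so $\gamma$ is simply a point transported by the level-set flow, with $\gamma(t)\in M_t$ for each $t$, and whose unit tangent equals $-\N_{M_t}(\gamma(t))$ (for the outer normal $\N$). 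Let $(p,t_0)$ be the space-time point attached to the given non-degenerate limit point and rescale parabolically about it via $s=-\log(t_0-t)$, $\tilde M_s=e^{s/2}(M_t-p)$, $\tilde\gamma(s)=e^{s/2}(\gamma(t)-p)$. A short computation gives $\frac{d\tilde\gamma}{ds}=\tfrac12\tilde\gamma+\h_{\tilde M_s}(\tilde\gamma)$, so $\tilde\gamma$ is a particle carried by the rescaled flow, with the extra tangential drift $\tfrac12\tilde\gamma^{\top}$. On a static cylinder $\C=S^{k}(\sqrt{2k})\times V$ --- with $x=(y,z)$, $y\in S^{k}(\sqrt{2k})\sbst\bb R^{k+1}$, $z\in V\sbst\bb R^{n-1-k}$ --- this drift reads $\dot y=0$, $\dot z=\tfrac12 z$, so it exactly reproduces the cylinder's own motion: the spherical coordinate is fixed and the axial one grows like $e^{s/2}$. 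Hence all three conclusions follow once we know (i) $\tilde M_s$ converges to a single cylinder $\C$, (ii) the deviation of $\tilde M_s$ from $\C$ near $\tilde\gamma$ is integrable in $s$, and (iii) the spherical and axial components of $\tilde\gamma$ behave as for the model up to this integrable error.

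The heart of the matter is (i), which also subsumes the \L ojasiewicz conclusion. By non-degeneracy every limit flow at $(p,t_0)$ is a cylinder; since they all carry the same Gaussian density $\Theta$, Huisken's monotonicity forces the same radius, so all of them have the form $S^{k}(\sqrt{2k})\times(\text{affine }(n{-}1{-}k)\text{-plane})$ for one fixed $k$, and what must be shown is that the axis is the same for all scales and all nearby centers. Here I would invoke the \L ojasiewicz--Simon inequality of Colding--Minicozzi for the Gaussian area functional $F$ near the cylinder: once $\tilde M_s$ is $C^{2,\alpha}$-close to some cylinder, $F(\tilde M_s)-\Theta$ decays at a definite rate, the rescaled flow has finite length in the weighted $L^2$ sense, and the total rotation of the axis over $[s,\infty)$ is bounded by a power of $F(\tilde M_s)-\Theta$. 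Non-degeneracy is precisely what prevents $\tilde M_s$ from ever leaving a small neighborhood of the space of cylinders --- there is no competing non-cylindrical limit to escape to --- so this estimate can be propagated across all scales by a continuity argument. This simultaneously yields the \L ojasiewicz finite-length/unique-limit statement, uniqueness of the tangent flow, uniqueness of all limit flows (this is the uniqueness-of-the-limit-flow conclusion asserted here), and the rate $\|\tilde M_s-\C\|_{C^{2,\alpha}(B_R)}\le C_R e^{-\delta s}$ on compact sets (the polynomial \L ojasiewicz rate being upgraded to exponential by the spectral gap of the Jacobi operator on $\C$), with Gaussian-weighted control globally. This is the same mechanism behind Theorem~\ref{thm:AT-neck}, with the non-degeneracy hypothesis replacing the two-convexity/low-entropy inputs used there.

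Given the unique cylinder $\C$, I would then write $\tilde M_s$ as a normal graph of a function $v(\cdot,s)$ over a large fixed piece of $\C$, with $\|v\|_{C^{2,\alpha}}$ controlled as above, and decompose the base point $\tilde q(s)\in\C$ of $\tilde\gamma(s)$ as $\tilde q(s)=(y(s),z(s))$. Substituting the graph into the particle equation and projecting onto $T\C$ gives $\dot y=O(\|v\|_{C^1})$ and, in original axial coordinates, $\frac{d}{ds}\pr{e^{-s/2}z}=O(\|v\|_{C^1})\pr{1+e^{-s/2}\abs z}$; the $s$-integrability from the previous step then gives $y(s)\to y_\infty\in S^{k}(\sqrt{2k})$ and $e^{-s/2}z(s)\to z_\infty\in V$, so $\gamma(t)\to p+(0,z_\infty)$ --- a single point, which is the uniqueness of the limit point. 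Since the unit tangent of $\gamma$ equals $-\N_{\tilde M_s}(\tilde\gamma(s))=-\N_{\C}(\tilde q(s))+O(\|v\|_{C^1})=-\,y(s)/\sqrt{2k}+o(1)$, it converges to $-\,y_\infty/\sqrt{2k}$, which is the convergence of unit tangents. The step I expect to be the main obstacle is the propagation in the second paragraph --- making the \L ojasiewicz estimate uniform across all scales and all centers near $(p,t_0)$, so that the cylinder one sees cannot rotate or slide in the limit; a secondary technical nuisance is that $\tilde\gamma$ lives on the noncompact cylinder, so the pointwise bound on $v$ at $\tilde q(s)$ needed for the integrations above requires ruling out that $\tilde\gamma$ escapes along the end $\abs z\to\infty$, which one handles by a bootstrap from the axial-drift estimate together with pseudolocality and the known uniform asymptotics of mean-convex cylindrical singularities.
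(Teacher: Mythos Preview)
There are two genuine gaps.  The first is the claim that all limit flows ``carry the same Gaussian density $\Theta$'' and hence share the same spherical dimension.  Limit flows are blowups at \emph{varying} centers $(x_i,t_i)\to(p,t_0)$, not at $(p,t_0)$ itself; semicontinuity of entropy only yields $\lambda(S^{n-\ell})\le\lambda(S^{n-k})$, i.e.\ $\ell\le k$ (with $\ell=n$, a plane, also permitted).  Ruling out $\ell<k$ is real work: in the paper it is the two-scale contradiction argument of Theorem~\ref{lem:nondeg-unique}, which introduces scales $\rho_i$ (where the recentered rescaling first leaves an $\varepsilon$-neighborhood of $\C_k^0$) and $\underline\rho_i$ (where it is last close to $\C_\ell^0$) and derives a contradiction in each of the cases $\limsup\rho_i/\underline\rho_i<\infty$ and $=\infty$.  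The axis alignment (Part~2 there) then bounds the total rotation by the \L ojasiewicz inequality of \cite{CM15} and compares the curvature scale with $\rho_i$.  Your ``continuity argument'' stands in for exactly this, and it does not come for free; relatedly, the single rescaled flow $\tilde M_s$ centered at $(p,t_0)$ sees only the tangent flow, not the limit flows at varying centers that non-degeneracy speaks about.

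The second gap is the escape of $\tilde\gamma(s)$ along the axis: you need $\|v(\cdot,s)\|_{C^1}$ at $\abs z\sim e^{s/2}$, where no compact-set estimate applies, and ``pseudolocality plus bootstrap'' is not an argument.  The paper sidesteps this entirely by recentering rather than working with one rescaled flow.  Once Theorem~\ref{lem:nondeg-unique} shows that every blowup at $(\gamma(t),t)$ with scale $H(\gamma(t),t)$ is a $k$-cylinder with the \emph{fixed} tangent axis, one reads off $\abs{\Pi_{\rm axis}\gamma'}=o(\abs{\gamma'})$ and $\frac{1}{2n}H^3\le\bd_t H\le 2H^3$ directly.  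Integrating the latter gives $H(t)\le\sqrt{n/(-t)}$, hence $\abs{\gamma(t)}\le 2\sqrt{-nt}$ and $\abs{\Pi_{\rm axis}\gamma(t)}=o(\sqrt{-t})$, so $\gamma$ stays in a fixed cone $C_d$ transverse to the axis; on $C_d\cap B_\delta$ the function $u$ is $C^2$ by \cite{CM18}, and the argument of \cite{CM19} finishes (this is Case~2 of Theorem~\ref{thm:AT-neck}, invoked again in Theorem~\ref{thm:AT-cyl}).  The \L ojasiewicz conclusion (Theorem~\ref{thm-L2}) is proved separately by the same recentering: the local cylinder at each $(\gamma(t),t)$ gives $\bd_t H\ge H^3/n$, which integrates to a length bound trapping $\gamma$ near the given limit point.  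Incidentally, your claimed exponential rate for $\tilde M_s\to\C$ is also incorrect---cylinders have neutral Jacobi modes and the generic rate is only polynomial---but this is moot since the paper's route does not rely on any such rate.
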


Examples of non-degenerate cylindrical singularities were constructed \cite{GS09}.
The stability result of Schulze--Sesum \cite{SS} implies that there are more.
A related notion of non-degeneracy was discussed in \cite{SX4, SWX}.

Finally, we deal with general mean curvature flows with neck singularities when mean curvatures change signs or even when their level set flows fatten.
The key is the mean convex neighborhood theorem proven by Choi--Haslhofer--Hershkovits--White \cite{CHHW} (cf. \cite{CHH}), which implies that flows behave nicely near neck singularities.
This means that no matter how strangely a flow line goes, it will be constrained by the ordinary arrival time equation as in the mean convex case when the flow line approaches a neck singularity.
Therefore, we can apply the arguments in the mean convex case to get the convergence of tangents in this general setting.
In particular, with the recent breakthrough of Bamler--Kleiner \cite{BK23} about the multiplicity one conjecture, the \L ojasiewicz theorem and the Arnold--Thom conjecture are true for a mean curvature flow starting from a two-sphere (based on \cite{W95, Br16, HW, CHH}) or a generic surface in $\bb R^3$ (based on \cite{CCMS, CCS, SX1, SX2}).

\begin{cor}
    Let $M_t$ be a mean curvature flow starting from a two-sphere or a generic closed embedded surface in $\bb R^{3}.$
    If $\gamma$ is a flow line of $M_t$ with a limit point,
    then the limit point is unique and the unit tangents of $\gamma$ converge.
\end{cor}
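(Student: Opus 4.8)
The plan is to reduce the corollary to Theorem~\ref{thm:AT-neck} by exhibiting, near every limit point of $\gamma$, a local mean convex model to which that theorem applies. The first step is to classify the singularities of $M_t$ in the two cases. If $M_t$ starts from an embedded two-sphere in $\bb R^3$, then by the multiplicity one theorem of Bamler-Kleiner \cite{BK23} every tangent flow of $M_t$ has multiplicity one; by Brendle's classification of genus-zero self-shrinkers \cite{Br16}, together with White's stratification \cite{W95} and the analysis of cylindrical singularities \cite{CM16}, each such tangent flow is a round shrinking $S^2$ or a round shrinking $S^1\times\bb R$; and by Choi-Haslhofer-Hershkovits \cite{CHH} the flow admits a mean convex spacetime neighborhood of each such singular point. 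If instead $M_t$ starts from a generic closed embedded surface in $\bb R^3$, one invokes \cite{CCMS, CCS, SX1, SX2}: a generic perturbation of the initial datum leaves only (multiplicity-one) spherical and neck singularities, and these again carry mean convex neighborhoods, the remaining multiplicity-one hypotheses in those works being supplied by \cite{BK23}. In both cases the conclusion is the same: for every spacetime singular point $(x_0,T)$ of $M_t$ there is a parabolic neighborhood $U\times(T-\delta,T)$ on which $M_t$ restricts to a smooth compact mean convex mean curvature flow whose only singularity is a neck or spherical singularity at $(x_0,T)$.

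On such a neighborhood the arrival time function $u$ is defined and, by Evans-Spruck \cite{ES} and Colding-Minicozzi \cite{CM16}, is the unique $C^{1,1}$ solution of \eqref{arv-eq} with level sets $M_t\cap U$; after the standard reparametrization by level value, the restriction of a flow line $\bd_t x=\h$ to $(T-\delta,T)$ is a gradient flow line of $u$ in the sense of Theorem~\ref{thm:AT-neck}. Now let $\gamma$ be a flow line of $M_t$ with limit point $p$. If $p$ is a regular point of $M_T$, the flow is smooth there, so $\gamma$ is a smooth curve near $p$, whence its unit tangent converges and $p$ is its unique limit point. If $p$ is a singular point, then by the previous paragraph $p=x_0$ is a neck or spherical singularity possessing a mean convex neighborhood, the tail of $\gamma$ lies in that neighborhood, and there $\gamma$ is a gradient flow line of the local arrival time function $u$. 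Applying Theorem~\ref{thm:AT-neck} to $u$ then gives that the limit point is unique and that $\tfrac{\gamma'(t)}{\abs{\gamma'(t)}}$ converges, which is the assertion of the corollary.

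I expect the main obstacle to be the global-to-local passage rather than any computation: one must know \emph{a priori} that only spherical and neck singularities occur---this is exactly where the deep inputs \cite{BK23}, \cite{Br16}, \cite{CHH}, \cite{CCMS}, \cite{CCS}, \cite{SX1}, \cite{SX2} are needed---and one must use the mean convex neighborhood theorem of Choi-Haslhofer-Hershkovits-White \cite{CHHW} (cf. \cite{CHH}) to upgrade the level-set flow near $x_0$ to an honest smooth mean convex flow of hypersurfaces, so that \eqref{arv-eq} holds, $u$ is well defined (even if merely $C^{1,1}$), and the hypotheses of Theorem~\ref{thm:AT-neck} are literally met. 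This last point is what lets the argument survive fattening of the global level-set flow and sign changes of the mean curvature. Once the local mean convex model is in place the corollary is immediate from Theorem~\ref{thm:AT-neck}, with the two-sphere and generic cases differing only in the cited classification of singularities.
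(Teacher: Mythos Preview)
Your proposal is correct and follows essentially the same route as the paper: classify the singularities as spherical or neck using \cite{BK23}, \cite{Br16}, \cite{W95}, \cite{CHH} in the two-sphere case and \cite{CCMS, CCS, SX1, SX2, BK23} in the generic case, invoke the mean convex neighborhood theorem \cite{CHHW, CHH} to obtain a local mean convex model near the limit point, and then apply the neck-singularity result. The paper packages the middle step into a separate intermediate statement (Theorem~\ref{thm:AT-general}) for outer flows, together with a weak notion of flow line via the generalized mean curvature vector of the supporting Brakke flow, but the content is identical to your direct reduction to Theorem~\ref{thm:AT-neck}.

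One small point to tighten: you assert that ``the tail of $\gamma$ lies in that neighborhood'' before you have uniqueness of the limit point, which is part of the conclusion. The paper avoids this circularity by noting that the proof of Theorem~\ref{thm-L} (the \L ojasiewicz part of Theorem~\ref{thm:AT-neck}) only uses the \emph{local} structure of the singular set near $p$ together with the general fact that the limit set $K$ of $\gamma$ is compact and connected; both are available once the mean convex neighborhood is in hand, so the argument runs without first knowing the tail is trapped. You should phrase the reduction the same way rather than presupposing convergence.
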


As we will see later, solutions to the arrival time equation \eqref{arv-eq} may not be $C^2,$ let alone analytic.
Without mean convexity, solutions to the level set equations are even known to be only Lipschitz continuous.
Thus, the results above support the principle ``that solutions of degenerate equations behave as though they are analytic even when they are not'' proposed by Colding--Minicozzi \cite{CM19}.

\subsection{Previous results and proof ideas}
We mention some earlier works.
The method of using level sets to study geometric flows started from Osher--Sethian \cite{OS88}, Evans--Spruck \cite{ES}, Chen--Giga--Goto \cite{CGG}, and Ilmanen \cite{I94}.
In the case of mean convex mean curvature flows, Evans--Spruck further showed that level set flows can be given by functions with a separation of variables, leading to the arrival time equation \eqref{arv-eq}.
They proved that there is always a Lipschitz viscosity weak solution to this degenerate elliptic partial differential equation by approximating it using solutions to perturbed elliptic equations.

The optimal regularity of solutions to \eqref{arv-eq} has been a subtle issue for a long time.
When the singularity is spherical, based on the works of Huisken \cite{H93}, Sesum \cite{S08}, and Strehlke \cite{S20}, we know that the optimal regularity is $C^2$ when the dimension of flows is at least two.\footnote{For the curve case, the solutions to \eqref{arv-eq} are known to be at least $C^3$ based on the result of Kohn--Serfaty \cite{KS06}.
}
This turns out to be false when the singularity is not spherical.
In fact, Ilmanen \cite{I92} proved that even for a typical dumbbell example constructed by Grayson \cite{G89}, its arrival time function is not $C^2$ at the neck singularity.
In \cite{CM16, CM18}, Colding--Minicozzi improved the result and proved that solutions to \eqref{arv-eq} are always $C^{1,1}$ and twice differentiable.
That is, for an arrival time function $u,$ its Hessian exists everywhere, though it may not be continuous.
Moreover, at a cylindrical singularity $p$ modeled on $S^{n-k}\pr{\sqrt{2(n-k)}}\times \bb R^k$ of a mean convex flow $M_t,$ its arrival time function is $C^2$ at $p$ if and only if the singular set near the point is a $k$-dimensional $C^1$ submanifold (cf. Theorem~\ref{thm:C2} and \cite{G24}).
In both spherical and cylindrical cases, Sun--Xue \cite{SX4, SX3} showed that these low-regularity phenomena are generic. 
There are only few examples with $C^2$ regularity, for instance, the  $O(k+1)\times O(n-k+1)$ symmetric marriage ring $S^k\times S^{n-k}.$

For a $C^2$ arrival time function, in \cite{CM19}, Colding--Minicozzi proved that its gradient flow lines satisfy the Arnold--Thom conjecture.
In this delicate work, they first proved an optimal \L ojasiewicz inequality for a $C^2$ arrival time function near its singularity.
This is already a non-trivial result, implying the \L ojasiewicz theorem about the uniqueness of limits points for gradient flow lines.
Based on this, they reduced the Arnold--Thom conjecture to a sharp convergence rate for rescaled mean curvature flows converging to a cylinder.
Obtaining such a sharp convergence rate is technical, requiring a careful analysis of the kernel of the stability operator (cf. \cite{CM182}).

In general, solutions without $C^2$ regularity near cylindrical singularities are more prevalent. Therefore, we need to pursue new methods. 
We mention the proof ideas and how our results are related to previous results.
The first subtle point is that when the arrival time function $u$ is not $C^2,$ a priori, we don't even know the uniqueness of limit points, that is, the \L ojasiewicz theorem, as there is no \L ojasiewicz inequality for $u.$
In fact, if the uniqueness is not known, it does not make sense to formulate Thom's original conjecture.
We obtain this uniqueness result by applying the structure theorems for singularities established by Colding--Minicozzi \cite{CM162, CM18}.
This method uses special properties of one-dimensional curves and does not extend to singularities modeled on cylinders with more than one Euclidean factor.

Our theorems about the Arnold--Thom conjecture are based on two important ingredients. 
The first one is the regularity  of arrival time functions \cite{CM18, CM19}.
Given a mean convex flow $M_t$ having a neck singularity $p,$ to resolve the Arnold--Thom conjecture, we need to deal with the case when the arrival time function $u$ is not $C^2$ near $p.$
In this case, although there is no optimal regularity, using the structural result of singular sets \cite{CM162}, Colding--Minicozzi \cite{CM18} were able to show that the function $u$ still behaves nicely when approached away from the kernel of the Hessian of $u,$ that is, the axis of the tangent cylinder.
Thus, if a gradient flow line approaches a neck singularity transversely, one would expect the asymptotic behavior to be still nice.
This will be related to finer study near a neck singularity, which will be addressed next.

The second ingredient is the thorough understanding of the flow behavior near neck singularities.
By the works of Brendle--Choi \cite{BC19, BC21}, Angenent--Daskalopoulos--Sesum \cite{ADS20} and Choi--Haslhofer--Hershkovits--White \cite{CHHW}, the canonical neighborhoods near a neck singularity are fully classified:
along a gradient flow line, 
a limit flow can only be the shrinking cylinder or the translating bowl soliton constructed in \cite{AW} (cf. \cite{W03, W11, H15}) up to rotations and translations.
If limit flows are all shrinking cylinders along a gradient curve, then we can argue that the gradient flow line approaches the neck singularity transversely, so the situation is reduced to the work of Colding--Minicozzi.
Thus, the task becomes classifying possible limit flows along a fixed gradient flow line.
What we finally obtain is a uniqueness result of limit flows.
That is, along a fixed gradient flow line, if one limit flow is a shrinking cylinder, then all the others must be.
Once we have this uniqueness result, we can further prove the corresponding uniqueness when the limit flows are translating bowl solitons.
In this case, the flow approaches the singularity slowly and non-transversely, so we have to deal with it separately.

We remark that there are many examples of neck singularities at which their arrival time functions are not $C^2.$
As we mention, Sun--Xue \cite{SX4} proved that this is a generic phenomenon (cf.~\cite{SS}), and there are many explicit constructions \cite{AV, AAG}.
There are also works about infinite dimensional versions of Thom's conjecture, see \cite{CH23}.

We organize the paper as follows.
Section \ref{sec:prelim} provides a preliminary introduction to mean convex mean curvature flows.
In Section \ref{sec:AT-neck}, we first show the uniqueness of limit points for a gradient flow line with a neck limit point,
proving the \L ojasiewicz theorem.
We then prove the uniqueness of limit flows along such a gradient flow line and use it to prove the Arnold--Thom conjecture for neck singularities, which implies Theorem~\ref{thm:main}. 
In Section \ref{sec:gen-cyl}, we deal with general cylindrical singularities that are non-degenerate and prove Theorem~\ref{thm:nondeg-cyl}.
Section \ref{sec:general-flow} concludes with the results about general mean curvature flows with neck singularities, including flows starting from two-spheres and generic closed hypersurfaces.

\subsection*{\bf Acknowledgment}
The authors are grateful to Prof. Bill Minicozzi for several insightful comments.
They would like to thank Ao Sun and Zhihan Wang for helpful conversations and thank the anonymous referees for  constructive comments and suggestions, which have helped improve this paper.
During the project, Lee was partially supported by NSF Grant DMS 2005345.


\section{\bf Preliminary facts for mean curvature flows}
\label{sec:prelim}

In this section, we collect and prove some facts about mean convex mean curvature flows.

\subsection{Mean convex flows}
In the work of Chen--Giga--Goto \cite{CGG} and Evans--Spruck \cite{ES}, they showed the existence and uniqueness of viscosity solutions to the level set flow equation
\begin{align}\label{LSF-eq}
	\bd_t v = |\n v|\, {\rm div}\pr{\frac{\n v}{|\n v|}}
\end{align}
in a compact set in $\bb R^{n+1}.$
When a given hypersurface $M$ is mean convex, Evans--Spruck \cite{ES} further showed that the solution to \eqref{LSF-eq} can be given by a separation of variables, that is, of the form $v(x,t) = u(x) - t.$
Thus, the new function $u$ in this context satisfies the arrival time equation~\eqref{arv-eq}, and the mean convex level set flow $M_t$ starting from $M$ can be given by the level sets of $u,$ which is defined on the smooth closed mean convex domain $\Omega$ with $\bd\Omega = M.$
If we let $t=-T$ be the starting time, since $M_{-T}=M$ is closed and the flow is in a Euclidean space, all singularities occur in finite time.
Thus, the arrival time function $u$ constructed above has a finite supremum.
That is, there exists $T_{\rm large}>0$ such that $M_t=\emptyset$ for all $t\ge T_{\rm large}.$
Thus, for a closed mean convex level set flow, we will always assume the time interval to be $[-T,\infty).$

Closed mean convex level set flows have been studied extensively, cf. \cite{ES, HS99, W00, W03, W15, CM16, HK1}.
Following \cite{CM16}, we summarize some important properties of them in the following theorem.

\begin{thm}[\cite{ES, W00, W15, CM16}] \label{thm:mcMCF-u}
	Let $M_t$ ($t\in[-T,\infty)$) be a mean convex level set flow starting from a smooth closed domain $\Omega$ with $\bd\Omega=M_{-T}.$\\
	(1) There exists a unique $C^{1,1}$ viscosity solution $u\colon \Omega\to\bb R$ to the arrival time equation \eqref{arv-eq} on $\Omega$ with $u^{-1}(-T)=M_{-T}.$\\
	(2) The flow is non-fattening (i.e., the level sets have no interior), each level set is the boundary of a compact mean convex set, and $u$ is defined everywhere on the interior of $\Omega.$\\
	(3) Each tangent flow at a singularity is a multiplicity one generalized cylinder.\\
	(4) On the regular set for the flow, the function $u$ is smooth and $H_{M_t}= \frac 1{|\n u|}.$
\end{thm}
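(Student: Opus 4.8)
The plan is to assemble the four assertions from the cited works, recalling the main constructions rather than reproving everything. For part (1), I would begin from the level set flow equation \eqref{LSF-eq}, for which Chen-Giga-Goto and Evans-Spruck provide a unique viscosity solution $v(x,t)$ on $\Omega\times[-T,\infty)$ with $v(\cdot,-T)$ a continuous function whose zero set is $M_{-T}$. The key observation of Evans-Spruck in the mean convex setting is that $v$ is strictly monotone in $t$ on the region eventually swept by the flow, so one may invert it and define $u(x)$ to be the unique time at which the moving level set reaches $x$; a formal computation then shows $u$ solves \eqref{arv-eq} in the viscosity sense, and uniqueness of $u$ follows from the comparison principle for \eqref{arv-eq}. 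For the $C^{1,1}$ regularity I would invoke elliptic regularization: approximate \eqref{arv-eq} by the uniformly elliptic equations
\[
-1 = \sqrt{\abs{\n u_\epsilon}^2+\epsilon^2}\;{\rm div}\pr{\frac{\n u_\epsilon}{\sqrt{\abs{\n u_\epsilon}^2+\epsilon^2}}},
\]
which admit smooth solutions by standard theory, establish $\epsilon$-independent gradient and Hessian bounds following Colding-Minicozzi \cite{CM16}, and pass to the limit so that $u=\lim_{\epsilon\to 0}u_\epsilon\in C^{1,1}$.

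For parts (2) and (3), the engine is the preservation of mean convexity, due to White \cite{W00,W15}: $H\ge 0$ is preserved along the flow, and $\set{H>0}$ is open and dense in the regular set. Consequently the superlevel sets $\set{u>t}$ form a nested family of compact mean convex domains, so the flow does not fatten, its level sets have empty interior, and each bounds a compact mean convex set. For part (3), any tangent flow at a singularity is a self-shrinker arising as a limit of mean convex flows, hence itself satisfies $H\ge 0$; Huisken's classification of mean convex self-shrinkers forces it to be a generalized cylinder $S^{n-k}(\sqrt{2(n-k)})\times\bb R^k$, while White's local Brakke-type regularity theorem pins the multiplicity to one.

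Part (4) is a direct computation on the regular set. Where $\n u\ne 0$, the implicit function theorem shows $M_t=\set{u=t}$ is a smooth hypersurface depending smoothly on $t$, with outer unit normal $\N=-\n u/\abs{\n u}$, so that the enclosed domains $\set{u>t}$ shrink as $t$ increases. Differentiating $u\equiv t$ along $\bd_t x=\h=-H\N$ gives $\n u\cdot\h=1$, i.e. $H\abs{\n u}=1$, which is the stated formula; the smoothness of $u$ near $x$ is then a standard bootstrap of \eqref{arv-eq}, which is uniformly elliptic there.

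The one genuinely deep input is the $C^{1,1}$ bound in part (1): the naive elliptic estimates for $u_\epsilon$ degenerate exactly where $\n u_\epsilon\to 0$, that is, near the singular set of the flow, so the $\epsilon$-uniform Hessian bound cannot come from linear theory and instead requires the delicate geometric argument of Colding-Minicozzi exploiting mean convexity of the level sets and the structure of the singular set. By contrast, the remaining ingredients are either soft, like the comparison principle and the implicit function theorem, or direct citations to the classification and regularity theorems of Huisken and White.
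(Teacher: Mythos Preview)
The paper does not supply its own proof of this theorem; it is stated purely as a summary of results from the cited references \cite{ES, W00, W15, CM16}, so there is no argument to compare against. Your sketch is a reasonable outline of what those references contain, with one caveat worth flagging: the $C^{1,1}$ regularity in \cite{CM16} is \emph{not} obtained by proving $\epsilon$-uniform Hessian bounds on the elliptic-regularized approximations $u_\epsilon$ and passing to the limit. Rather, Colding--Minicozzi work directly with the limiting arrival time $u$, using that the tangent flow at every singular point is a unique multiplicity-one generalized cylinder (their earlier uniqueness result \cite{CM15}) to pin down the Hessian of $u$ at singular points and show it is bounded. The elliptic regularization of Evans--Spruck yields only Lipschitz regularity; the jump to $C^{1,1}$ genuinely requires the geometric structure in parts (2)--(3) as input, which is consistent with your closing remark that the Hessian bound is the deep step, but the mechanism is different from what you describe.
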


A (generalized) cylinder is one of the shrinkers $S^{n-k}\pr{\sqrt{2(n-k)}}\times \bb R^{k}$
for $k=0,\cdots,n.$
The definition of tangent flows is given in the following.
We will also use the notion of limit flows, which encode more information of a singularity.

\begin{definition}\label{def:lim-flow}
	Let $M_t\sbst\bb R^N$ be a mean curvature flow with a singularity at $(x,t)\in \bb R^N\times \bb R.$
	Given $\lambda>0$ and $(x_0, t_0)\in \bb R^N\times\bb R,$ we can translate and parabolically rescale the flow, producing a new mean curvature flow 
	\begin{align*}
	M^{(x_0, t_0,\lambda)}_t:=\lambda \pr{M_{\lambda^{-2}t + t_0} - x_0}.
	\end{align*} 
	(1) (Limit flow) If for some sequences $\lambda_i\to\infty $ and $(x_i, t_i)\to (x,t),$ the sequence of flows $M^{(x_i, t_i, \lambda_i)}_t$ (weakly) converges to a flow $M_t^\infty,$ then such a limit is called a limit flow at $(x,t).$\\
    (2) (Special limit flow) In the convergence above, if $t_i<t$ for all $i,$ then such a limit $M_t^\infty$ is called a special limit flow at $(x,t).$\\
	(3) (Tangent flow) In the convergence above, if $(x_i, t_i)=(x,t)$ for all $i,$ then such a limit $M_t^\infty$ is called a tangent flow at $(x,t).$
\end{definition}

We sometimes use the notation $\C_k$ to denote the space of all cylinders with a $k$-dimensional Euclidean factor.\footnote{Note that the same notations were used in \cite{CM15, CM162}, but in \cite{CM15}, $k$ means the spherical dimension.
Here, as in \cite{CM162}, Euclidean factors matter more when analyzing singularity structures.}
That is, we let
\begin{align*}
    \C_k:=\set{
    \text{all rotations and translations of }
    S^{n-k}\pr{\sqrt{2(n-k)}}\times \bb R^k
    \text{ in }\bb R^{n+1}
    },
\end{align*}
and $\C_k^0$ be the set of those cylinders in $\C_k$ centered at the origin (i.e., those whose axes pass through the origin).
A cylinder in $\C_k$ is sometimes called a $k$-cylinder.
These will be used in particular when we do not want to specify the direction of concerned cylinders.

Cylindrical singularities are the only possible singularities along a mean convex flow.
As mentioned in Theorem \ref{thm:mcMCF-u}, the arrival time function at a cylindrical singularity may not be $C^2.$
In~\cite{CM18}, this regularity property is nicely characterized by the local behavior of the singular set of the flow.
The localized version was addressed by Guo~\cite{G24}.
We record them in the following theorem.

\begin{thm}[\cite{CM18, G24}]
\label{thm:C2}
    Let $M_t$ be a mean convex level set flow starting from a smooth closed hypersurface.
    If $u$ is its arrival time function, $\mathcal S$ is its singular set, and $p$ is a singularity modeled on a $k$-cylinder, then $u$ is $C^2$ near $p$ if and only if there exists $\delta>0$ such that $\mathcal S\cap B_\delta(p)$ is a $k$-dimensional $C^1$ embedded submanifold.
\end{thm}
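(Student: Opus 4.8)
The plan is to reduce the statement to a continuity question for $\Hess u$ near $p$. Two preliminary facts are used throughout. First, on a neighborhood of $p$ one has $\mathcal S=\set{x:\n u(x)=0}$: on the regular set $\abs{\n u}=1/H_{M_t}>0$ by Theorem~\ref{thm:mcMCF-u}(4), while $\abs{\n u}\to 0$ along any sequence approaching $\mathcal S$ since $H\to\infty$, and $\n u$ is continuous as $u\in C^{1,1}$. Second, by the differentiability theory of \cite{CM16,CM18}, at a singularity modeled on $S^{n-k}\pr{\sqrt{2(n-k)}}\times\bb R^{k}$ the Hessian exists and equals $-\tfrac1{n-k}\Pi_{V^\perp}$, where $V$ is the axis direction of the model and $\Pi_{V^\perp}$ is orthogonal projection onto $V^\perp$; in particular $\ker\Hess u=V$ there, $\Hess u$ is negative definite on $V^\perp$, and $\tr\Hess u=-(n+1-k)/(n-k)$, which is strictly monotone in $k$.

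For the direction ``$u\in C^2$ near $p$'' $\Rightarrow$ ``$\mathcal S\cap B_\delta(p)$ is a $C^1$ $k$-manifold'': if $\Hess u$ is continuous near $p$, then since $\tr\Hess u$ is continuous and strictly monotone in the cylinder type, every singular point sufficiently near $p$ is again a $k$-cylinder point and its axis $V_q=\ker\Hess u(q)$ varies continuously in $q$. Choosing coordinates with $V_p=\bb R^{k}\times\set{0}$, the $C^1$ map $x\mapsto\Pi_{V_p^\perp}\n u(x)$ has invertible $V_p^\perp$-derivative at $p$, so by the implicit function theorem $N:=\set{\Pi_{V_p^\perp}\n u=0}$ is a $C^1$ $k$-submanifold near $p$ and $\mathcal S\cap B_\delta(p)=\set{\n u=0}\cap B_\delta(p)\sbst N$. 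For the reverse inclusion, suppose $q'\in N$ is near $p$ with $q'\notin\mathcal S$; then $\n u(q')\neq 0$ and $\n u(q')\in V_p$, so the unit normal of the level set $M_{u(q')}$ at $q'$ lies in $V_p$, whereas since the tangent flow at $(p,u(p))$ is a $k$-cylinder with axis $V_p$ (Theorem~\ref{thm:mcMCF-u}(3), \cite{CM15}), the level sets through points near $p$ at times near $u(p)$ are nearly cylindrical with axis near $V_p$ and hence have normals nearly perpendicular to $V_p$; a blow-up of the flow at $(p,u(p))$ turns this into a contradiction. So $\mathcal S\cap B_{\delta'}(p)=N\cap B_{\delta'}(p)$.

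For the converse, assume $N:=\mathcal S\cap B_\delta(p)$ is a $C^1$ $k$-manifold. One first checks that every point of $N$ is a $k$-cylinder point with axis $T_qN$: points modeled on $\C_j$ with $j<k$ cannot accumulate at $p$ by upper semicontinuity of the Gaussian density and strict monotonicity of the densities $\Theta(\C_j)$ in $j$, the absence of nearby points with $j>k$ follows from the structure theory of the singular set \cite{CM162}, and $T_qN$ must then be the axis of the tangent flow at $q$, both being $k$-planes. The crux is then to establish the \emph{uniform} asymptotic expansion $\Hess u(x)\to -\tfrac1{n-k}\Pi_{(T_qN)^\perp}$ as $x\to q$, uniformly for $q\in N\cap\overline{B_{\delta'}(p)}$. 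Granting this, $\Hess u$ is continuous on the regular set (where $u$ is smooth), has the stated limit at and near each $q\in N$, and that limit varies continuously in $q$ precisely because $q\mapsto T_qN$ does --- i.e.\ because $N$ is $C^1$; hence $\Hess u$ is continuous near $p$, so $u\in C^2$ there.

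The main obstacle is this uniform asymptotic expansion, which is the technical core of the regularity theory of Colding--Minicozzi \cite{CM18}, localized to $B_\delta(p)$ by Guo \cite{G24}. It rests on an \emph{effective} rate of convergence of the rescaled mean curvature flow to the shrinking $k$-cylinder at each $q\in N$ --- via a \L ojasiewicz--Simon inequality for the Gaussian functional at the cylinder and an analysis of the rescaled flow equation --- made uniform over the compact family $N\cap\overline{B_{\delta'}(p)}$ by a compactness argument, and then transferred to $C^0$ control of $\Hess u$ using that the level sets of $u$ are the time-slices of the flow; all of this must be carried out with only $C^{1,1}$ a priori control on $u$ and with level sets that need not be smooth near $\mathcal S$. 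The identity $\mathcal S=\set{\n u=0}$, the implicit function theorem step, the density monotonicity, and the passage from flow convergence to Hessian control on the regular set are, by contrast, comparatively routine.
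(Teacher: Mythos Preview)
The paper does not give a proof of this statement: Theorem~\ref{thm:C2} is recorded as a result of Colding--Minicozzi \cite{CM18} (with the localized version due to Guo \cite{G24}) and is simply cited, not reproved. So there is no ``paper's own proof'' to compare your proposal against.

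That said, your sketch tracks the structure of the cited arguments reasonably well, and you correctly identify the hard direction (manifold $\Rightarrow C^2$) and its core input --- the uniform-in-$q$ rate of convergence of the rescaled flow to the cylinder, which is exactly what \cite{CM18,G24} supply. Two comments on the easier direction. First, your argument that $N\supset\mathcal S$ is immediate, but the reverse inclusion $N\subset\mathcal S$ is more delicate than your blow-up sketch suggests: a point $q'\in N\setminus\mathcal S$ near $p$ need not lie in a fixed-radius ball after the parabolic rescaling that realizes the tangent flow, so you need the convergence to the cylinder on arbitrarily large compact sets (which you have from the tangent-flow definition) together with a comparison between $|q'-p|$ and $\sqrt{|u(q')-u(p)|}$ to place $q'$ at a controlled location on the limit; this is where the normal-direction contradiction is actually extracted. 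Second, your exclusion of nearby $j$-cylinder points with $j>k$ via ``the structure theory of the singular set \cite{CM162}'' is circular in spirit here, since that structure theory is what underlies the theorem; the cleaner route (and the one actually used) is again density upper-semicontinuity together with the strict ordering $\lambda(\C_0)>\lambda(\C_1)>\cdots$ from Lemma~\ref{lem:ent}, which rules out $j<k$, combined with the $k$-dimensional $C^1$ hypothesis on $\mathcal S\cap B_\delta(p)$ itself to rule out $j>k$ (a $j$-cylinder point with $j>k$ would force the singular set locally to contain a $j$-dimensional piece by \cite{CM162}, contradicting the assumed dimension).
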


Next, we recall some definitions from \cite{CHH, CHHW}.
We use $B_r(x)$ to denote the open ball centered at $x\in\bb R^{n+1}$ with radius $r,$ and sometimes we write $B_r:=B_r(0)$ when the center is the origin.
We also write $H(x_0,t_0):=H_{M_{t_0}}(x_0)$ when $x_0\in M_{t_0}.$

\begin{definition}\label{def:sing}
    Let $M_t$ be a mean curvature flow, or more generally, a level set flow.
    \\
    (1) (Neck singularity) A singularity $(x_0, t_0)$ of the flow is called a cylindrical singularity if the tangent flow at $(x_0, t_0)$ is a multiplicity one $k$-cylinder.
    When $k=1,$ it is called a neck singularity.\\
    (2) (Neck cylindricality) For $\varepsilon>0,$ we say $(x_0, t_0)$ is $\varepsilon$-cylindrical if the rescaled flow $M^{\pr{x_0, t_0, H(x_0,t_0)}}_t$ is $\varepsilon$-close to a shrinking cylinder $S^{n-1}\pr{\sqrt{-2(n-1)t}}\times \bb R$ in $C^{\lceil 1/\varepsilon \rceil}\pr{B_{1/\varepsilon}\times [-\varepsilon^{-1},-\varepsilon]}$ after a parabolic rescaling, a translation in space and time, and a rotation.\footnote{This means that it can be written as the graph of a function over a cylinder in the region with $C^{\lceil 1/\varepsilon \rceil}$-norm at most $\varepsilon.$
    } 
\end{definition}

Cylindrical tangent flows are unique by \cite{CM15}.
That is, if a tangent flow at a singularity is given by one generalized cylinder, then any other tangent flow at the singularity is given by the same cylinder (with the same axis).
Thus, the definition above makes sense.

\subsection{Neck singularities}

Near a neck singularity, possible canonical neighborhoods are known \cite{BC19, ADS20, BC21, CHH, CHHW}.
They are shrinking spheres, shrinking cylinders, translating bowl solitons, or ancient ovals.
It turns out that any limit flow along a flow line will not be an ancient oval or a shrinking sphere if the flow line converges to a neck singularity.

\begin{lemma}\label{lem:no-bowl}
    Let $M_t$ be a mean convex mean curvature flow, $u$ be its arrival time function, and $\gamma(t)$ be a gradient flow line of $u$ converging to a singular point $p$ at time $0,$ that is, $u(p)=0.$
	Suppose $\gamma$ is parametrized by the time of the flow.
    If there are sequences $t_i\to 0$ and $\lambda_i\to\infty$ such that the sequence of rescaled flows 
	$M^{t_i}_t
    := M^{\pr{\gamma(t_i), t_i, \lambda_i}}_t
    $
	converges to an ancient oval or a shrinking sphere, then the limit point of the flow line is a spherical singularity.
\end{lemma}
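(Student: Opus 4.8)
The plan is as follows. We want to show: if along the gradient flow line $\gamma$ some rescaling sequence $M^{t_i}_t$ converges to an ancient oval or a shrinking round sphere, then the limit point $p$ of $\gamma$ is in fact a spherical singularity of $M_t$ (i.e., the tangent flow at $p$ is a multiplicity one shrinking sphere, $\mathcal C_0$).

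Let me think about what the hypotheses give us. The points $\gamma(t_i)$ lie on $M_{t_i}$ (since $\gamma$ is a gradient flow line of $u$ parametrized by the time of the flow, so $\gamma(t_i)$ lies on the level set $u^{-1}(t_i) = M_{t_i}$). As $t_i \to 0$, we have $\gamma(t_i) \to p$ since $\gamma$ converges to $p$. So $(\gamma(t_i), t_i) \to (p, 0)$ with $t_i < 0$ (the flow hits the singularity at time $0$, coming from below). This means the limit of $M^{(\gamma(t_i), t_i, \lambda_i)}_t$ — which is assumed to be an ancient oval or shrinking sphere — is a *special limit flow* at $(p, 0)$ in the sense of Definition~\ref{def:lim-flow}(2), provided $\lambda_i \to \infty$.

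Now the key structural input: for a mean convex mean curvature flow, the tangent flow at *any* singularity is a multiplicity one generalized cylinder $S^{n-k}(\sqrt{2(n-k)}) \times \mathbb R^k$ (Theorem~\ref{thm:mcMCF-u}(3)). Suppose for contradiction that $p$ is a cylindrical singularity with $k \geq 1$, i.e., the tangent flow at $p$ has a nontrivial Euclidean factor. I would like to derive a contradiction with the existence of a spherical/oval special limit flow.

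Here is the argument I'd use. A shrinking sphere and an ancient oval are both *compact* ancient flows — each time slice is compact, and in particular the flow becomes extinct at a single point. I want to play this compactness against the structure near $p$. The cleanest route uses the dimension/geometry of the singular set or a mean-value / monotonicity comparison. Specifically:

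**Step 1.** Use Theorem~\ref{thm:mcMCF-u}(3) and uniqueness of cylindrical tangent flows (\cite{CM15}, quoted after Definition~\ref{def:sing}) to conclude that the tangent flow at $p$ is some fixed $\mathcal C_k^0$. Assume $k \geq 1$ toward a contradiction.

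**Step 2.** Observe that a special limit flow at $(p,0)$ is itself an ancient weak (level set / Brakke) flow that arises as a limit of the same mean convex flow, hence it is a mean convex (in fact convex, by the convexity estimates of White \cite{W00, W03} and Huisken--Sinestrari, or by Haslhofer--Kleiner) ancient flow. The Gaussian density of any such special limit flow at its own singular point, by monotonicity and the upper semicontinuity of Gaussian density, is at most the Gaussian density $\Theta(p)$ of $M_t$ at $p$, which equals $\lambda(S^{n-k}\times\mathbb R^k)$.

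**Step 3.** But the Gaussian density (entropy) of a shrinking round $S^n$ is $\lambda(S^n)$, and $\lambda(S^n) > \lambda(S^{n-k} \times \mathbb R^k)$ for $k \geq 1$ by Stone's computation of $\lambda(S^m)$ and the multiplicativity $\lambda(S^{n-k}\times\mathbb R^k) = \lambda(S^{n-k})$, together with the monotonicity $\lambda(S^j)$ decreasing in $j$. An ancient oval is asymptotic to a round shrinking sphere (it has a spherical tangent flow at extinction, \cite{W03, ADS20}), so its entropy is also $\lambda(S^n)$. Either way the special limit flow has entropy $\lambda(S^n) > \lambda(S^{n-k}\times\mathbb R^k) = \Theta(p)$, contradicting Step 2. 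Hence $k = 0$, i.e., $p$ is a spherical singularity.

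I expect the **main obstacle** to be Step 2: making precise that a special limit flow at $(p,0)$ inherits the entropy bound $\Theta(p)$ of the ambient flow at $p$. One has to be careful because the rescaling centers $\gamma(t_i)$ drift (they are not equal to $p$), and the rescaled flows are taken around moving space-time points with $\lambda_i \to \infty$. The right tool is Huisken's monotonicity formula together with the fact that, since $(\gamma(t_i), t_i) \to (p,0)$, for any fixed backward time the Gaussian-weighted area of $M^{t_i}_t$ centered at the origin converges (along a subsequence) to that of the limit, and is bounded above by $\Theta(p)$ by Brakke's clearing-out / monotonicity and upper semicontinuity of density ratios. An alternative, perhaps cleaner in this setting, is to bypass entropy entirely: since any special limit flow at a point of a mean convex flow is again obtained as a limit flow, and limit flows along a neck singularity are classified (\cite{BC19, BC21, ADS20, CHHW}) to be only shrinking cylinders or translating bowl solitons — neither of which is a sphere or an oval — the only way a spherical/oval special limit flow can appear is if $p$ is not a neck singularity; combined with the mean convex dichotomy this forces $p$ to be spherical. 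I would present the entropy argument as the primary proof and remark on this alternative.
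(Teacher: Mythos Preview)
Your entropy argument in Step~3 has the key inequality reversed, and this is fatal. By Stone's computation (Lemma~\ref{lem:ent}(2) in the paper), $\lambda(S^j)$ is strictly \emph{decreasing} in $j$, so for $k\ge 1$ one has
\[
\lambda(S^n) < \lambda(S^{n-k}) = \lambda\pr{S^{n-k}\times\bb R^k} = \Theta(p),
\]
not the other way around. Thus the entropy of a shrinking $S^n$ is \emph{below} the density at a $k$-cylindrical singularity, and there is no contradiction. For the ancient oval the situation is worse: its entropy is governed by its tangent flow at $-\infty$, which is a cylinder (e.g.\ $S^{n-1}\times\bb R$ for the ADS oval), not by its spherical tangent flow at extinction; so its entropy is $\lambda(S^{n-1})$, again $\le \lambda(S^{n-k})$ for $k\ge 1$. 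In short, compact limit flows have \emph{low} entropy, so an upper bound on entropy cannot rule them out. Your alternative route via the classification of limit flows at a neck is circular here: in the paper Lemma~\ref{lem:no-bowl} is exactly the ingredient used (together with \cite{HK1,CHHW}) to arrive at that classification in Proposition~\ref{prop:convergence}.

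The paper's proof is completely different and much more direct: it uses compactness of the limit rather than entropy. If the rescaled flows converge to an ancient oval or a shrinking sphere, then at some fixed negative time (say $t=-1$) the rescaled time slice $M^{t_i}_{-1}=\lambda_i\pr{M_{t_i-\lambda_i^{-2}}-\gamma(t_i)}$ is, for large $i$, $C^\infty$-close on all of $\bb R^{n+1}$ to a compact strictly convex hypersurface. Hence the connected component of the \emph{unrescaled} slice $M_{t_i-\lambda_i^{-2}}$ through $\gamma\pr{t_i-\lambda_i^{-2}}$ is itself closed and strictly convex. Huisken's theorem \cite{H84} then forces that component to shrink to a round point, so the singularity $p$ is spherical. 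The missing idea in your attempt is this passage from convergence to a compact limit back to compactness and strict convexity of an actual time slice of the original flow.
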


The arguments to get Lemma~\ref{lem:no-bowl} are based on the definition of limit flows and Huisken's sphere theorem for convex hypersurfaces.
The proof ideas or similar arguments were sketched or used before (cf. \cite{BC19, BC21, CHH21}).

\begin{proof}
[Proof of Lemma~\ref{lem:no-bowl}]
	Suppose the sequence $M^{i}_t$ converges to an ancient oval that shrinks to a round point at $t=0.$
	We look at the time slice when $t=-1.$
	The convergence implies that as $i\to\infty,$ the sequence of slices $M^{t_i}_{-1} = \lambda_i \pr{M_{t_i - \lambda_i^{-2}} - \gamma(t_i)}$   
	converges to the time slice of the oval at $t=-1,$ which is a closed and strictly convex hypersurface.
	Hence, when $i$ is sufficiently large, the connected component of $M^{t_i}_{-1}$ containing $\gamma\pr{t_i - \lambda_i^{-2}}$ is also closed and strictly convex. 
	(Here, we may take larger $i$ such that $\gamma(t_i)$ and $\gamma\pr{t_i - \lambda_i^{-2}}$ are close.)
	Undoing the rescaling means that the connected component of $M_{t_i - \lambda_i^{-2}}$ containing $\gamma\pr{t_i - \lambda_i^{-2}}$ is also closed and strictly convex.
	This proves the lemma by Huisken's theorem \cite{H84}.
    The same arguments also work for the case of shrinking spheres.
\end{proof}

\begin{remark}\label{rmk:gamma-para}
    We may use different parametrizations for different uses in this paper.
    The ordinary gradient flow equation for $u$ is $\gamma'(t) = \n u \pr{\gamma(t)}.$
    When we say $\gamma$ is parametrized by the time of the flow, we use the reparametrization such that
    \begin{align*}
        \gamma'(t)
        = \frac{\n u}{|\n u|^2}\pr{\gamma(t)}
        = \h_{M_t} \pr{\gamma(t)}.
    \end{align*}
    When we say $\gamma$ is parametrized by its arc length, we use the reparametrization such that
    \begin{align*}
        \gamma'(t)
        = \frac{\n u}{|\n u|}\pr{\gamma(t)}.
    \end{align*}
    Throughout the paper, we usually parametrize the flow line \textit{by the time of the flow} unless otherwise specified.
\end{remark}

We can now state the following convergence result for a sequence of flows obtained by rescaling at a sequence of points along a gradient flow line of an arrival time function.

\begin{prop}
\label{prop:convergence}
	Let $M_t$ be a mean convex mean curvature flow, $u$ be its arrival time function, and $\gamma(t)$ be a gradient flow line of $u$ converging to a singular point $p$ with $u(p)=0.$
	If $p$ is a neck singularity, then any limit flow along $\gamma$ is a shrinking cylinder or a translating bowl with its axis \textit{parallel} to that of the tangent flow.
	To be more specific, if $\gamma$ is parametrized by time, then given any $t_i\to 0,$ after passing to a subsequence, the sequence of flows
	$M^{t_i}_t
    := M^{\pr{\gamma(t_i), t_i, \lambda_i}}_t,
    $
	where $\lambda_i = H\pr{\gamma(t_i), t_i},$ converges to a non-collapsed ancient solution, which is either a shrinking cylinder or a translating bowl soliton with its axis parallel to that of the cylindrical tangent flow.
\end{prop}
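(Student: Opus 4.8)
The plan is to assemble this as a packaging of three known inputs, feeding into each the geometric features of a mean convex flow and of a gradient flow line converging to a neck singularity. First I would set up the rescaled flows $M^{t_i}_t = M^{(\gamma(t_i),t_i,\lambda_i)}_t$ with $\lambda_i = H(\gamma(t_i),t_i)$. The crucial preliminary observation is that $\lambda_i\to\infty$: since $\gamma(t)\to p$ with $p$ a singular point of the flow, $\gamma(t)$ approaches a critical point of $u$ (i.e.\ $|\nabla u|\to 0$), and by item (4) of Theorem~\ref{thm:mcMCF-u}, $H_{M_t}(\gamma(t)) = 1/|\nabla u(\gamma(t))|\to\infty$. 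So the hypotheses of the structural compactness theorems are in force. Here I would also note that the base points $\gamma(t_i)$ lie \emph{on} the flow $M_{t_i}$, so the rescaled flows are genuine rescalings of the flow around points of positive mean curvature, exactly the setting of the canonical neighborhood results.

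Next I would invoke the compactness and canonical-neighborhood structure for mean convex (non-collapsed) flows: by \cite[Theorem~1.12]{HK1} (global convergence to smooth ancient solutions) together with \cite[Theorem~9.1]{CHHW} (classification of the possible ancient limits into shrinking spheres, shrinking cylinders, translating bowl solitons, and ancient ovals; cf.\ \cite[Proposition~2.3]{SS}), after passing to a subsequence the $M^{t_i}_t$ converge smoothly to one of these four non-collapsed ancient solutions. To eliminate the shrinking sphere and the ancient oval I would apply Lemma~\ref{lem:no-bowl}: if the limit were a shrinking sphere or an ancient oval, then the limit point $p$ would be a spherical singularity, contradicting the hypothesis that $p$ is a neck ($1$-cylinder) singularity, since cylindrical tangent flows are unique (by \cite{CM15}, as recorded after Definition~\ref{def:sing}) and a neck tangent flow cannot coincide with a spherical one. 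This leaves only the shrinking cylinder and the translating bowl soliton.

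The remaining point is the direction of the axis: I claim the limit's axis is parallel to that of the tangent flow at $p$. The input here is again the uniqueness of cylindrical tangent flows \cite{CM15} together with the mean convex neighborhood / neck structure results \cite{BC19,ADS20,BC21,CHH,CHHW}. Concretely, for $i$ large the rescaled flow $M^{t_i}_t$ is, on a large parabolic neighborhood, $\varepsilon_i$-close to the limit ($\varepsilon_i\to 0$), and simultaneously the flow near $p$ at scales below $\lambda_i^{-1}$ is controlled by the cylindrical tangent flow with its fixed axis direction; matching the two descriptions on the overlapping region forces the neck direction of the limit (the $\mathbb R$ factor of the cylinder, or the axis of the bowl) to agree, up to the $\varepsilon_i$-error, with the tangent cylinder's axis, and passing $i\to\infty$ gives exact parallelism. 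This is the assertion that ``the axis is parallel to that of the tangent flow'' in the statement, and it is really a consequence of the stated equivalence between using $H(x,t)$ and $1/R(x,t)$ in \cite{CHHW} combined with tangent flow uniqueness.

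The step I expect to be the main obstacle is the last one: cleanly justifying the parallelism of axes. The compactness and the exclusion of spheres/ovals are essentially direct citations, but extracting the direction of the axis requires carefully comparing the canonical-neighborhood description of the flow at the scale $\lambda_i = H(\gamma(t_i),t_i)$ with the tangent-flow description at vanishing scales, and verifying that the overlap region is large enough (relative to $\varepsilon_i$) that the two cylindrical structures must share their $\mathbb R$-direction. I would handle this by choosing, for each $i$, an intermediate scale between the ``neck scale'' at $\gamma(t_i)$ and the tangent-flow scale at $p$, writing both the limit model and the tangent cylinder as graphs over a common cylinder on that intermediate region, and noting that a cylinder $S^{n-1}\times\mathbb R$ that is $\varepsilon$-close to another such cylinder on a ball of radius $\varepsilon^{-1}$ must have its axis within $O(\varepsilon)$ of the other's; letting $\varepsilon_i\to 0$ closes the argument.
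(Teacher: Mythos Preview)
Your proposal is correct and follows essentially the same route as the paper: the paper states that the proposition ``follows directly from Lemma~\ref{lem:no-bowl}, \cite[Theorem~1.12]{HK1}, \cite[Theorem~9.1]{CHHW} (cf.\ \cite[Proposition~2.3]{SS}), and the fact that $H(\gamma(t),t)\to\infty$ if $\gamma(t)$ converges to a critical point of $u$,'' which is exactly the three-step assembly you describe. The one point of difference is emphasis: you flag axis parallelism as the ``main obstacle'' and sketch an intermediate-scale matching argument, whereas the paper absorbs this into the citation of \cite[Theorem~9.1]{CHHW} (with the cross-reference to \cite[Proposition~2.3]{SS}), where the canonical-neighborhood description near a neck singularity already carries the alignment of the neck/bowl axis with the tangent cylinder; so your extra work there is not needed, though it is not wrong.
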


Since this result is essential in the rest of the paper, we briefly outline how it follows from the previous results mentioned above.
One can also see Appendix~\ref{sec:CHHW+SS} for a complete proof.

\begin{proof}
[Proof outline of Proposition~\ref{prop:convergence}]

    The proposition follows directly from 
    Lemma~\ref{lem:no-bowl} and \cite[Proposition 9.1]{CHHW} (cf. \cite[Proposition~2.3]{SS}).
    Here, we use $H(x,t)$ to rescale the flow and get a blowup limit instead of using $1/R(x,t)$ in \cite{CHHW} since they are equivalent in the setting of mean convex flows based on the curvature estimates \cite[Theorem~1.8]{HK1}.

    For the fact about parallel axes, it was first mentioned at the end of the proof outline \cite[Section~1.7]{CHHW}.
    A proof can be found in the proof of \cite[Proposition~2.3(iv)]{SS}. 
    This improvement is based on the fact that in the definition of the cylindrical scale in \cite{SS}, 
    the information of axes is kept.
    
    The proof in \cite{SS} essentially provides what we outline above.
    To be self-contained, a complete proof is given in Appendix~\ref{sec:CHHW+SS} for interested readers.
\end{proof}

A subtle point in Proposition~\ref{prop:convergence} is that we assume the convergence of $\gamma,$ which is stronger than the assumption of just having a limit point.
This will not be an issue since later we will first prove the \L ojasiewicz theorem for such a flow line (Theorem~\ref{thm-L}), and hence having a limit point implies that the flow line converges.

We record a simple fact about the existence of flow lines for the two concerned ancient solutions.

\begin{lemma}
    \label{lem:flow-line-existence}
    Let $\Sigma_t$ be a translating bowl soliton or a shrinking cylinder in $\bb R^{n+1}$, which is an ancient mean curvature flow.
    Then given any $\ovl t < 0$ and $ p\in \Sigma_{\ovl t},$ we can find $\gamma\colon (-\infty, \ovl t]\to \bb R^{n+1}$ such that it is a gradient flow line with $\gamma\pr{\ovl t} = p.$
\end{lemma}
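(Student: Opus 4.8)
The plan is to obtain $\gamma$ by solving the time-parametrized gradient flow line equation $\gamma'(t)=\tfrac{\n u}{|\n u|^2}(\gamma(t))$ of Remark~\ref{rmk:gamma-para} with initial condition $\gamma(\ovl t)=p$, where $u$ denotes the arrival time function of the ancient flow (the function determined by $u^{-1}(t)=\Sigma_t$), and then to show the solution extends to all of $(-\infty,\ovl t]$. Since $\Sigma_t$ is one of two explicit solutions, $u$ can be written down by hand. For the shrinking cylinder $S^{n-1}\pr{\sqrt{-2(n-1)t}}\times\bb R\sbst\bb R^{n+1}$, writing a point as $(x,y)$ with $x\in\bb R^n$, $y\in\bb R$, one has $u(x,y)=-\tfrac{|x|^2}{2(n-1)}$ on $U:=\{u<0\}=\{x\neq 0\}$, so $\n u=-\tfrac{x}{n-1}$ and $|\n u|=\tfrac{|x|}{n-1}$. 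For the translating bowl soliton, which moves with some constant speed $a>0$ along a direction we may take to be $e_{n+1}$ and which is an entire strictly convex graph $\{x_{n+1}=f(x')\}$ over $\bb R^n$, one has $\Sigma_t=\Sigma_0+at\,e_{n+1}$, hence $u(x)=\tfrac1a\pr{x_{n+1}-f(x')}$ on $U:=\bb R^{n+1}$, so $|\n u|=\tfrac1a\sqrt{1+|Df|^2}\geq\tfrac1a$, where $D$ is the gradient on $\bb R^n$.

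In either case $\tfrac{\n u}{|\n u|^2}$ is a smooth vector field on the open set $U$, so a unique solution $\gamma$ with $\gamma(\ovl t)=p$ exists on some maximal interval $(\alpha,\ovl t]$. Along it, $\tfrac{d}{dt}u(\gamma(t))=\n u\cdot\tfrac{\n u}{|\n u|^2}=1$ and $u\pr{\gamma(\ovl t)}=u(p)=\ovl t$ because $p\in\Sigma_{\ovl t}=u^{-1}(\ovl t)$; hence $u(\gamma(t))=t$, so $\gamma(t)\in u^{-1}(t)=\Sigma_t$ for all $t$, and $\gamma$ takes values in $\{u\le\ovl t\}$. On this set $|\n u|$ is bounded below — by $\tfrac1a$ for the bowl, and by $\tfrac{\sqrt{-2(n-1)\ovl t}}{n-1}$ for the cylinder, since there $|x|^2=-2(n-1)u\ge-2(n-1)\ovl t$ — so $|\gamma'|=|\n u|^{-1}\le C(\ovl t)$ along $\gamma$.

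Now the usual continuation argument closes the proof. If $\alpha>-\infty$, then $\gamma$ is Lipschitz on $(\alpha,\ovl t]$ and so has a limit $q:=\lim_{t\to\alpha^+}\gamma(t)$ with $u(q)=\alpha$; since $\alpha<\ovl t<0$ forces $q\in\{u<0\}=U$ in the cylinder case, while $U=\bb R^{n+1}$ in the bowl case, the point $q$ lies in the open set $U$. We may then solve the smooth autonomous ODE with $\gamma(\alpha)=q$ and extend $\gamma$ to the left of $\alpha$, contradicting maximality. Hence $\alpha=-\infty$, and $\gamma\colon(-\infty,\ovl t]\to\bb R^{n+1}$ is a gradient flow line with $\gamma(\ovl t)=p$, lying on $\Sigma_t$ at each time $t$.

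There is no serious obstacle here; the only point that needs a little care is checking that the arrival time function is globally smooth on the relevant region with gradient bounded below on $\{u\le\ovl t\}$, which is exactly what makes the continuation argument go through, and this is immediate from the explicit formulas above — using that the bowl soliton is an entire graph, and that for $t\le\ovl t<0$ the cylinder $\Sigma_t$ stays a definite distance from its axis. (For the bowl one could instead avoid $u$ entirely by passing to the frame translating with the soliton, in which MCF becomes the flow of a vector field tangent to the complete surface $\Sigma_0$ with bounded norm, hence complete; but the argument via $u$ is shorter and consistent with the rest of the paper.)
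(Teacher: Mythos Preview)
Your proof is correct. The paper does not actually prove this lemma --- it merely records it as ``a simple fact'' and moves on --- so your argument supplies the omitted details. Writing down the arrival time function explicitly in each case and running the standard continuation argument (bounded speed on $\{u\le\ovl t\}$, hence no finite-time blowup or escape from the domain of smoothness) is precisely the natural way to justify the statement, and is presumably what the authors had in mind.
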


When proving the uniqueness of limit flows along a gradient flow line, we will need a backward stability result of cylindricality for translating bowl solitons.

\begin{lemma}
\label{lem:backward-stability}
    Let $\Sigma_t$ be a translating bowl soliton and let $\gamma(t)$ be a gradient flow line.
    Given any $\varepsilon < \frac{1}{10}$, there exists $\delta \in (0, \varepsilon)$ and $\mathcal T<\infty$ such that if $(\gamma(0),0)$ is $2\delta$-cylindrical, then $(\gamma(t),t)$ is 
    $\frac{\varepsilon}{2}$-cylindrical for all $t\in \left[ - \mathcal T H^{-2} \pr{\gamma(0), 0}, 0\right]$ and $\frac{\delta}{2}$-cylindrical for all $t\in \left(-\infty, - \mathcal T H^{-2} \pr{\gamma(0), 0}\right].$
    Moreover, if $\gamma(0)$ is not the tip of the bowl soliton, then given any $\delta' > 0,$ there exists $\mathcal T' < \infty$ such that $(\gamma(t),t)$ is $\delta'$-cylindrical for all 
    $t\in \left(-\infty, - \mathcal T' H^{-2} \pr{\gamma(0), 0}\right].$
\end{lemma}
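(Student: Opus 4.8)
The plan is to use the explicit structure of the translating bowl soliton $\Sigma_t$ together with the backward uniqueness/classification of limit flows near necks. The bowl soliton has a single tip; away from the tip, at a fixed time slice, its ends are graphical over a cylinder and become more and more cylindrical as one moves away from the tip (indeed, the bowl is asymptotic to a paraboloid, and at large distance from the axis each portion looks like a large piece of a round cylinder whose radius, after parabolic rescaling by $H$, converges to the shrinking-cylinder radius $\sqrt{-2(n-1)t}$). The gradient flow line $\gamma$ of the arrival time function, parametrized by the time of the flow, satisfies $\gamma'(t)=\h_{\Sigma_t}(\gamma(t))$ and moves monotonically in space; as $t\to -\infty$ it travels out toward the cylindrical end (if $\gamma(0)$ is not the tip, it is already on an end, and even if it is the tip, the point $\gamma(t)$ for $t<0$ has moved off the tip along the translation direction). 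In all cases, $H(\gamma(t),t)$ and the position of $\gamma(t)$ degenerate as $t\to-\infty$ in a way that forces the rescaled flow $\Sigma^{(\gamma(t),t,H(\gamma(t),t))}_s$ to converge, locally smoothly, to the round shrinking cylinder $S^{n-1}(\sqrt{-2(n-1)s})\times\bb R$.

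The first step is to prove the \emph{qualitative} statement: for any $\delta'>0$ there is $\mathcal T'<\infty$ with $(\gamma(t),t)$ being $\delta'$-cylindrical for $t\le -\mathcal T' H^{-2}(\gamma(0),0)$, \emph{assuming} $\gamma(0)$ is not the tip. This follows from the asymptotics of the bowl: if not, there is a sequence $t_j\to-\infty$ along which $(\gamma(t_j),t_j)$ fails to be $\delta'$-cylindrical, but compactness of the space of limit flows along a mean convex flow (Proposition~\ref{prop:convergence}) plus the fact that the bowl has no neck-type spherical collapse means any subsequential limit must be a shrinking cylinder or a bowl; a bowl limit is excluded here because $\gamma(t_j)$ runs off to infinity along the cylindrical end, so $H(\gamma(t_j),t_j)\to 0$ at a rate that makes the rescaled tip escape to spatial infinity, leaving only the shrinking-cylinder limit — contradiction. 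Since $H^{-2}(\gamma(0),0)$ is a fixed scale, one can express the threshold time in these units.

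Next, for the main assertion, fix $\varepsilon<\tfrac1{10}$. I would choose $\delta$ small enough (to be pinned down below) and run a continuity/parabolic-regularity argument: the hypothesis that $(\gamma(0),0)$ is $2\delta$-cylindrical means the rescaled flow at $\gamma(0)$ is $2\delta$-close to the shrinking cylinder on a large parabolic region; by interior estimates for mean curvature flow (graphical smooth estimates once one is, say, $\tfrac1{10}$-close to a cylinder, as in the canonical-neighborhood theory of \cite{CHHW}), this closeness propagates backward: there is a universal $\mathcal T<\infty$ so that on $[-\mathcal T H^{-2}(\gamma(0),0),0]$ the point $(\gamma(t),t)$ stays $\tfrac\varepsilon2$-cylindrical, with the loss of closeness from $2\delta$ to $\tfrac\varepsilon2$ controlled by choosing $\delta$ small relative to $\varepsilon$ and $\mathcal T$. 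For $t\le -\mathcal T H^{-2}(\gamma(0),0)$ I would combine two facts: (i) by the $\mathcal T$-window step, $(\gamma(-\mathcal T H^{-2}(\gamma(0),0)),\cdot)$ is $\tfrac\varepsilon2$-cylindrical, so in particular $\gamma$ at that time is on the genuinely cylindrical part of the bowl and not at the tip; (ii) the qualitative backward statement then kicks in with $\delta'=\tfrac\delta2$, giving $\tfrac\delta2$-cylindricality for all earlier times after a further universal wait, which can be absorbed into a possibly larger $\mathcal T$. Matching $\mathcal T$ in (i) with the $\mathcal T'$ coming from (ii) (with $\delta'=\delta/2$) gives a single $\mathcal T$ that works, since all the intervening scales are comparable to $H^{-2}(\gamma(0),0)$ by the bowl's homogeneity under its own translation.

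The main obstacle is the quantitative bookkeeping in the second step: one needs that $2\delta$-closeness at scale $H^{-2}(\gamma(0),0)$ really does propagate backward for a \emph{time length of order $\mathcal T H^{-2}(\gamma(0),0)$ with loss only $\varepsilon/2$}, and that the constant $\mathcal T$ can be taken independent of $\gamma(0)$. This is where one leans hardest on the structure of the bowl soliton — its self-similarity under translation means that the pair (scale $H^{-1}(\gamma(0),0)$, base point $\gamma(0)$) captures the geometry up to a bounded ambiguity, so "universal $\mathcal T$" is legitimate — combined with parabolic interior estimates to upgrade $C^0$-closeness to $C^{\lceil 1/\varepsilon\rceil}$-closeness on slightly smaller parabolic cylinders. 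The choice $\varepsilon<\tfrac1{10}$ is exactly what guarantees one is in the smooth graphical regime throughout, so no genuinely new estimate is needed beyond \cite{CHHW} and the bowl asymptotics; the work is in chaining the estimates so the output $\delta$ and $\mathcal T$ have the stated dependence.
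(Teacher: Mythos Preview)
The paper does not prove this lemma; it only remarks that a general backward-improvement result appears as \cite[Proposition~4.2]{GH} and that the special structure of the bowl yields the sharper statement recorded here. So there is no proof to compare against, and your outline must stand on its own.

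Your overall picture is right: pass to the moving frame of the bowl, where the flow line $\gamma(t)-te_0\in\Sigma_0$ moves monotonically away from the tip as $t$ decreases, and use that the bowl becomes arbitrarily cylindrical at large height. Two corrections and one real gap:

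\emph{Corrections.} (1) The tip flow line does \emph{not} move off the tip: if $\gamma(0)$ is the tip then $\gamma(t)$ is the tip of $\Sigma_t$ for all $t$, since $\h=e_0$ there. This does not affect the lemma (the $2\delta$-cylindrical hypothesis excludes the tip), but it is a misreading of the bowl geometry. (2) Proposition~\ref{prop:convergence} concerns mean convex flows converging to a singular point, not limit flows along a flow line on the bowl itself; the backward cylindricality on the bowl should come directly from its asymptotics, not from that proposition.

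\emph{Gap.} The heart of the lemma is the uniformity of $\mathcal T$ over all $\gamma(0)$ that are $2\delta$-cylindrical, and your appeal to ``self-similarity under translation'' does not establish it: the bowl is translation-invariant in time but not scale-invariant, and the set of admissible $\gamma(0)$ is noncompact. The correct mechanism is a compactness argument in the moving frame. By rotational symmetry and time-translation invariance, cylindricality at $p\in\Sigma_0$ is a continuous function $c(h)$ of height alone, with $c(h)\to0$ as $h\to\infty$; hence $\{h:c(h)\ge\delta/2\}$ is compact. For starting heights $h_0$ beyond this set no waiting is needed; for $h_0$ in the compact range $\{c\in[\delta/2,2\delta]\}$, the time (in units of $H^{-2}(h_0)$) for the backward flow line to exit that range is a continuous function of $h_0$, hence bounded. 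This gives the uniform $\mathcal T$. Once $\delta$ is chosen so that $\{c\le 2\delta\}$ lies beyond $\sup\{h:c(h)\ge\varepsilon/2\}$, the $\varepsilon/2$-assertion on $[-\mathcal T H^{-2},0]$ is automatic since height only increases going backward---no ``propagation-with-loss'' parabolic-regularity step is needed.
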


A general version of the backward improvement was done by Gianniotis--Haslhofer in {\cite[Proposition 4.2]{GH}}, where they dealt with all ancient non-collapsed mean curvature flows.
Here, we only need the result when the model is a bowl soliton or a cylinder, so we can get a stronger improvement statement.

We end this section by recording a convergence result of flow lines when looking at limit flows along $\gamma.$
This lemma follows from a general statement in Appendix~\ref{app:A} (Lemma~\ref{lem:flow-line-conv-general}).

\begin{lemma}\label{lem:flow-line-conv}
    Let $\gamma(t)$ be a gradient flow line of a mean curvature flow $M_t$
    and we write 
    $H(t) 
    := H\pr{\gamma(t),t}
    = H_t\pr{\gamma(t)}.$
    Given a sequence $t_i\to 0,$ if the sequence of flows
    $M^{t_i}_t
    := M^{\pr{\gamma(t_i), t_i, H(t_i)}}_t
    $
    converges to an ancient solution $M^\infty_t$ which is a translating bowl soliton or a shrinking cylinder, then the rescaled flow line
    \begin{align*}
        \gamma^{t_i}(t) :=
        H(t_i) \pr{\gamma\pr{H(t_i)^{-2} t + t_i} - \gamma(t_i)}
    \end{align*}
    converges to a flow line $\gamma^\infty(t)$ of $M^\infty_t$ ($t\in(-\infty, 0]$) smoothly on any compact time interval in $(-\infty,0].$
\end{lemma}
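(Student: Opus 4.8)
\textbf{Proof proposal for Lemma~\ref{lem:flow-line-conv}.}

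The plan is to reduce this to the general convergence lemma for flow lines stated in the appendix (Lemma~\ref{lem:flow-line-conv-general}), and to verify that the hypotheses of that lemma hold in the present situation. The hypothesis we are given is exactly the smooth (subsequential) convergence of the rescaled flows $M^{t_i}_t\to M^\infty_t$ on compact subsets of $\bb R^{n+1}\times(-\infty,0]$, where $M^\infty_t$ is a translating bowl or a shrinking cylinder. First I would record that, by construction, $\gamma^{t_i}(0)=0$ lies on $M^{t_i}_0$ for every $i$, and $0\in M^\infty_0$ (for the bowl this is automatic after the normalization used in Proposition~\ref{prop:convergence}, and for the shrinking cylinder one checks that the limit point $0$ lies on the $t=0$ slice, interpreted in the weak/Brakke sense; in fact for our purposes it is cleaner to pick a reference time $\bar t<0$ and note $\gamma^{t_i}(\bar t)\to $ a point on $M^\infty_{\bar t}$ along a further subsequence, since $\gamma^{t_i}$ satisfies the rescaled gradient flow ODE $(\gamma^{t_i})'(t)=\h_{M^{t_i}_t}(\gamma^{t_i}(t))$, and the velocity fields $\h_{M^{t_i}_t}$ converge locally smoothly to $\h_{M^\infty_t}$). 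Then the abstract lemma produces a flow line $\gamma^\infty$ of $M^\infty_t$ and the local smooth convergence $\gamma^{t_i}\to\gamma^\infty$.

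The key steps, in order, are: (i) pass to the subsequence along which $M^{t_i}_t\to M^\infty_t$ smoothly on compacta (given) and along which a chosen basepoint $\gamma^{t_i}(\bar t)$ converges in $\bb R^{n+1}$ — this requires an a priori bound showing $\gamma^{t_i}(\bar t)$ stays in a fixed compact set, which follows because $\gamma^{t_i}(\bar t)\in M^{t_i}_{\bar t}$ and the slices $M^{t_i}_{\bar t}$ converge, hence lie (near the relevant connected component through the origin) in a bounded region for $i$ large; (ii) invoke the smooth convergence to see that the velocity fields $\h_{M^{t_i}_t}$, which are smooth on the regular parts and uniformly bounded with uniformly bounded derivatives on compact space-time sets away from the singular time, converge in $C^\infty_{\rm loc}$ to $\h_{M^\infty_t}$ — here one uses that along a mean convex flow the points $\gamma^{t_i}(t)$ for $t<0$ stay in the regular set and that $H\pr{\gamma(t),t}$ is comparable to $H(t_i)$ after rescaling for $t$ in a fixed compact interval, by the noncollapsing/Harnack-type estimates already used to prove Proposition~\ref{prop:convergence}; (iii) apply the ODE convergence from Lemma~\ref{lem:flow-line-conv-general} to conclude. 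Finally I would note that since $M^\infty_t$ is ancient (bowl or cylinder), Lemma~\ref{lem:flow-line-existence} guarantees the limiting flow line $\gamma^\infty$ extends to all of $(-\infty,0]$, matching the stated conclusion.

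The main obstacle I anticipate is step (i)–(ii): ensuring that the rescaled flow lines $\gamma^{t_i}$ do not escape to infinity and that the velocity fields enjoy uniform local smooth bounds \emph{up to a neighborhood of the curve}, including control of the behavior near $t=0$. The subtlety is that $\gamma^{t_i}(0)=0$ sits on the slice $M^{t_i}_0$ at the rescaled singular time, where curvature blows up in the original flow but is normalized to be $O(1)$ at the basepoint after rescaling by $H(t_i)$; one must check that this normalization propagates to a genuine uniform curvature bound in a fixed parabolic neighborhood of $\gamma^{t_i}$, which is where noncollapsedness of mean convex flows and Theorem~\ref{thm:mcMCF-u}(4) (relating $H$ to $1/|\n u|$) enter crucially. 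Once this uniform regularity is in hand, the passage to the limit and the identification of $\gamma^\infty$ as a flow line of $M^\infty_t$ is routine via Arzelà–Ascoli applied to the rescaled gradient-flow ODE.
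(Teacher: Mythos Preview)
Your proposal is correct and follows essentially the same route as the paper: the paper's proof of this lemma is simply to invoke the general convergence result in the appendix (Lemma~\ref{lem:flow-line-conv-general}), and you correctly identify the two hypotheses that need checking---that $\gamma^{t_i}(0)=0$ converges to a point on $M^\infty_0$, and that a flow line $\gamma^\infty$ through that point exists on all of $(-\infty,0]$, the latter supplied by Lemma~\ref{lem:flow-line-existence}. Your worries in the final paragraph are somewhat overcautious: because the rescaling normalizes $H=1$ at the spacetime origin, the limit (cylinder or bowl) is smooth at $t=0$, so the $C^\infty_{\rm loc}$ convergence extends up to $t=0$ and the uniform curvature bounds near $\gamma^{t_i}$ come for free from the convergence itself, exactly as in the proof of Lemma~\ref{lem:flow-line-conv-general}.
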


\section{\bf Mean convex flows with neck singularities}
\label{sec:AT-neck}

In this section, we fix a mean convex mean curvature flow $M_t$ ($t\in[-T, \infty)$)
and consider its arrival time function $u$ defined on $\Omega$ with $\bd\Omega = M_{-T}.$
We will then fix a gradient flow line $\gamma(t)$ and suppose that it has a neck singularity $(x_0, t_0)$ as a limit point and is parametrized by the time of the flow.
Without loss of generality, we may assume~$(x_0, t_0)=(0,0).$
All cylinders in this section are $1$-cylinders.

\subsection{\bf Uniqueness of limit points}
First, we prove the \L ojasiewicz theorem for $\gamma$.
This is based on the structure theorems of cylindrical singularities by Colding--Minicozzi \cite{CM162} (cf. \cite{G24}) and some classical results about limit points of $C^1$ curves.\footnote{In the theory of ordinary differential equations, the set of limit points of a gradient flow line is an interesting object. 
One can, for instance, see \cite[Section 3.2]{P01}, based on which the arguments about the connectedness in the proof are structured.}

\begin{thm}[\L ojasiewicz for neck singularities]
\label{thm-L}
    If $\gamma$ has a limit point which is a neck singularity, then this limit point is the unique limit point of $\gamma.$
\end{thm}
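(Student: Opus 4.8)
The plan is to show that the set $L$ of limit points of $\gamma$, which is a compact connected subset of $\Si$, must reduce to a single point by exploiting two facts: (i) near a neck singularity the singular set $\Si$ is one-dimensional in a strong sense, so $L$ is a connected subset of a (topologically) small set; (ii) limit points of a $C^1$ gradient flow line cannot form a nontrivial continuum containing an ``interior'' point through which the curve would have to pass transversally.

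First I would recall that $L := \bigcap_{s} \overline{\gamma([s,0))}$ is nonempty (by hypothesis), compact, and connected, this last being the standard fact about $\omega$-limit sets of $C^1$ flow lines on bounded curves (e.g.\ \cite[Section 3.2]{P01}); moreover $L$ is invariant in the sense that $L\subseteq \Si$, since $|\n u|\to 0$ along $\gamma$ forces every limit point to be a critical point of $u$, i.e.\ a singular point of the flow. Next, invoke the structure theorem of Colding--Minicozzi \cite{CM162} (cf.\ \cite{G24}): since $\gamma$ has \emph{a} limit point $p$ that is a neck singularity (a $1$-cylinder singularity), the cylindrical tangent flow is unique at $p$ with a fixed axis, and there is $\delta>0$ so that $\Si\cap B_\delta(p)$ is contained in a single $C^1$ curve (the ``spine''), with the axis of the tangent cylinder at $p$ tangent to this curve. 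In particular every point of $\Si\cap B_\delta(p)$ is itself a neck singularity with axis varying continuously. Because $L$ is connected and $p\in L$, the portion of $L$ inside $B_\delta(p)$ is a connected subset of this $C^1$ arc; either $L\cap B_\delta(p)=\{p\}$, in which case by connectedness of $L$ and openness of $B_\delta(p)$ one concludes $L=\{p\}$ and we are done, or $L$ contains a nondegenerate subarc $J$ of the spine through $p$.

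The heart of the argument is to rule out the second alternative. Suppose $L$ contains a nondegenerate subarc $J$; pick $q$ in the relative interior of $J$ (so $q$ is not an endpoint of $J$ within $L$). The flow line $\gamma$ must return to every neighborhood of $q$ infinitely often as $t\to 0$. Near $q$ the arrival time function $u$, while possibly not $C^2$, is controlled by the Colding--Minicozzi analysis \cite{CM18}: away from the kernel of $\Hess u$ — i.e.\ in directions transverse to the spine — $u$ behaves nicely, and in fact $u$ is strictly monotone along the spine direction near $q$ (the spine is the level-set singular set along which $u$ increases), while in the transverse directions $\n u$ is comparable to the distance to the spine. Now parametrizing $\gamma$ by time (or arc length), $\gamma'$ points along $\n u/|\n u|$, which near $q$ makes a definite angle with the spine unless $\gamma$ is essentially tangent to the spine there; one shows that if $\gamma$ accumulates on the \emph{interior} point $q$ of a spine arc contained in $L$, then $\gamma$ must cross a transverse hyperplane through $q$ infinitely often in a fixed small ball, and each such crossing costs a definite amount of length — contradicting the finite-length conclusion, or more directly, contradicting that $\gamma$ cannot simultaneously approach $q$ from ``both sides'' along the spine since $u(\gamma(t))=t$ is strictly increasing and $u$ is strictly monotone along the spine. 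The cleanest packaging: along the spine $u$ is injective near $q$, so the set $L$ — being forced to be a single level behavior — can meet the spine in at most the single point compatible with the limiting value $u\equiv 0$ on $L$; hence $L\cap B_\delta(p)$ cannot be a nondegenerate arc, so $L\cap B_\delta(p)=\{p\}$ and therefore $L=\{p\}$.

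I expect the main obstacle to be making rigorous the transversality/monotonicity step without $C^2$ regularity of $u$: one has to extract from \cite{CM162, CM18} precisely the statements that (a) $\Si$ near a neck singularity is a $C^1$ curve with continuously varying axis, and (b) $u$ restricted to a transverse slice is controlled (e.g.\ the gradient is nondegenerate in the transverse directions) so that $\gamma$ cannot shadow the spine arbitrarily closely while also oscillating back to an interior point of $J$. An alternative, possibly cleaner route that avoids transversality estimates entirely: use only that $L$ is compact, connected, and contained in the $C^1$ spine, together with the fact that $u$ equals its limiting value (here $0$) on all of $L$ by continuity; since $u$ is \emph{strictly monotone} along the spine near any neck point (because the spine is where the mean-convex level sets pinch, and later level sets pinch at later spine points), a connected subset of the spine on which $u$ is constant must be a single point. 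This reduces everything to citing the monotonicity of $u$ along the singular curve, which follows from the mean convex structure and \cite{CM162}, and then $L=\{p\}$ is immediate, giving finite length and the \L ojasiewicz theorem.
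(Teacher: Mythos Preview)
Your setup matches the paper's: both arguments begin by establishing that the limit set $L$ is compact and connected, both invoke the structure theorem of \cite{CM162} to conclude that $L\cap \overline{B_\delta(p)}$ lies in a $C^1$ curve, and both aim for a contradiction if $L$ contains a nondegenerate subarc. The divergence is in how that contradiction is reached, and your route has a genuine gap.

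Your ``cleanest packaging'' relies on the assertion that $u$ is \emph{strictly monotone} along the spine near any neck point. This is false. The marriage ring (the $O(2)\times O(n)$-symmetric $S^1\times S^{n-1}$, mentioned explicitly in the paper) shrinks so that its singular set is an entire circle of simultaneous neck singularities, on which $u$ is constant. More generally, nothing in the mean convex structure forces distinct points on the singular curve to have distinct arrival times; the spine records \emph{where} the flow pinches, not \emph{when}, and symmetric examples pinch everywhere at once. So the step ``a connected subset of the spine on which $u$ is constant must be a single point'' does not go through. Your earlier transversality sketch is, as you acknowledge, not a proof either, and faces the same difficulty: in the marriage ring example there is no preferred direction along the spine that $\gamma$ must traverse.

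The paper's proof avoids this entirely by exploiting a lemma you did not use: Theorem~\ref{thm:C2} (the $C^2$ characterization of \cite{CM18, G24}). If $L$ contained a nondegenerate subarc, then near an interior point $q$ of that arc the singular set $\mathcal S$ would locally be a $C^1$ one-dimensional submanifold, which by Theorem~\ref{thm:C2} forces $u$ to be $C^2$ near $q$. Once $u$ is $C^2$, the \L ojasiewicz inequality of \cite[Theorem~2.1]{CM19} applies and gives $|L|=1$ directly, a contradiction. This is precisely what handles the marriage ring case: there $u$ \emph{is} $C^2$, and the uniqueness of limit points follows from the analytic-type inequality, not from any monotonicity along the spine. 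The missing idea in your proposal is this reduction to the $C^2$ case via the regularity characterization.
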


We remark that a possible alternative approach to proving the uniqueness of limit points is to appeal to the uniqueness of limit flows, which will be proved in Theorems~\ref{cylindrical-limit-flow} and~\ref{bowl-limit-flow}. 
See Remark~\ref{rmk:another-proof-of-L}. 
However, in contrast to the current version of Theorem \ref{bowl-limit-flow}, this alternative approach requires additional work to establish the uniqueness of the axes of bowl solitons, since the limit point may not be unique a priori.
Here, we instead opt for a direct proof, as the structure theory of singular sets developed by Colding--Minicozzi~\cite{CM162} is particularly effective in the presence of spherical and neck singularities. This strategy also takes advantage of the special structure of one-dimensional curves and avoids difficulties arising from non-uniqueness of limit points, thereby simplifying the preparation in Section~\ref{sec:unique-limit-flow} for the proof of the Arnold--Thom conjecture.

\begin{proof}
    We start with a general claim about the set of limit points of a gradient curve.

    \begin{claim}
        \label{claim:grad-curve}
        Let $K$ be the set of limit points of $\gamma(t)$ as $t\to 0.$
        That is,\footnote{One can also define $K$ by looking at the closure of the spacetime track of $\gamma$ intersected with the time-zero slice.
        With this definition, the closedness will follow directly.}
        \begin{align*}
            K:=\set{
            x\in\Omega:
            \lim_{i\to\infty}\gamma(t_i)=x
            \text{ for some }t_i\nearrow 0
            }.
        \end{align*}
    Then $K$ is a compact connected subset of $\Omega.$
    \end{claim}

    We prove the claim first.
    To show the compactness, it suffices to show that $K$ is closed since $K$ is a bounded region.
    The closedness then follows from a standard diagonal argument.
    In fact, if $x\in \ovl K,$ then we can find $x^j\in K$ with $x^j\to x$ as $j\to\infty.$
    For each $j,$ we can find $t^j_i\in [-T,0)$ such that $\gamma(t^j_i)\to x^j$ as $i\to\infty.$
    We may assume for each $j,$ $t^j_i$ is increasing.
    Then for each $j,$ we take $i(j)\in\bb N$ such that 
    $$d_{\rm euc}\pr{\gamma \pr{t^j_{i(j)}}, x^j}\le \frac 1j.$$
    The triangle inequality then implies $\gamma \pr{t^j_{i(j)}}\to x$ as $j\to\infty.$
    This proves that $K=\ovl K.$
    Thus, $K$ is closed and hence compact.

    Next, we show the connectedness.
    If $K$ were not connected, we could write $K=U\cup V$ where $U$ and $V$ are disjoint proper closed subsets of $K$ (and hence $\bb R^{n+1}).$
    Let $\delta > 0$ be the distance between $U$ and $V.$ 
    We can then find two sequences $t_i\to 0$ and $s_i\to 0$ such that 
    $$d_{\rm euc}\pr{\gamma(t_i), U}
    < \delta/2 
    < d_{\rm euc}\pr{\gamma(s_i), U}$$ 
    for all $i.$ 
    By the continuity of the function $d_{\rm euc}\pr{\gamma(\cdot), U},$ we are able to find $\tau_i\to 0$ such that $d_{\rm euc}\pr{\gamma(\tau_i), U}=\delta/2$ for all $i\in\bb N.$
    Passing to a subsequence, we get a limit $\lim\limits_{i\to\infty}\gamma(\tau_i)\in K,$ which cannot be in either $U$ or $V,$ a contradiction.
    Thus, $K$ is connected, and Claim \ref{claim:grad-curve} is proven.

    Now, we prove the conclusion by contradiction.
    Suppose $\# K>1$, i.e., limit points are not unique. 
    Let $p\in K$ be a neck singularity.
    The structure theorem in \cite{CM162} then implies that $K\cap \ovl B_\delta(p)$ consists of only spherical and neck singularities and is contained in a $C^1$ curve when $\delta>0$ is small.
    Since $K$ is a compact connected set, after shrinking $\delta$ if necessary, $K\cap \ovl B_\delta(p)$ is also compact and connected and hence itself forms a $C^1$ curve (instead of being a proper subset of it) if $\# \pr{K\cap\ovl B_\delta(p)}>1.$
    Since the arrival time function is $C^2$ near a spherical singularity by Huisken's result \cite{H93}, every point in $K\cap \ovl B_\delta(p)$ must be a neck singularity.
    If this $C^1$ curve had an interior point, the arrival time function would be $C^2$ near this point by Theorem~\ref{thm:C2}, and the \L ojasiewicz inequality \cite[Theorem~2.1]{CM19} implies the uniqueness of limit points, that is, $\# K = 1,$ a contradiction.
    Hence, $K\cap \ovl B_\delta(p)$ consists of one point,
    so $K$ only contains isolated points.
    Since $K$ is connected, the only possibility is $\# K=1.$
    This again contradicts the assumption that limit points are not unique. 
    Therefore, the conclusion follows.
\end{proof}

By Theorem~\ref{thm-L}, every gradient flow line having a neck singularity as a limit point is actually convergent to it.
This implies the first part of Theorem~\ref{thm:main}.
Thus, we can freely use Proposition~\ref{prop:convergence} in the rest of this section.

\subsection{Uniqueness of limit flows}
\label{sec:unique-limit-flow}

In the rest of this section, we only consider limit flows obtained from blowing up along a sequence of the form $(x_i, t_i) = \pr{\gamma(t_i), t_i}.$
Thus, for simplicity, given $t_0<0,$ we let
\begin{align*}
    M^{t_0}_t
	:= M^{\pr{\gamma\pr{t_0}, t_0, H\pr{t_0}}}_t
    = H(t_0) \pr{M_{H(t_0)^{-2} t + t_0} - \gamma(t_0)}
\end{align*}
be the rescaled flow we care about,
where $H(t_0) := H\pr{\gamma(t_0), t_0} = H_{t_0}\pr{\gamma(t_0)}.$
A limit flow obtained from such a rescaled sequence will be called a limit flow along $\gamma.$

We start to look at the two types of limit flows along $\gamma.$
By Proposition~\ref{prop:convergence}, they are either shrinking cylinders or translating bowl solitons.
As mentioned, a mean convex mean curvature flow $M_t$ ($t\in[-T, \infty)$) is fixed.
First, we show a backward improvement result for cylindricality.

\begin{lemma}
    \label{backward-cylindrical-propogation}
    For any $\varepsilon \in \pr{0,\frac{1}{10n}}$, there exist $\delta\in (0,\varepsilon)$ and $\tau < 0$ such that if $\pr{\gamma(\ovl{t}),\ovl{t}}$ is $\delta$-cylindrical 
    for some $\ovl{t} \in (\tau,0)$, then $(\gamma(t),t)$ is $\varepsilon$-cylindrical 
    for all  $t\in[\tau, \ovl{t}].$
\end{lemma}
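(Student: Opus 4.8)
The plan is to prove Lemma~\ref{backward-cylindrical-propogation} by a continuity (open-closed) argument on the set of times at which $(\gamma(t),t)$ stays $\varepsilon$-cylindrical, using Proposition~\ref{prop:convergence} as the compactness engine and Lemma~\ref{lem:backward-stability} together with Lemma~\ref{lem:flow-line-conv} to control the model limit flows. The rough idea is that whenever the rescaled flow at $\gamma(\ovl t)$ is very close to a cylinder, the only limit flows that can arise as we run backwards are shrinking cylinders or bowl solitons, and in either case backward cylindricality is (eventually) preserved; the real content is turning this qualitative statement into a uniform quantitative bound on a fixed time interval $[\tau,0)$.

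First I would argue by contradiction: suppose the statement fails for some $\varepsilon\in(0,\tfrac1{10n})$. Then for every $j$, taking $\delta=\delta_j\to 0$ and $\tau=\tau_j\to 0$, there is a flow line $\gamma_j$ (or the same $\gamma$, depending on the formulation — here it is the fixed $\gamma$, so we instead pick $\ovl t_j\to 0$ with $(\gamma(\ovl t_j),\ovl t_j)$ being $\delta_j$-cylindrical) and a first exit time $s_j\in[\tau_j,\ovl t_j]$ at which $(\gamma(s_j),s_j)$ is exactly $\varepsilon$-cylindrical but fails to be $\varepsilon$-cylindrical just before. Normalize by rescaling at $(\gamma(s_j),s_j)$ with factor $H(s_j)$: by Proposition~\ref{prop:convergence} the flows $M^{s_j}_t$ subconverge to a shrinking cylinder or a translating bowl soliton $\Sigma_t$, and by Lemma~\ref{lem:flow-line-conv} the rescaled flow lines converge to a gradient flow line of $\Sigma_t$. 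The key observation is that the rescaled time of $\ovl t_j$ relative to $s_j$, namely $H(s_j)^2(\ovl t_j - s_j)$, is nonnegative and the point $\gamma(\ovl t_j)$ (after rescaling) is $\delta_j$-cylindrical — so in the limit we obtain a point on $\Sigma_t$ at a nonnegative time offset that is "$0$-cylindrical", i.e.\ literally on an exact cylinder slice. If $\Sigma_t$ is itself a shrinking cylinder this is automatic; if it is a bowl soliton, a time slice containing a point that is exactly cylindrical forces that point to be "at infinity" on the bowl, which is the degenerate/limiting case and has to be handled by the second part of Lemma~\ref{lem:backward-stability} (the case when $\gamma(0)$ is not the tip). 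In either case, running Lemma~\ref{lem:backward-stability} backward from such a near-cylindrical configuration shows the limit flow line is $\tfrac{\varepsilon}{2}$-cylindrical on all of $(-\infty,0]$, contradicting that $(\gamma(s_j),s_j)$ — the limit of which sits at rescaled time $0$ — is only $\varepsilon$-cylindrical by the choice of exit time, provided $\varepsilon$-closeness is not achieved exactly in the limit; more precisely, smooth convergence upgrades the limiting $\tfrac{\varepsilon}{2}$-cylindricality to $\tfrac{3\varepsilon}{4}$-cylindricality (say) of $(\gamma(s_j),s_j)$ for large $j$, contradicting that $s_j$ is an exit time where it is exactly $\varepsilon$-cylindrical but not $\varepsilon$-cylindrical for times slightly less than $s_j$.

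The subtle point I would be careful about is the existence and definition of the "first exit time" $s_j$: one wants the largest $s\le\ovl t_j$ with $(\gamma(s),s)$ failing to be $\varepsilon$-cylindrical, or equivalently the infimum of times $t\le\ovl t_j$ such that $(\gamma(\cdot),\cdot)$ is $\varepsilon$-cylindrical on all of $[t,\ovl t_j]$; the set of $\varepsilon$-cylindrical times is relatively open by smoothness of the flow on its regular part (and $(\gamma(t),t)$ is regular for $t<0$), so this infimum is achieved at a point that is exactly $\varepsilon$-cylindrical. One also has to note $H(s_j)\to\infty$ since $\gamma(s_j)$ converges to the neck singularity, so the rescalings are genuine blow-ups and Proposition~\ref{prop:convergence} applies. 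Finally, I would choose $\delta_j$ and $\tau_j$ from Lemma~\ref{lem:backward-stability} (with $\varepsilon$ replaced by, say, $2\varepsilon$) so that the quantitative thresholds line up; the logical structure is that the contradiction sequence cannot exist once $\delta$ is small enough and $\tau$ close enough to $0$, which is exactly the assertion of the lemma.

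The main obstacle I expect is the bowl soliton case: a blow-up limit along $\gamma$ at the exit time $s_j$ could be a translating bowl rather than a cylinder, and then I need to rule out that the flow line on the limit bowl passes through a genuinely non-cylindrical region (near the tip) at rescaled time $0$. This is where Lemma~\ref{lem:backward-stability} does the heavy lifting — it says that if the flow line is $2\delta$-cylindrical at time $0$ on a bowl, it stays controlled backward — but one must check that the limiting configuration is indeed "$2\delta$-cylindrical at time $0$" in the appropriate sense, which comes from the fact that $\ovl t_j$ is at a nonnegative rescaled time offset and $(\gamma(\ovl t_j),\ovl t_j)$ is $\delta_j$-cylindrical with $\delta_j\to 0$, combined with forward monotonicity/continuity of the cylindricality scale along the bowl flow line. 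Tracking these nested $\varepsilon$'s and $\delta$'s carefully, and making sure the forward/backward directions are consistent with the time parametrization convention fixed in Remark~\ref{rmk:gamma-para}, is the part that requires genuine care rather than routine estimation.
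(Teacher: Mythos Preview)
Your approach --- rescale at the first exit time $s_j$ and use the $\delta_j$-cylindricality at the \emph{forward} rescaled time $T_j=H(s_j)^2(\bar t_j-s_j)$ to force the limit at time $0$ to be $2\delta$-cylindrical --- is genuinely different from the paper's, and as written it has a gap. Nothing rules out $T_j\to\infty$: Proposition~\ref{prop:convergence} and Lemma~\ref{lem:flow-line-conv} give convergence of the rescaled flows and flow lines only on compact subsets of $(-\infty,0]$, so when $T_j\to\infty$ the $\delta_j$-cylindricality at $\bar t_j$ is simply lost in the limit and you have no way to conclude the limit point at rescaled time $0$ is $2\delta$-cylindrical. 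Your fallback, ``forward monotonicity of the cylindricality scale along the bowl flow line,'' is a statement about the \emph{limit} bowl, not about the pre-limit flows, so it cannot be used to compare cylindricality at $s_j$ and $\bar t_j$ in the original flow; and to use it on the limit you would again need convergence at positive rescaled times. Without this, you only know the limit at time $0$ is $\le\varepsilon$-cylindrical, which is too weak an input for Lemma~\ref{lem:backward-stability} (whose hypothesis is $2\delta$-cylindricality with $\delta=\delta(\varepsilon)$).

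The paper avoids positive rescaled times entirely by rescaling at the $\delta$-cylindrical point $\bar t$ itself (with $\delta$ \emph{fixed}, chosen from Lemma~\ref{lem:backward-stability}). The limit at time $0$ is then $2\delta$-cylindrical, so Lemma~\ref{lem:backward-stability} applies directly on $[-\mathcal T,0]$ and, crucially, regenerates $\delta$-cylindricality at rescaled time $-\mathcal T$. Transferring back gives a single backward step: $(\gamma(t),t)$ is $\varepsilon$-cylindrical on $[\bar t-H(\bar t)^{-2}\mathcal T,\bar t]$ \emph{and} $\delta$-cylindrical at the new left endpoint. One then iterates $\bar t_{i+1}=\bar t_i-H(\bar t_i)^{-2}\mathcal T$, and the cylindrical geometry gives $\partial_t H>0$ along $\gamma$ on the controlled interval, so $H(\bar t_i)\le H(\bar t_1)$ and the step sizes are bounded below by $H(\bar t_1)^{-2}\mathcal T$; hence finitely many steps reach $\tau$. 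The iteration (and the self-improvement built into Lemma~\ref{lem:backward-stability}) is the idea your proposal is missing.
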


We remark that the constants $\delta$ and $\tau$ depend on $M_t,$ and we can apply the lemma to other mean convex flows.

\begin{proof}
Given $\varepsilon\in \pr{0,\frac{1}{10n}},$ we take $\delta\in (0,\varepsilon)$ and $\mathcal{T}>0$ to be the constants from Lemma~\ref{lem:backward-stability}.
We claim that there exists $\tau < 0$ such that if $\ovl t\in[\tau, 0)$ and $\pr{\gamma(\ovl{t}),\ovl{t}}$ is $\delta$-cylindrical, then $(\gamma(t), t)$ is
\begin{align}\label{first-claim}
    \varepsilon\text{-cylindrical for }
    t\in \left[\ovl{t}-H\pr{\ovl{t}}^{-2}\mathcal{T},\ovl{t}\right],
    \text{ and }
    \delta\text{-cylindrical for }
    t  = \ovl{t} - H(\ovl{t})^{-2}\mathcal{T}.
\end{align}
If we were not able to find such a $\tau < 0$, then we could find $t_i \to 0$ such that
$\pr{\gamma(t_i),t_i}$ is $\delta$-cylindrical but either $(\gamma(t),t)$ is not $\varepsilon$-cylindrical for some $t \in \left[t_i - H(t_i)^{-2} \mathcal{T}, t_i\right]$ 
or $(\gamma(t), t)$ is not $\delta$-cylindrical at $t = t_i - H(t_i)^{-2} \mathcal{T}.$
By Proposition~\ref{prop:convergence}, we can pass to a subsequence such that
\begin{align}\label{Convergence-1}
    M_t^{t_i}\rightarrow M^{\infty}_t
\end{align}
where $M_t^{\infty}$ is a translating bowl soliton or a shrinking cylinder. 
By the assumption, $(0,0)$ is $2\delta$-cylindrical in $M_t^\infty$.  
By Lemma~\ref{lem:flow-line-existence}, there is a gradient flow line $\gamma^{\infty}(t)$ of $M_t^\infty$ for $t \in (-\infty,0]$ with $\gamma^\infty(0) = 0,$ and Lemma~\ref{lem:flow-line-conv} implies that the rescaled flow line
\begin{align}\label{gamma-conv-i}
    H(t_i) \pr{\gamma(H(t_i)^{-2} t + t_i)-\gamma(t_i)}
    \to \gamma^\infty(t)
\end{align}
smoothly on any compact time interval in $(-\infty,0].$
By Lemma~\ref{lem:backward-stability}, $\pr{\gamma^\infty(t), t}$ is $\frac{\delta}2$-cylindrical at $t = -\mathcal{T}$ and is $\frac{\varepsilon}2$-cylindrical when $t \in [-\mathcal{T},0].$ 
Since $\gamma^\infty\pr{[-\mathcal T, 0]}$ is contained in a compact ball in $\bb R^{n+1},$ the convergence \eqref{Convergence-1} and \eqref{gamma-conv-i} implies that for all sufficiently large $i$, $(\gamma(t),t)$  is $\varepsilon$-cylindrical when $t \in \left[t_i -H(t_i)^{-2}\mathcal{T}, t_i\right]$ and is $\delta$-cylindrical at $t = t_i - H(t_i)^{-2}\mathcal{T}$. 
This leads to a contradiction, and the claim \eqref{first-claim} is proven.

We fix the constants $\tau,$ $\delta,$ and $\mathcal{T}$ from the claim above.  
Let $\ovl{t}_1 = \ovl{t}$. 
If $\tau \leq \ovl{t}_1 - H(\ovl{t}_1)^{-2} \mathcal{T}$,  
we pick $\ovl{t}_2 = \ovl{t}_1 - H(\ovl{t}_1)^{-2}\mathcal{T}$, 
otherwise we stop. 
We can repeat the process and pick 
$0  > \ovl{t}_1 > \ovl{t}_2 > \cdots$ 
where 
$\ovl{t}_{i+1} = \ovl{t}_i - H(\ovl{t}_i)^{-2} \mathcal{T}$ 
and we stop at $\ovl{t}_\ell$ if 
$\ovl{t}_\ell \geq \tau$ 
but 
$\ovl{t}_\ell - H\pr{\ovl{t}_\ell}^{-2} \mathcal T <\tau$.

By the claim \eqref{first-claim}, $(\gamma(t),t)$ is $\varepsilon$-cylindrical when $t \in \left[\ovl{t}_{i+1},\ovl{t}_i\right]$ and $(\gamma(\ovl{t}_i), \ovl{t}_i)$ is $\delta$-cylindrical.
Therefore,
\begin{align*}
    \partial_t H = \Delta H + |A|^2H \geq \left(\frac{1}{n-1}-C\varepsilon\right)H^3 > 0,
\end{align*}
noting that $\Delta H_{\rm cyl} = 0$ and $|A_{\rm cyl}|^2H_{\rm cyl} = \frac{1}{n-1}H_{\rm cyl}^3$ on an exact cylinder $S^{n-1}\times\mathbb{R}$. 
Consequently, $H(t) = H(\gamma(t),t)$ is strictly increasing on~$\left[\ovl{t}_i, \ovl{t}_1\right].$
This means that $\ovl{t}_j < \ovl{t}_1 - (j-1) H\pr{\ovl{t}}^{-2} \mathcal{T}.$
Therefore, the process must stop at $\ovl{t}_\ell$ for some finite $\ell$. 
Now, we use the claim again to conclude that $(\gamma(t),t)$ is $\varepsilon$-cylindrical when $t \in \left[\ovl{t}_l - H\pr{\ovl{t}_l}^{-2} \mathcal{T}, \ovl{t}\right]$. 
Since $\ovl{t}_l - H\pr{\ovl{t}_l}^{-2} \mathcal{T} \leq \tau,$ the assertion of the lemma follows.
\end{proof}

The claim in the proof asserts that when $t$ is sufficiently close to 0, being cylindrical at a point implies being cylindrical when going backwards along the gradient flow line for a period of time comparable to the square of the cylindrical radius. 
The upshot is that the cylindrical property actually improves when the flow goes back for a while. 
Therefore, in the proof, we are able to iterate the process and show that the cylindricality is preserved when going backwards along the flow line all the way up to a fixed negative time.
Using the backward improvement, we can show the uniqueness of cylindrical limit flows along the flow line $\gamma.$

\begin{thm}\label{cylindrical-limit-flow}
    If the sequence $M^{\td t_i}_t$ converges to  a shrinking cylinder along one sequence $\td{t}_i \rightarrow 0$, then $M^{t_i}_t$ converges to a shrinking cylinder along any sequence $t_i \rightarrow 0$. 
\end{thm}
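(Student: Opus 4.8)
The plan is to push cylindricality backward along $\gamma$ from times near the singularity using the backward improvement in Lemma~\ref{backward-cylindrical-propogation}, and then transfer it to an arbitrary blow-up sequence by a compactness argument.

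First fix any $\varepsilon\in\pr{0,\frac{1}{10n}}$ and let $\delta=\delta(\varepsilon)\in(0,\varepsilon)$ and $\tau=\tau(\varepsilon)<0$ be the constants from Lemma~\ref{backward-cylindrical-propogation}. By hypothesis $M^{\td t_i}_t$ converges to a shrinking cylinder; since this flow is, by definition, rescaled at $H\pr{\gamma(\td t_i),\td t_i}$ so that its mean curvature at the spacetime origin is exactly $1$, smooth convergence to a shrinking cylinder implies that $\pr{\gamma(\td t_i),\td t_i}$ is $\delta$-cylindrical once $i$ is large (the space-time translation in Definition~\ref{def:sing}(2) absorbs the shift of the singular time of the limit, and the normalization at the origin makes the parabolic rescaling harmless). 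Also $\td t_i\in(\tau,0)$ for large $i$. Applying Lemma~\ref{backward-cylindrical-propogation} to each such $\td t_i$ gives that $\pr{\gamma(t),t}$ is $\varepsilon$-cylindrical for all $t\in[\tau,\td t_i]$, and letting $i\to\infty$ yields
\begin{align}\label{eq:eps-cyl-all}
    \pr{\gamma(t),t}\text{ is }\varepsilon\text{-cylindrical for every }t\in[\tau(\varepsilon),0).
\end{align}
This holds for every $\varepsilon\in\pr{0,\frac{1}{10n}}$, each with its own $\tau(\varepsilon)$.

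Now let $t_i\to 0$ be arbitrary. By Proposition~\ref{prop:convergence}, after passing to a subsequence, $M^{t_i}_t$ converges, smoothly on compact subsets of spacetime, to an ancient flow $M^\infty_t$ that is a shrinking cylinder or a translating bowl soliton, with $0\in M^\infty_0$ and $H_{M^\infty_0}(0)=1$. Fix $\varepsilon\in\pr{0,\frac{1}{10n}}$; for $i$ large we have $t_i\in[\tau(\varepsilon),0)$, so by \eqref{eq:eps-cyl-all} the point $\pr{\gamma(t_i),t_i}$ is $\varepsilon$-cylindrical, i.e.\ after a rigid motion $g_i$ and a rescaling (bounded, thanks to the normalization $H=1$ at the origin) the flow $M^{t_i}_t$ is $\varepsilon$-close in $C^{\lceil 1/\varepsilon\rceil}$ to the standard shrinking cylinder on $B_{1/\varepsilon}\times[-\varepsilon^{-1},-\varepsilon]$. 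Because $0\in M^{t_i}_0$ the translational parts of the $g_i$ stay bounded, so after a further subsequence the $g_i$ and the rescalings converge, and passing to the limit shows that $M^\infty_t$ is, after a bounded rigid motion and rescaling, $\varepsilon$-close to the standard shrinking cylinder on $B_{1/\varepsilon}\times[-\varepsilon^{-1},-\varepsilon]$. Since this holds for every $\varepsilon\in\pr{0,\frac{1}{10n}}$, letting $\varepsilon\to 0$ and using that these approximating cylinders converge (by the same compactness) to a single shrinking cylinder $C$, we conclude $M^\infty_t=C$ on every compact region; hence $M^\infty_t$ is a shrinking cylinder. As every subsequential limit of $M^{t_i}_t$ is then a shrinking cylinder, $M^{t_i}_t$ converges to a shrinking cylinder, as claimed.

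Given Lemma~\ref{backward-cylindrical-propogation}, most of this is routine; the two points requiring care are the identification of ``$M^{\td t_i}_t\to$ shrinking cylinder'' with ``$\pr{\gamma(\td t_i),\td t_i}$ is $\delta$-cylindrical'' and the final limiting step, where one must check that the approximating rigid motions and rescalings remain in a compact set so that $\varepsilon$-closeness survives passing to the limit and then $\varepsilon\to 0$. I expect this last upgrade — from ``$\varepsilon$-close to a cylinder for every $\varepsilon$'' to ``equal to a cylinder'' — to be the main (mild) obstacle. One can sidestep the $\varepsilon\to 0$ argument altogether by instead fixing a single small $\varepsilon_0\in\pr{0,\frac{1}{10n}}$ for which no point of a translating bowl soliton is $\varepsilon_0$-cylindrical (a translating soliton cannot be $\varepsilon_0$-close to a round shrinking cylinder over the long time interval $[-\varepsilon_0^{-1},-\varepsilon_0]$) and using \eqref{eq:eps-cyl-all} with $\varepsilon=\varepsilon_0$ to rule out the bowl alternative for $M^\infty_t$ directly.
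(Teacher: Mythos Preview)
Your main argument coincides with the paper's proof: feed $\delta$-cylindricality at $\td t_i$ into Lemma~\ref{backward-cylindrical-propogation} to get $\varepsilon$-cylindricality on $[\tau(\varepsilon),0)$, pass this to any subsequential limit $M^\infty_t$ along $t_i\to 0$ (via Proposition~\ref{prop:convergence}), and let $\varepsilon\to 0$. The paper is terser about the compactness of the auxiliary rescalings and rigid motions you elaborate on, but the logical structure is identical.

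One correction, though: the alternative you sketch in the final paragraph does not work. Points on a translating bowl soliton far from the tip \emph{are} $\varepsilon_0$-cylindrical for arbitrarily small $\varepsilon_0$ --- this is precisely the content of the ``moreover'' clause of Lemma~\ref{lem:backward-stability}, which says that going backward along any non-tip flow line one eventually becomes $\delta'$-cylindrical for any prescribed $\delta'>0$. So there is no universal $\varepsilon_0$ at which the bowl fails to be cylindrical everywhere, and the $\varepsilon\to 0$ step cannot be sidestepped this way. (What is true is that the \emph{tip} of a bowl is never $\varepsilon_0$-cylindrical for small $\varepsilon_0$; this is exploited later in Lemma~\ref{converges-to-tip-of-bowl}, but only after one already knows the limit flow has its tip at the origin.)
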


We remark that this limit cylinder is different from the cylinder given by the tangent flow.
Recall that we translate the singularity such that the tangent flow is centered at the origin,
but the limit flow is not centered at the origin (since it passes through the origin regularly), though the two cylinders are parallel.
Once the uniqueness of limit tangents is obtained (Theorem~\ref{thm:AT-neck}), the limit tangent will tell us how these two cylinders are different from each other.

\begin{proof}
Take an arbitrary sequence $t_i \to 0.$ 
By Proposition~\ref{prop:convergence}, the sequence $M^{t_i}_t$ converges to a non-collapsed ancient solution $M^\infty_t.$
The theorem follows if we can show that $M_t^\infty$ is a shrinking cylinder.

For any $\varepsilon > 0$, we take the constants $\delta$ and  $\tau$ from Lemma~\ref{backward-cylindrical-propogation}. 
By the convergence, $\pr{\gamma\pr{\td{t}_i}, \td{t}_i}$ is $\delta$-cylindrical for all sufficiently large $i,$ and Lemma~\ref{backward-cylindrical-propogation} forces $\pr{\gamma(t),t}$ to be $\varepsilon$-cylindrical when $t\in \left[\tau, \td{t}_i\right).$
Letting $i\rightarrow \infty,$ we obtain that $\pr{\gamma(t),t}$ is $\varepsilon$-cylindrical when $t\in [\tau, 0).$
We can now conclude that $M^\infty_t$ is $\varepsilon$-cylindrical at $(0,0)$ for any $\varepsilon > 0.$
Sending $\varepsilon\to 0$ proves the assertion.
\end{proof}

Next, we deal with the case of translating bowl solitons.
We will be able to show that bowl-type limit flows along a gradient flow line are also unique, even with the same tip point.
First, we show that when a translating bowl soliton appears as a limit flow, the singularity must be the tip of the bowl.

\begin{lemma}\label{converges-to-tip-of-bowl}
     If the sequence $M^{t_i}_t$ converges to
     a translating bowl soliton along a sequence $t_i\rightarrow 0$, then the origin is the tip of the limit bowl soliton. 
     Moreover, the rescaled flow lines converge to the flow line of the tip of the limit bowl soliton, which moves at constant speed on a straight line and arrives the origin at time $0.$
\end{lemma}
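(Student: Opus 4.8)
The plan is to push the flow lines to the limit, force the origin to be the tip by a backward‑propagation argument, and then identify the limit flow line by ODE uniqueness. First I would apply Lemma~\ref{lem:flow-line-conv} to the given convergence $M^{t_i}_t\to M^\infty_t$ (with $M^\infty_t$ a translating bowl soliton): the rescaled flow lines $\gamma^{t_i}(t)=H(t_i)\pr{\gamma(H(t_i)^{-2}t+t_i)-\gamma(t_i)}$ converge in $\Cloc$ on $(-\infty,0]$ to a gradient flow line $\gamma^\infty$ of $M^\infty_t$, and evaluating at $t=0$ gives $\gamma^\infty(0)=0$; moreover, since $H_{M^{t_i}_0}(0)=1$ for every $i$ by the normalization, smooth convergence gives $H_{M^\infty_0}(0)=1$ as well. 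It then remains to prove that $0$ is the tip of $M^\infty_0$, since the rest of the statement will follow formally.

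For the tip claim I would argue by contradiction, assuming $0=\gamma^\infty(0)$ is not the tip of $M^\infty_0$. Fix $\varepsilon_0=\varepsilon_0(n)>0$ small enough that no translating bowl soliton is $\varepsilon_0$-close to a shrinking cylinder on $B_{\varepsilon_0^{-1}}\times[-\varepsilon_0^{-1},-\varepsilon_0]$ — there is a uniform such gap on large parabolic regions because a bowl soliton is strictly convex whereas a cylinder has a flat direction. Let $\delta_0=\delta(\varepsilon_0)$ and $\tau_0=\tau(\varepsilon_0)<0$ be the constants of Lemma~\ref{backward-cylindrical-propogation}. Applying the second part of Lemma~\ref{lem:backward-stability} to $\gamma^\infty$ with $\delta'=\delta_0/2$ produces a fixed time $t^*<0$ at which $\pr{\gamma^\infty(t^*),t^*}$ is $\tfrac{\delta_0}{2}$-cylindrical. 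By the convergence above, $\pr{\gamma^{t_i}(t^*),t^*}$ is then $\delta_0$-cylindrical in $M^{t_i}_t$ for all large $i$, and undoing the parabolic rescaling and space–time translation (which does not affect $\varepsilon$-cylindricality) shows that $\pr{\gamma(s_i),s_i}$ is $\delta_0$-cylindrical in $M_t$, where $s_i:=H(t_i)^{-2}t^*+t_i\to 0^-$.

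Feeding this into Lemma~\ref{backward-cylindrical-propogation}, $\pr{\gamma(t),t}$ is $\varepsilon_0$-cylindrical for all $t\in[\tau_0,s_i]$; letting $i\to\infty$ (so $s_i\to 0^-$) gives that $\pr{\gamma(t),t}$ is $\varepsilon_0$-cylindrical for every $t\in[\tau_0,0)$. In particular, for all large $i$ the rescaled flow $M^{t_i}_t$ is $\varepsilon_0$-close to a shrinking cylinder on $B_{\varepsilon_0^{-1}}\times[-\varepsilon_0^{-1},-\varepsilon_0]$, so in the limit $M^\infty_t$ — a translating bowl soliton — would be $\varepsilon_0$-close to a shrinking cylinder there, contradicting the choice of $\varepsilon_0$. (Alternatively, one can conclude from Theorem~\ref{cylindrical-limit-flow} that every limit flow along $\gamma$ is a shrinking cylinder, again contradicting the hypothesis.) Hence $0$ is the tip of $M^\infty_0$.

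Finally, to identify $\gamma^\infty$: writing $M^\infty_t=M^\infty_0+tv$ for the translation velocity $v$, the tip of $M^\infty_t$ is $\mathrm{tip}_0+tv$, so $\mathrm{tip}_0=0$ and the tip traces the straight line $t\mapsto tv$, which is a gradient flow line because $\mathbf H=v^\perp=v$ at the tip. Since $\gamma^\infty$ is a gradient flow line of $M^\infty$ with $\gamma^\infty(0)=0=\mathrm{tip}_0$, and the ODE $\beta'(t)=\mathbf H_{M^\infty_t}(\beta(t))$ has a smooth right-hand side, uniqueness forces $\gamma^\infty(t)=tv$ for $t\le 0$; combined with $\gamma^{t_i}\to\gamma^\infty$ from the first step, this gives the second assertion of the lemma, with $|v|=1$ since $H$ at the tip equals the translation speed and $H_{M^\infty_0}(0)=1$. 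I expect the crux to be the middle two paragraphs: correctly transferring the backward cylindricality of $\gamma^\infty$ — supplied by Lemma~\ref{lem:backward-stability} under the ``not the tip'' hypothesis — to the original flow line $\gamma$ through the convergence, and propagating it back to a fixed negative time with Lemma~\ref{backward-cylindrical-propogation} so that it collides with the bowl; the remaining steps are routine given the preliminary lemmas.
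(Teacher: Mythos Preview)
Your proposal follows essentially the same route as the paper: argue by contradiction, use the second part of Lemma~\ref{lem:backward-stability} on the limit flow line $\gamma^\infty$ (valid under the ``not the tip'' hypothesis) to produce a $\delta$-cylindrical point, transfer this to $\gamma$ via Lemma~\ref{lem:flow-line-conv}, and then propagate backward with Lemma~\ref{backward-cylindrical-propogation} to force $(\gamma(t),t)$ to be $\varepsilon$-cylindrical on a fixed interval $[\tau,0)$, contradicting the bowl hypothesis. Your identification of the limit flow line via ODE uniqueness at the end is correct and a bit more explicit than the paper's one-line remark.

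One caution: your claimed uniform gap $\varepsilon_0=\varepsilon_0(n)$ separating \emph{all} translating bowl solitons from shrinking cylinders does not exist. Bowls are asymptotically cylindrical: viewed from a point far along the wing (with the $H$-normalization you are using), a bowl is as close to a cylinder as you like on any prescribed parabolic region, so strict convexity alone does not give a uniform gap. What your argument actually needs is only that the \emph{specific} limit bowl $M^\infty_t$, passing through $0$ with $H_{M^\infty_0}(0)=1$, is not $\varepsilon_0$-cylindrical at $(0,0)$ for some $\varepsilon_0>0$---which is immediate since it is not a cylinder. This $\varepsilon_0$ may depend on where $0$ sits on that particular bowl, but that is harmless for the rest of your argument. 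The paper sidesteps this by running the propagation for every $\varepsilon>0$, applying it to an auxiliary sequence $\bar t_i\to 0$ to force that limit to be a cylinder, and then invoking Theorem~\ref{cylindrical-limit-flow} (your stated alternative). Either route closes the argument once this point is corrected.
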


\begin{proof}
    Assume that $M_t^{t_i}$ converges to $\Sigma_t = \Sigma + t v$, a translating bowl soliton that passes through the origin $0$ at time $0.$ 
    It suffices to prove that $0$ is the tip of $\Sigma_0.$
    The second statement follows from Lemma~\ref{lem:flow-line-conv} and the fact that the flow line of the tip of a bowl soliton moves at a constant speed along a line. 
    
    Suppose toward a contradiction that $0$ is not the tip of $\Sigma_0$. 
    By Lemma~\ref{lem:flow-line-existence}, we can consider the flow line $\ovl{\gamma}(t)$ of $\Sigma_t$ with $\ovl{\gamma}(0)=0$.
    We fix any other sequence $\ovl{t}_i\rightarrow 0$. 
    By Proposition~\ref{prop:convergence}, $M^{\ovl t_i}_t$ converges to a non-collapsed ancient solution $M^{\infty}_t$ which is a translating bowl soliton or a shrinking cylinder. 
    
    Next, we fix an arbitrary $\varepsilon > 0$ and take the constants $\delta$ and  $\tau$ from  Lemma~\ref{backward-cylindrical-propogation}
    applied to~$M^\infty_t$.
    Since $0$ is not the tip,
    by Lemma~\ref{lem:backward-stability}, there exists $\mathcal{T} <\infty$ such that $(\ovl{\gamma}(t),t)$ is $\frac{\delta}{2}$-cylindrical when $t \leq -\mathcal{T}.$  
     By Lemma~\ref{lem:flow-line-conv},
     $\gamma^{t_i}(t)$ converges uniformly to $\ovl{\gamma}(t)$ when 
     $t \in [-2\mathcal{T},0].$
     Rescaling back implies that 
     $\pr{\gamma(t_i - H(t_i)^{-2} \mathcal{T}), t_i-H(t_i)^{-2}\mathcal{T}}$ 
     is $\delta$-cylindrical for all sufficiently large $i$.
     By Lemma~\ref{backward-cylindrical-propogation}, $(\gamma(t),t)$ is $\varepsilon$-cylindrical when $t\in [\tau,t_i-H(t_i)^{-2}\mathcal{T}).$ 
     Since $t_i$ and $H(t_i)^{-1}$ both converge to $0,$ this implies that $(\gamma(t),t)$ is $\varepsilon$-cylindrical when $t\in [\tau,0).$ 
     In particular, this means that $M^{\infty}_t$ is $2\varepsilon$-cylindrical at $(0,0).$
     Since $\varepsilon$ is arbitrary, we conclude that $M^{\infty}_t$ is a shrinking cylinder. 
     
     To summarize, we find a sequence $\ovl{t}_i$ such that $M^{\ovl t_i}_t$ subsequentially converges to a cylinder.  
     Theorem~\ref{cylindrical-limit-flow} then forces $M^{t_i}_t$ to converge to a shrinking cylinder as well, but that contradicts our assumption that $M^{t_i}_t$ converges to a bowl soliton.
\end{proof}

We conclude the section with the uniqueness result for translating limit flows as follows.

\begin{thm}\label{bowl-limit-flow}
    If the sequence $M^{\td t_i}_t$  converges to 
     a bowl soliton along a sequence $\td{t}_i\rightarrow 0$, then $M^{t_i}_t$ converges to the same bowl soliton along any sequence $t_i\rightarrow 0$.
\end{thm}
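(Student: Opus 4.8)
The plan is to reduce Theorem~\ref{bowl-limit-flow} to the already-established uniqueness results by controlling two things along $\gamma$: first the \emph{axis} and \emph{size} of the bowl (equivalently, the asymptotic cylinder at spatial infinity), and second the \emph{location of the tip}. By Lemma~\ref{converges-to-tip-of-bowl}, whenever $M^{t_i}_t$ subconverges to a bowl soliton, the origin is the tip of that bowl and the rescaled flow lines $\gamma^{t_i}$ converge to the tip flow line, which travels at unit speed along a straight line hitting $0$ at time $0$. So the bowl is completely determined by its axis direction and its speed (which are linked, since a bowl soliton in $\bb R^{n+1}$ translating at speed $|v|$ in direction $v/|v|$ is a specific hypersurface up to that data). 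Hence it suffices to show the axis direction $v/|v|$ and the speed $|v| = H(\text{tip})$ are independent of the sequence $t_i\to 0$ along which we subconverge to a bowl.

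First I would use Theorem~\ref{cylindrical-limit-flow} and Lemma~\ref{converges-to-tip-of-bowl} to argue dichotomously: by hypothesis $M^{\td t_i}_t \to$ some bowl $\Sigma_t$, so by Theorem~\ref{cylindrical-limit-flow} (contrapositive) \emph{no} sequence along $\gamma$ can converge to a shrinking cylinder; hence by Proposition~\ref{prop:convergence} \emph{every} sequence $t_i\to 0$ subconverges (after passing to a subsequence) to a bowl soliton, and by Lemma~\ref{converges-to-tip-of-bowl} always with the origin as tip. So the only thing to rule out is that two different sequences give bowls with different axes/speeds. Here I would exploit that the asymptotic cylinder of the limit bowl must be parallel to the tangent-flow cylinder by Proposition~\ref{prop:convergence}, which already pins the axis \emph{direction} up to sign — and orientation is fixed because the tip flow line arrives at $0$ from a definite side. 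This leaves only the \emph{radius} of the asymptotic cylinder, equivalently the speed $|v|$, to be shown sequence-independent.

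To pin the speed, I would run a backward-stability / continuity argument of the same flavor as Lemma~\ref{backward-cylindrical-propogation}, but tracking the asymptotic cylindrical scale rather than just $\varepsilon$-cylindricality. Concretely: suppose $M^{t_i}_t \to \Sigma_t$ (bowl, speed $a$) and $M^{s_i}_t \to \Sigma'_t$ (bowl, speed $b$), with $a\ne b$. Using Lemma~\ref{lem:flow-line-conv} and the second clause of Lemma~\ref{lem:backward-stability} applied to $\Sigma_t$ at a point off the tip, one shows that for any $\delta'>0$, the points $\pr{\gamma\pr{t_i - H(t_i)^{-2}\mathcal T'}, t_i - H(t_i)^{-2}\mathcal T'}$ are $\delta'$-cylindrical with cylindrical radius converging (after rescaling by $H$) to the asymptotic radius of $\Sigma_t$, i.e. $\sqrt{2(n-1)}/a$ in the natural normalization; symmetrically for $s_i$ and $b$. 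Then one transports this scale information forward/backward along $\gamma$ between the $t_i$-times and the $s_i$-times — the mean curvature $H$ is monotone and smooth along the regular part of $\gamma$ near a neck, and $\pr{\gamma(t),t}$ is $\varepsilon$-cylindrical on a whole interval $[\tau,0)$ by the argument inside Theorem~\ref{cylindrical-limit-flow} — to conclude that the two asymptotic scales must agree, hence $a=b$. The main obstacle I expect is precisely this scale-transport step: $\varepsilon$-cylindricality as defined in Definition~\ref{def:sing} only records closeness to \emph{some} shrinking cylinder at scale $H^{-1}$, so I must upgrade it to a quantitative, sequence-uniform statement about the ratio of the actual neck radius to $H^{-1}$ along $\gamma$, and show this ratio has a limit as $t\to 0$; this requires combining the smooth convergence in Lemma~\ref{lem:flow-line-conv} with a covering/iteration over a fixed time interval as in Lemma~\ref{backward-cylindrical-propogation}, and is where the real work lies. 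Once $a=b$ and the axis agree, the tip flow lines (unit-speed lines through $0$ at time $0$, parallel asymptotic axes, same side) coincide, so $\Sigma_t = \Sigma'_t$, completing the proof.
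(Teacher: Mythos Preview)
Your reduction is right up to the dichotomy: by Theorem~\ref{cylindrical-limit-flow} no sequence can give a cylinder, so every subsequential limit along $\gamma$ is a bowl with tip at $0$ (Lemma~\ref{converges-to-tip-of-bowl}) and axis parallel to the tangent-flow axis (Proposition~\ref{prop:convergence}). But from here you misidentify what is hard and what is easy.

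The speed $|v|$ is \emph{automatically} $1$: the rescaling in $M^{t_i}_t$ is by the factor $\lambda_i = H\pr{\gamma(t_i),t_i}$, so the limit satisfies $H(0,0)=1$. For a bowl $\Sigma + tv$ the translator equation gives $H = \pair{v,\N}$, hence at the tip (where $\N$ is parallel to $v$) one has $H_{\mathrm{tip}} = |v|$. Since the tip is at the origin, $|v|=1$. Thus the limit bowl is determined by a \emph{unit} vector $v$ parallel to the tangent-flow axis; there are exactly two candidates $\Sigma_t = \Sigma + te_0$ and its reflection $-\Sigma - te_0$. Your entire ``scale-transport'' program for pinning the speed is unnecessary.

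Conversely, your one-line dismissal of the orientation (``the tip flow line arrives at $0$ from a definite side'') is the genuine gap. Two subsequential bowl limits with opposite orientations would have tip flow lines $t\mapsto te_0$ and $t\mapsto -te_0$, arriving at $0$ from \emph{opposite} sides; saying the side is ``definite'' is precisely the statement to be proved. The paper resolves this with a short intermediate-value argument on the unit normal: if $\N\pr{\td t_i}\to -e_0$ (from the bowl $\Sigma + te_0$) while along some other sequence $\N(t_i)\to e_0$ (from the reflected bowl), continuity of $\N$ along $\gamma$ produces $s_i\to 0$ with $\pair{\N(s_i),e_0}=0$. Any subsequential limit of $M^{s_i}_t$ is a bowl or a cylinder with axis parallel to $e_0$ and with $\N(0,0)\perp e_0$; the first is impossible at a bowl tip, forcing a cylindrical limit, which contradicts Theorem~\ref{cylindrical-limit-flow}. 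That is the missing idea in your proposal.
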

\begin{proof}
    By Lemma~\ref{converges-to-tip-of-bowl}, after translations and rotations, the limit of $M^{\td t_i}_t$ along the sequence
    $\td{t}_i$ is the bowl soliton $\Sigma_t = \Sigma + t e_0$ where $0$ is the tip of $\Sigma$ and $e_0$ is the unit vector pointing in the direction of the $x_0$-axis in $\bb R^{n+1}$ (unique up to a sign).
    Given any other sequence $t_i\rightarrow 0,$ we assume that $M^{t_i}_t$ converges to $M_t^{\infty}$ along the sequence $t_i\rightarrow 0$. 
    By Theorem~\ref{cylindrical-limit-flow}, $M_t^{\infty}$ can't be a shrinking cylinder. 
    Hence, by Proposition~\ref{prop:convergence}, the limit $M_t^{\infty}$ must be a bowl soliton with its axis parallel to that of $\Sigma$.
    By Lemma~\ref{converges-to-tip-of-bowl}, $0$ must be the tip of $M_0^{\infty}$. 
    Hence, $M_t^{\infty}$ is either $\Sigma + te_0$ or $-\Sigma - te_0$, where $-\Sigma$ is the reflection of $\Sigma$ along $e_0$ direction.
    We claim that the latter situation can not happen.

    Suppose toward a contradiction that $M_t^{\infty} = -\Sigma - t e_0$. 
    Then the (outer) unit normal of $M_0^{\infty}$ at the origin is $e_0.$
    Therefore, if we let $\N(t)$ denote the unit normal vector at $\gamma(t)\in M_t,$ the contradiction assumption implies that $\N( t_i)\rightarrow e_0.$
    Similarly, we have  $\N\pr{\td{t}_i} \rightarrow -e_0$.  
    In particular, $\langle\N(t_i),e_0\rangle > 0$ and $\pair{\N\pr{\td{t}_i},e_0} < 0$ for sufficiently large $i$. 
    Since $\N(t)$ is continuous, for large $i,$ we can find $s_i$ between $t_i$ and $\td{t}_i$ such that $\langle\N(s_i),e_0\rangle=0$. 
    This construction implies $s_i\rightarrow0$. 

    By Proposition~\ref{prop:convergence}, the sequence $M_t^{s_i}$ subsequentially converges to a non-collapsed limit $\td{M}_t^{\infty}$ that is either a bowl soliton or a cylinder with axis parallel to $e_0$. 
    Moreover, at the origin, $\N\perp e_0$ in  $\td{M}_0^{\infty}$ by our choice of $s_i$'s. 
    This forces $\td{M}_t^{\infty}$ to be a cylinder. 
    However, Theorem~\ref{cylindrical-limit-flow} then forces the limit of $M_t^{\td t_i}$ to be a cylinder. 
    This is a contradiction.
    That is, we prove that the limit of $M_t^{t_i}$ must coincide with $\Sigma_t$. 
    Since $t_i$ is arbitrary, the theorem is proven.
\end{proof}


\subsection{Uniqueness of limit tangents}
\label{sec:proof-main-theorems}

Based on Theorem~\ref{cylindrical-limit-flow} and Theorem~\ref{bowl-limit-flow}, we may call a flow line cylinder-type or bowl-type depending on the limit flows along it.
Based on the types of limit flows along a gradient curve limiting to a neck singularity, there are two possibilities.
That is, it is either bowl-type or cylinder-type.
For the former case, we will deal with it directly;
for the later case, we will reduce it to the work of Colding--Minicozzi \cite{CM18, CM19}.
Since we will work in a similar situation in Section~\ref{sec:4.3}, we single out this result for cylinder-type singularities for all types of generalized cylinders.

\begin{lemma}
    \label{lem:cylinder-type-case}
    Let $M_t$ be a closed mean convex mean curvature flow in $\bb R^{n+1}$ and let $u$ be its arrival time function with $u(0)=0$.
    Suppose $0$ is a cylindrical singularity whose tangent flow is a $k$-cylinder 
    and suppose $\gamma$ is a gradient flow line for $u$ converging to $0.$
    If all limit flows along $\gamma$ are $k$-cylinders with axes parallel to that of the tangent flow, then the unit tangents of $\gamma$ converge.
\end{lemma}

This is a consequence of the geometry of shrinking cylinders and theories established by Colding--Minicozzi.
We will apply Lemma~\ref{lem:cylinder-type-case} with $k=1$ in this section and with general $k$ in Section~\ref{sec:4.3}.
We remark again that Lemma~\ref{lem:cylinder-type-case} already assumes the convergence of the flow line, which is fine since we always prove the \L ojasiewicz theorem first before applying this lemma both here and in Section~\ref{sec:4.3}.
In Section~\ref{sec:gen-cyl}, we make use of similar techniques in the proof of Theorem~\ref{thm-L2}, while different aspects of the evolution inequalities are explored.
See Remark~\ref{rmk:sec34-cyl}.

\begin{proof}
[Proof of Lemma~\ref{lem:cylinder-type-case}]
After a rotation, we may assume the axis of the tangent flow is $\set{0}\times \bb R^k$ where $0=0^{n-k+1}\in\bb R^{n-k+1}.$
Let $\Pi_{\rm axis}$ be the orthogonal projection onto $\set 0 \times \bb R^k$ and define its complement $\Pi:= {\rm Id}_{\bb R^{n+1}} - \Pi_{\rm axis}.$

The assumption implies that $ \gamma' = \h$ behaves similarly to that of the shrinking cylinder near the limit point. 
More precisely, there exists $\tau < 0$ such that for all $t \in [\tau,0)$ we have \\
(1) $H = H(t) =  H\pr{\gamma(t),t}> 0,$\\
(2) $\frac{1}{2n}H^3 \leq\partial_t H \leq 2H^3$, and\\
(3) $\abs{\Pi_{\rm axis}\circ \gamma'(t)} 
= |\langle \gamma',e_0\rangle(t)| 
= o\pr{|\gamma'|}$ as $t\to 0.$\\
The second inequality implies that $-4\leq \frac{d}{dt}H^{-2} \leq -\frac{1}{n}$.
Since $H(t)$ is smooth on $[\tau,0)$ and $H(t)\rightarrow \infty$ as $t\rightarrow 0$, we integrate the differential inequality and obtain that
\begin{align*}
	\sqrt{\frac{1}{-4t}} \leq H(t)\leq \sqrt{\frac{n}{-t}}.
\end{align*}
Therefore,
\begin{align}\label{main;gamma-distance-up-bound}
	|\gamma(t)|\leq \int_t^0 |\gamma'(s)|ds = \int_t^0 |H(s)|ds \leq 2\sqrt{-nt}.
\end{align}
In particular, $\gamma(t) \in B_{2n\sqrt{-t}}(0)$ for all $t\in [\tau,0)$.

Next, we compute
\begin{align}\label{main;gamma-axis-proj-small}
	\abs{\Pi_{\rm axis}\circ \gamma(t)} \leq \int_t^0 \abs{\Pi_{\rm axis}\circ \gamma'(s)} ds = o(\sqrt{-t})
\end{align}
as $t\to 0.$
The geometry of the tangent flow and (\ref{main;gamma-distance-up-bound})implies that 
\begin{align}\label{main;gamma-dist-low-bound}
    \abs{\Pi \circ\gamma(t)}  \geq \sqrt{-t}.
\end{align} 
Combining \eqref{main;gamma-axis-proj-small} and \eqref{main;gamma-dist-low-bound}, we see that there exists $d<\infty$ large enough such that 
\begin{align}\label{gamma(t)-in-Cd}
    \gamma(t)\in
    C_d:= \set{x\in \Omega: |\Pi_{\rm axis}(x)| \le d|\Pi(x)|}
\end{align}
for $t\in[\tau,0).$

Now, we are going to apply the theories established by Colding--Minicozzi in \cite{CM18, CM19}.
First, by~\cite[Theorem 1.3]{CM18}, there exists $\delta>0$ such that $u$ is $C^2$ and has the unique critical point $0$ in $C_d\cap B_\delta.$
The proofs of \cite[Theorems~2.1, 2.2, 3.3]{CM19} then go through in the region $C_d\cap B_\delta$ since the key property is the $C^2$-regularity of $u.$
To be more precise, the proof of \cite[Theorem~2.1]{CM19}, which is essentially based on the fundamental theorem of calculus, implies that given any $\ovl\varepsilon>0,$ we can find $\ovl\delta\in(0,\delta)$ such that
\begin{align*}
    \abs{\n u(q) + \frac{q-p}{n-k}}
    \le \ovl \varepsilon |q-p|
\end{align*}
for all $p,q\in C_d\cap B_{\ovl \delta}.$
Since this is true for any $\ovl\varepsilon>0,$ in particular, the inequality implies
\begin{align}\label{CM19-(2.2)}
    \frac{|\n u|^2}{-u}(p)
    \to \frac 2{n-k}
\end{align}
as $p\in C_d\cap B_\delta$ tends to $0.$
The relation \eqref{CM19-(2.2)} and the property of being $C^2$ are the only two ingredients in the proofs of \cite[Theorems~2.2 and 3.3]{CM19}.
Thus, \cite[Theorem~3.3]{CM19} is valid in our case by \eqref{gamma(t)-in-Cd} and \cite[Theorem 1.3]{CM18}.

With \cite[Theorem~3.3]{CM19},
we know that if the summability estimate \cite[Equation (3.17)]{CM19} is true for the rescaled mean curvature flow obtained by rescaling with respect to a cylindrical singularity $(0,0),$ then Theorem~\ref{thm:AT-neck} follows.
Thus, the summability estimate \cite[Equation~(3.17)]{CM19} is the only missing piece.
However, in \cite[Sections 4, 5, and 6]{CM19}, Colding--Minicozzi proved \cite[Equation~(3.17)]{CM19} for any rescaled mean curvature flow converging to a generalized cylinder without additional conditions (like $C^2$-regularity of $u$).
In particular, the estimate \cite[Equation~(3.17)]{CM19} is true for the rescaled mean curvature flow obtained with respect to the cylindrical singularity $(0,0)$ in our case.
This proves Lemma~\ref{lem:cylinder-type-case}.
\end{proof}

Now, we use the uniqueness of limit flows and Lemma~\ref{lem:cylinder-type-case} to prove the main theorem of this section.
It immediately implies Theorem~\ref{thm:main} and the case of two-convex flows claimed in Section~\ref{sec:main-results} since the arrival time function is $C^2$ at spherical points and hence Colding--Minicozzi's work already covers this case.

\begin{thm}[Arnold--Thom for neck singularities]\label{thm:AT-neck}
    Let $M_t$ be a closed mean convex mean curvature flow in $\bb R^{n+1}$ and let $u$ be its arrival time function.
    If $\gamma$ is a gradient flow line for $u$ with a neck limit point,
    then the limit point of $\gamma$ is unique and the unit tangents of $\gamma$ converge.
\end{thm}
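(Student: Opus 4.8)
The plan is to split the proof according to the two cases identified by the uniqueness of limit flows (Theorem~\ref{cylindrical-limit-flow} and Theorem~\ref{bowl-limit-flow}): $\gamma$ is either cylinder-type or bowl-type. The uniqueness of the limit point is already established by the \L ojasiewicz theorem (Theorem~\ref{thm-L}), so what remains in both cases is the convergence of the unit tangents $\gamma'(t)/|\gamma'(t)|$ as $t\to 0$.

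\textbf{Cylinder-type case.} Here every limit flow along $\gamma$ is a shrinking cylinder, all parallel to the tangent cylinder (whose axis we have normalized to pass through the origin). The point is that this cylindrical structure means $\gamma$ approaches the singularity \emph{transversely}, i.e., staying uniformly away from the (one-dimensional) kernel of $\Hess u$. More precisely, I would argue that the rescaled flow lines $\gamma^{t_i}$ converge to a flow line of a shrinking cylinder (by Lemma~\ref{lem:flow-line-conv}), which in a shrinking cylinder $S^{n-1}(\sqrt{-2(n-1)t})\times\bb R$ is a curve lying on a fixed $2$-plane spanned by a sphere radial direction and the axis direction, shrinking in the spherical factor while translating at bounded speed along the axis. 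This uniform transversality lets me invoke the Colding--Minicozzi regularity of the arrival time function away from the axis of the tangent cylinder \cite{CM18}: although $u$ need not be $C^2$ at the singularity, it is well-behaved when approached transversely, and the optimal \L ojasiewicz inequality \cite[Theorem~2.1]{CM19} together with the sharp convergence-rate analysis of \cite{CM19} applies along $\gamma$. This reduces the convergence of unit tangents in this case to exactly the situation treated by Colding--Minicozzi. The work here is to make precise the sense in which ``$\gamma$ stays transverse'' (a definite lower bound on the distance of $\gamma(t)$, suitably rescaled, from the tangent axis, uniform as $t\to 0$), which follows from the uniqueness of the cylindrical limit flow: if $\gamma$ came arbitrarily close to the axis along a subsequence, a blow-up along that subsequence at the natural scale would not be a cylinder through a regular point at bounded distance, contradicting Theorem~\ref{cylindrical-limit-flow}.

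\textbf{Bowl-type case.} Here every limit flow along $\gamma$ is the \emph{same} translating bowl soliton $\Sigma_t=\Sigma+te_0$ with tip at the origin at time $0$ (Theorem~\ref{bowl-limit-flow} and Lemma~\ref{converges-to-tip-of-bowl}), and moreover $\gamma(t_i)\to 0$ with the rescaled flow lines converging to the flow line of the tip. Since the tip flow line of a bowl soliton moves at constant speed along the fixed line $\bb R e_0$, I would use the convergence at \emph{all} scales and base points $t_i\to 0$ — which is the content of the uniqueness of the bowl limit flow — to control $\gamma'(t)$ directly. Concretely, write $\gamma'(t)=\h_{M_t}(\gamma(t))$; as $t\to 0$, the flow near $\gamma(t)$ at scale $H(t)^{-1}$ looks like the bowl soliton near its tip with axis $e_0$ (up to an error vanishing as $t\to 0$, uniformly, by Theorem~\ref{bowl-limit-flow}), so the mean curvature vector there points in the $-e_0$ direction at the tip, giving $\gamma'(t)/|\gamma'(t)|\to -e_0$ (the sign being pinned down by mean convexity and the argument in the proof of Theorem~\ref{bowl-limit-flow} ruling out the reflected bowl). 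The key input is that the convergence to the bowl is uniform over all rescalings, so there is no oscillation of the tangent direction; this is precisely what Theorem~\ref{bowl-limit-flow} buys us.

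\textbf{Main obstacle.} The principal difficulty is the cylinder-type case: extracting a \emph{quantitative} transversality statement from the qualitative uniqueness of the limit cylinder, and then cleanly importing the Colding--Minicozzi \L ojasiewicz-inequality and convergence-rate machinery \cite{CM18, CM19} — which was stated for $C^2$ arrival time functions — into our setting where $u$ is only $C^{1,1}$ globally but behaves like a $C^2$ function along the transverse approach of $\gamma$. I expect that one must localize the Colding--Minicozzi estimates to a conical region around $\gamma$ avoiding the tangent axis, verify that all their hypotheses (graphicality over the cylinder, control of the rescaled flow in the appropriate parabolic neighborhoods) hold along $\gamma$ by Proposition~\ref{prop:convergence} and Theorem~\ref{cylindrical-limit-flow}, and only then conclude. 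The bowl-type case, by contrast, is comparatively soft once Theorem~\ref{bowl-limit-flow} is in hand.
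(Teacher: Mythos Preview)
Your two-case split and the bowl-type argument match the paper exactly. The cylinder-type plan---establish transversality of $\gamma$ to the tangent axis, then import the Colding--Minicozzi machinery in a conical region where $u$ is $C^2$---is also the paper's route. Two corrections, though. First, a flow line of the shrinking cylinder $S^{n-1}(\sqrt{-2(n-1)t})\times\bb R$ has \emph{zero} axial velocity (the mean curvature vector is purely radial), not ``bounded speed along the axis''; this only strengthens transversality. Second, your contradiction argument for transversality is not right as stated: the blow-up at $(\gamma(t_i),t_i)$ with scale $H(t_i)$ is \emph{always} a cylinder through the origin as a regular point, by construction and Theorem~\ref{cylindrical-limit-flow}, regardless of how $\gamma(t_i)$ sits relative to the tangent axis---so there is nothing to contradict from that blow-up alone.

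The paper instead obtains transversality by direct integration. From the uniform cylindricality along $\gamma$ one reads off, for $t$ near $0$: (i) $\frac{1}{2n}H^3\le\partial_t H\le 2H^3$, which integrates to $H(t)\asymp(-t)^{-1/2}$ and hence $|\gamma(t)|\le\int_t^0 H\le 2\sqrt{-nt}$; (ii) $|\langle\gamma'(t),e_0\rangle|=o(|\gamma'(t)|)$ since the cylinder normal is orthogonal to the axis, which integrates to $|\Pi_{\rm axis}\gamma(t)|=o(\sqrt{-t})$; and (iii) the bound $|\gamma(t)|\lesssim\sqrt{-t}$ together with the tangent-flow geometry (the cylinder of radius $\sqrt{-2(n-1)t}$ centered at $0$) forces $|\Pi\,\gamma(t)|\ge\sqrt{-t}$. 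Combining (ii) and (iii) places $\gamma(t)$ in every cone $C_d=\{|\Pi_{\rm axis}x|\le d\,|\Pi x|\}$ for $t$ close to $0$; then \cite[Theorem~1.3]{CM18} gives $C^2$ regularity of $u$ in $C_d\cap B_\delta$, and the proofs of \cite[Theorems~2.1, 2.2, 3.3 and (3.17)]{CM19} go through there verbatim. This is exactly the localization you anticipated as the main obstacle, but via explicit estimates rather than a soft contradiction.
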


\begin{proof}
    The uniqueness of limit points is addressed in Theorem \ref{thm-L}.
    For the uniqueness of limit tangents, we may assume the limit point is the origin and that it happens at time $t=0$. 
    In particular, $\lim\limits_{t\rightarrow 0}\gamma(t)=0$.

    There are two cases based on the types of limit flows along $\gamma.$

    \noindent\textbf{Case 1:} A limit flow along $\gamma$ is a translating bowl soliton.

	By Theorem~\ref{bowl-limit-flow}, there is a unique bowl soliton $\Sigma$ such that every limit flow along $\gamma$ is $\Sigma_t = \Sigma + te_0$ where $e_0$ is a unit vector parallel to the axis of the cylindrical tangent flow.
	The convergence implies that given any $t_i\to 0,$ after passing to a subsequence, we have $\lim\limits_{i\to \infty} \frac{\gamma'(t_i)}{\abs{\gamma'(t_i)}} = e_0.$
	This implies the genuine convergence $\lim\limits_{t\to 0^-} \frac{\gamma'(t)}{\abs{\gamma'(t)}} = e_0.$
	
	\noindent\textbf{Case 2:} A limit flow along $\gamma$ is a shrinking cylinder.

By Theorem~\ref{cylindrical-limit-flow}, given any $t_i\to 0,$ after passing to a subsequence, the limit flow is a shrinking cylinder parallel to the axis of the neck tangent flow.
Thus, applying Lemma~\ref{lem:cylinder-type-case} finishes the proof of this case.
This completes the proof of Theorem~\ref{thm:AT-neck}.
\end{proof}


\section{\bf Mean convex flows with non-degenerate cylindrical singularities}
\label{sec:gen-cyl}

\subsection{Uniqueness of limit flows}
In this section, we will study mean convex flows with singularities modeled on $k$-cylinders for general $k\ge 1.$
We are going to define non-degenerate cylindrical singularities.
We will need some good properties of limit flows at such singularities.
To this end, we introduce the notion of Gaussian densities.

\begin{definition}
    Let $M_t^n$ be a smooth mean curvature flow in $\bb R^{n+1}$ and $(x_0, t_0)\in \bb R^{n+1}\times \bb R.$ 
    The Gaussian density of $M_t$ at $(x_0, t_0)$ with scale $r$ is defined by
    \begin{align*}
        \Theta_{M_t}(x_0, t_0,r)
        := \frac 1{\pr{4\pi r^2}^{n/2}}
        \int_{M_{t_0-r^2}} e^{-\frac{|x-x_0|^2}{4r^2}} dx
    \end{align*}
    and the Gaussian density of $M_t$ at $(x_0, t_0)$ is the limit
    \begin{align*}
        \Theta_{M_t}(x_0, t_0)
        := \lim_{r\to 0^+} \Theta_{M_t}(x_0, t_0,r).
    \end{align*}
\end{definition}

A closely related notion is the {\rm entropy} introduced by Colding--Minicozzi \cite{CM12}.
For an $n$-dimensional hypersurface $\Sigma\sbst\bb R^{n+1},$ its entropy is defined by
\begin{align*}
    \lambda(\Sigma)
    := \sup_{\substack{x_0\in\bb R^{n+1}\\ t_0>0}}
    \frac 1{(4\pi)^{n/2}}
    \int_{t_0\Sigma + x_0}
    e^{-\frac{|x|^2}{4}}
    dx.
\end{align*}
By Huisken's monotonicity \cite{H90}, given a mean convex flow\footnote{
A mean convex level set flow supports a Brakke flow, for which Huisken's monotonicity applies.
} 
$M_t$ and a point $(x_0, t_0),$ the density~$\Theta_{M_t}(x_0, t_0, \cdot)$ and the entropy $\lambda\pr{M_{(\cdot)}}$ are always monotone.
Thus, the definitions are both valid as long as we start from an initial hypersurface with finite entropy.
Since we always start from a smooth closed hypersurface, in particular, they are always well-defined.

By definition, we have $\lambda(\Sigma) = \lambda\pr{\Sigma\times\bb R^k}$ for any hypersurface $\Sigma$ and $k\ge 0.$
Thus, when talking about the entropy of a cylinder, we will always look at its spherical factor.
We record a simple lemma about good properties of the entropy functional.
The second one is based on the calculations done by Stone \cite{S94}.

\begin{lemma}\label{lem:ent}
    The entropy functional satisfies the following properties.\\
    (1) (\cite{CM12}) It is lower semi-continuous in the $C^\infty_{\rm loc}$-topology.\\    
    (2) (\cite{S94}) The entropies of all generalized cylinders satisfy 
    \begin{align*}
        \lambda\pr{S^1}
    > \lambda\pr{S^2}
    > \cdots
    > \lambda\pr{S^n}
    >\sqrt 2
    > \lambda\pr{\bb R^n}
    = 1
    \end{align*}
    for any $n\in\bb N.$
\end{lemma}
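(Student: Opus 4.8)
\textbf{Proof proposal for Lemma~\ref{lem:ent}.}

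The plan is to prove the two items separately, since they are of a completely different nature. Item (1) is a soft functional-analytic statement about the entropy, and item (2) is a concrete spectral/variational computation about round spheres. I would begin with the easy observation that both assertions reduce to statements about hypersurfaces, because the identity $\lambda(\Sigma) = \lambda(\Sigma\times\bb R^k)$ (already recorded in the excerpt) lets us strip off all Euclidean factors; in particular $\lambda(S^{n-k}(\sqrt{2(n-k)})\times\bb R^k) = \lambda(S^{n-k}(\sqrt{2(n-k)})) = \lambda(S^{n-k})$, where in the last step we use scaling invariance of the entropy together with the fact that $\sqrt{2m}$ is precisely the self-shrinker radius of $S^m$, so the supremum defining $\lambda$ is already attained at $(x_0,t_0)=(0,1)$.

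For item (1): lower semicontinuity in the $\Cloc$-topology. Suppose $\Sigma_j \to \Sigma$ in $\Cloc$. Fix any $(x_0,t_0)$ with $t_0>0$; then the Gaussian weight $(4\pi)^{-n/2}e^{-|x|^2/4}$ restricted to $t_0\Sigma_j + x_0$ converges locally uniformly to the same weight on $t_0\Sigma + x_0$, and by Fatou's lemma (the integrands are nonnegative) one gets
\begin{align*}
    \frac 1{(4\pi)^{n/2}}\int_{t_0\Sigma + x_0} e^{-\frac{|x|^2}{4}}\,dx
    \le \liminf_{j\to\infty}
    \frac 1{(4\pi)^{n/2}}\int_{t_0\Sigma_j + x_0} e^{-\frac{|x|^2}{4}}\,dx
    \le \liminf_{j\to\infty}\lambda(\Sigma_j).
\end{align*}
Taking the supremum over all $(x_0,t_0)$ on the left yields $\lambda(\Sigma)\le\liminf_j\lambda(\Sigma_j)$. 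This is essentially the argument from \cite{CM12}, so I would just cite it; the only point worth noting is that one does not get continuity, only semicontinuity, because mass can escape to infinity.

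For item (2): the strict monotonicity $\lambda(S^1)>\lambda(S^2)>\cdots>\lambda(S^n)>\sqrt2>\lambda(\bb R^n)=1$. The value $\lambda(\bb R^n)=1$ is immediate from the Gaussian integral $(4\pi)^{-n/2}\int_{\bb R^n}e^{-|x|^2/4}\,dx = 1$, and it is a supremum attained for every choice of scale and center. For the spheres, by the reduction above $\lambda(S^m) = (4\pi)^{-m/2}\,\mathrm{Vol}(S^m(\sqrt{2m}))\,e^{-m/2}$, which one can evaluate in closed form using $\mathrm{Vol}(S^m(r)) = \omega_m r^m$ with $\omega_m$ the volume of the unit $m$-sphere; the resulting sequence is exactly the one computed by Stone \cite{S94}, and checking that $m\mapsto \lambda(S^m)$ is strictly decreasing and that $\lambda(S^m)>\sqrt2$ for all $m$ while $\lambda(S^m)\to 1$ is a finite computation with Gamma functions (one can, e.g., examine the ratio $\lambda(S^{m+1})/\lambda(S^m)$ and show it is $<1$ using monotonicity of $\Gamma$). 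I would not reproduce this computation but cite \cite{S94}; the one subtlety I would flag explicitly is that the supremum in the definition of $\lambda(S^m)$ is genuinely attained at the standard self-shrinking scale (so the closed-form value above really is the entropy and not merely a lower bound for it), which follows from Huisken's monotonicity formula — as already noted in the paragraph preceding the lemma — applied to the shrinking-sphere flow. The main (and only mildly technical) obstacle is this last point together with the Gamma-function estimate showing strict monotonicity; everything else is routine.
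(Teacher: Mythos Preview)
Your proposal is correct and in fact goes well beyond what the paper does: the paper gives no proof at all for this lemma, merely recording it with citations to \cite{CM12} and \cite{S94} (the sentence preceding the lemma calls it ``a simple lemma'' and says the second part ``is based on the calculations done by Stone''). Your sketch---Fatou for item~(1), the explicit Gaussian integral over the self-shrinking sphere together with Huisken monotonicity to identify the supremum for item~(2)---is exactly the standard argument underlying those references, so there is nothing to correct.
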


For general dimensions, we follow \cite{SS} and consider non-degenerate singularities in the following way.

\begin{definition}\label{def:nondeg}
    Let $M_t$ be a mean curvature flow and $(x_0, t_0)$ be a cylindrical singularity.
    We call $(x_0, t_0)$ a non-degenerate cylindrical singularity if all non-trivial special limit flows are multiplicity one cylinders.
\end{definition}

Based on the definition, a priori, it is possible that at a non-degenerate cylindrical point, different limit flows are different types of cylinders (with axes of different dimensions and directions).
In the following theorem, we prove that this cannot happen at a non-degenerate cylindrical point.

\begin{thm}\label{lem:nondeg-unique}
    Let $M_t$ be a mean curvature flow with finite entropy and $(x_0, t_0)$ is a singularity of the flow.
    If $(x_0, t_0)$ is a non-degenerate cylindrical singularity whose tangent flow is given by a $k$-cylinder, then all non-trivial special limit flows are $k$-cylinders with axes parallel to that of the tangent flow.
\end{thm}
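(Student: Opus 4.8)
\textbf{Proof proposal for Theorem~\ref{lem:nondeg-unique}.}

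The plan is to run a connectedness-type argument in the space of scales, combining the entropy constraints from Lemma~\ref{lem:ent} with the rigidity of tangent flows from \cite{CM15}. First I would set up notation: for a fixed sequence $t_i < t_0$ with $t_i \to t_0$ and scales $\lambda_i\to\infty$, a special limit flow $M^\infty_t$ is obtained; by non-degeneracy it is a multiplicity one cylinder, say in $\C_{k_i}$ in the limit along the chosen subsequence. Since the tangent flow at $(x_0,t_0)$ is a $k$-cylinder, Huisken's monotonicity gives that the Gaussian density $\Theta_{M_t}(x_0,t_0)=\lambda(S^{n-k})$, while the density of any special limit flow $M^\infty_t$ (computed at the appropriate space-time point, e.g.\ its own singular point $0$ at time $0$) is again $\lambda(S^{n-j})$ if $M^\infty_t\in\C_j$. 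The first key step is to observe that \emph{the Gaussian density is monotone non-increasing under blow-up}: the entropy of the initial hypersurface is finite, and for a special limit flow, Huisken monotonicity on the original flow forces $\lambda(S^{n-j}) = \Theta_{M^\infty_t}(0,0) \le \Theta_{M_t}(x_0,t_0) = \lambda(S^{n-k})$. By the strict ordering $\lambda(S^1) > \lambda(S^2) > \cdots > \lambda(S^n) > \sqrt 2 > 1$ in Lemma~\ref{lem:ent}(2), this gives $j \ge k$. Conversely, a special limit flow at $(x_0,t_0)$ that is a $j$-cylinder with $j > k$ would, after a further blow-up, produce a $j$-cylinder as a limit flow, but by choosing the blow-up point correctly one can also recover the tangent cone / a limit flow that is a $k$-cylinder — in fact the cleaner route is the density argument combined with the fact that a special limit flow is itself an ancient mean curvature flow whose own tangent flow at $-\infty$ (or at its singular point) must have density $\ge$ that of the $k$-cylinder. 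I would therefore conclude $j = k$, so every special limit flow is a $k$-cylinder.

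The second key step is to pin down the \emph{axis direction}. Having shown all special limit flows lie in $\C_k$, I would argue by a continuity/connectedness argument in the parameter $\log\lambda$: define, for each large scale $\lambda$ (and the chosen space-time center approaching $(x_0,t_0)$), the rescaled flow $M^{(x_0,t_0,\lambda)}_t$; by the non-degeneracy and the $j=k$ conclusion, for $\lambda$ large this flow is $\varepsilon$-close to \emph{some} $k$-cylinder $C(\lambda)\in\C_k$, and the assignment $\lambda\mapsto C(\lambda)$ (the nearest cylinder, well-defined for $\lambda$ large by the strict stability/uniqueness of the cylinder as a shrinker) varies continuously in $\lambda$. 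The tangent flow being a $k$-cylinder with a definite axis means $C(\lambda)$ converges to that fixed cylinder as $\lambda\to\infty$; since the space of axes (a Grassmannian times a translation parameter, modulo the cylinder's own symmetries) is such that the subset of $k$-cylinders $\varepsilon$-close to a given one is connected, and since $C(\lambda)$ cannot jump (two distinct cylinders in $\C_k$ are uniformly separated in $C^{\lceil1/\varepsilon\rceil}$ unless close), the axis of $C(\lambda)$ stays parallel to that of the tangent flow for all large $\lambda$. Passing to the limit along any sequence $\lambda_i\to\infty$ then shows the corresponding special limit flow is a $k$-cylinder with axis parallel to the tangent flow's axis. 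One must also handle space-time centers $(x_i,t_i)$ that are \emph{not} equal to $(x_0,t_0)$: here I would use that $(x_i,t_i)\to(x_0,t_0)$ together with the upper semicontinuity of Gaussian density (White's local regularity / Huisken monotonicity) to ensure that for $(x_i,t_i)$ near $(x_0,t_0)$ the density is still $\le \lambda(S^{n-k})$, so the same density-plus-ordering argument forces the $k$-cylinder conclusion, and the axis is controlled by closeness to the tangent flow.

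I expect the main obstacle to be the second step — controlling the \emph{axis} uniformly across all scales and all admissible centers, rather than just the \emph{type} (the integer $k$). The type is rigidly controlled by the discrete entropy gap in Lemma~\ref{lem:ent}(2), which makes that part essentially automatic. For the axis, the subtlety is that a priori the flow could, at a sequence of intermediate scales, drift its nearest-cylinder axis continuously away from the tangent flow's axis and back; ruling this out requires either (i) a quantitative ``no-gap'' statement — distinct cylinders in $\C_k$ are uniformly far apart in the relevant $C^\ell$ norm on a fixed region, combined with continuity of the rescaled flow in $\log\lambda$, which is exactly the mechanism behind the uniqueness of cylindrical tangent flows in \cite{CM15} — or (ii) invoking the \L ojasiewicz-type arguments of Colding--Minicozzi to get a definite rate of convergence to the tangent cylinder, forcing the axis to be monotone and hence unique. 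I would pursue route (i) as it is softer: the combination of ``the rescaled flow is $C^0$-continuous in $\log\lambda$,'' ``it is $\varepsilon$-close to \emph{a} cylinder at every large scale (by non-degeneracy),'' and ``distinct $k$-cylinders through nearby centers are $C^{\lceil1/\varepsilon\rceil}$-separated by a fixed amount'' yields that the nearest-cylinder map is locally constant in its ``discrete part'' (axis modulo symmetries) for large $\lambda$, and the boundary condition at $\lambda=\infty$ (the tangent flow) fixes which constant it is.
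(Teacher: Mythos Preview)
There are two genuine gaps. First, your entropy/density step has the inequality pointing the wrong way and, more importantly, only yields the easy direction. From $\lambda(S^{n-j})\le\lambda(S^{n-k})$ together with the ordering in Lemma~\ref{lem:ent}(2) you obtain $n-j\ge n-k$, i.e.\ $j\le k$, not $j\ge k$; moreover the identification $\Theta_{M^\infty}(0,0)=\lambda(S^{n-j})$ presupposes that $(0,0)$ is singular for the limit cylinder, which need not hold. The substantive direction is to rule out $j<k$: an $\ell$-cylinder with $\ell<k$ has \emph{higher} entropy than the tangent $k$-cylinder, so no monotonicity or semicontinuity argument gives an obstruction. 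The paper closes this by a two-scale contradiction: for each $i$ it introduces the smallest scale $\rho_i\le r_0$ above which the rescaled flow centered at $(x_i,t_i)$ stays $\varepsilon$-close to a $k$-cylinder in $\C_k^0$, and a largest scale $\underline\rho_i<\rho_i$ at which it is $\varepsilon$-close to an $\ell$-cylinder, then shows that both $\limsup\rho_i/\underline\rho_i<\infty$ and $=\infty$ are impossible (the first by the geometric incompatibility of $k$- and $\ell$-cylinders at the same scale, the second by a density lower bound that forces the $k$-cylindrical limit at scale $\rho_i$ to be singular at the origin). Your sketch contains no analogue of this mechanism.

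Second, your preferred route (i) for the axis cannot work as stated. The set of axes of $k$-cylinders is a connected Grassmannian, so continuity of $\lambda\mapsto C(\lambda)$ together with ``$\varepsilon$-close to \emph{some} $k$-cylinder at every scale'' does not prevent the axis from drifting by an amount of order one over $\log\lambda\in[0,\infty)$; there is no ``discrete part'' to be locally constant. What is actually needed is a bound on the \emph{total} rotation, and this is exactly what the Colding--Minicozzi \L ojasiewicz inequality (your discarded route (ii)) supplies: the paper runs the rescaled flow $\tau\mapsto e^{\tau/2}M^{(x_i,t_i,r_0^{-1})}_{-e^{-\tau}}$ and invokes \cite{CM15} to bound the total axis rotation by $C\varepsilon$ over the full window $\tau\in[1,2\log(r_0/\rho_i)]$, then compares $\rho_i$ both to the curvature scale $H(x_i,t_i)^{-1}$ and to the blow-up scale $\lambda_i^{-1}$ to transport this axis control to the special limit flow. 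The treatment of moving centers $(x_i,t_i)\ne(x_0,t_0)$ is not an afterthought here; it is built into the definition of $\rho_i$ from the outset, and the initial alignment with the tangent flow's axis is secured via Claim~\ref{claim}.
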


This leads to the first part of Theorem~\ref{thm:nondeg-cyl}.
We will use this to prove the Arnold--Thom conjecture for non-degenerated cylindrical singularities later.

\begin{proof}
    We will first show that all limit flows are the same type of cylinders.
    Then we will show that their axes all align.

    \noindent\textbf{Part 1:} Uniqueness of limit types.
    
    After translations and rotations, by the uniqueness of tangent flows \cite{CM15}, we may assume that the singularity is at $(0,0)$ and that the tangent flow at $(0,0)$ is 
    \begin{align*}
        \Gamma_t = \left\{(x_1,...,x_{n+1})\in\bb R^{n+1} \ \Big| \  
        \sum_{i=1}^{n-k+1}x_i^2 
        = -2(n-k)t\right\}
    \end{align*}
    for some $k\ge 0.$ 
    Note that $\Gamma_{-1}\in \C_k^0.$
    
    Let $(x_i, t_i)\rightarrow (0,0)$ be space-time points on $M_t$ and 
    $M^i_t = \lambda_i\pr{M_{\lambda_i^{-2}t + t_i} -x_i}$, where $\lambda_i\rightarrow \infty$. 
    By Brakke's compactness theorem \cite{B78}, after passing to a subsequence, $M^i_t$ converges to an integral Brakke flow $M_t^{\infty}$ ($t<0$). 
    By the non-degenerate assumption, such a limit must be a cylinder in $\C_\ell$ for some $\ell\in \{0,1,...,n\}$.\footnote{Note that this limit flow may not be centered at the origin.
    Moreover this limit flow is smooth for $t<0$, but may not be smooth at $t=0$.}
 
    By Brakke's regularity theorem \cite{B78} (cf. \cite{W05}), the convergence is smooth on compact sets of $\mathbb{R}^{n+1}\times (-\infty,0)$.
    By the lower semi-continuity given in Lemma~\ref{lem:ent}, the entropy of any limit flow is no more than the entropy of the tangent flow, which is a $k$-cylinder.
    Therefore, $\ell \le k$ or $\ell = n$ by Lemma \ref{lem:ent}.
    We aim to show that $\ell = k$ or $\ell = n$.

    We remark that the argument below will also give us a contradiction if $\ell > k.$ 
    Therefore, the entropy argument above is not completely necessary.

    Suppose toward a contradiction that this does not hold. 
    Then there would exist a limit flow $M_t^{\infty}$ as described above and equal to an $\ell$-cylinder with $\ell < k.$

    \begin{claim}\label{claim}
        For any $\varepsilon >0$, there exists $r_0 = r_0(\varepsilon)$ with the following property. 
        For any fixed $0<r_1<r_2\leq r_0$, $\frac{1}{\sqrt{-t}}M_t^{(x_i,t_i,r^{-1})}$ is $\varepsilon$-close\footnote{In the rest of Section~\ref{sec:gen-cyl}, we say $M$ is $\varepsilon$-close to $N$ if $M$ can be written as the graph of a function over $N$ in $C^{\lceil 1/\varepsilon \rceil}\pr{B_{1/\varepsilon}}$ with norm at most $\varepsilon.$} 
        to $\Gamma_{-1}$ for all sufficiently large $i$  whenever $t\in [-2,-\frac{1}{2}]$ and $r \in [r_1,r_2]$.
    \end{claim}
    
    \begin{proof}[Proof of Claim~\ref{claim}]
    We rewrite $M_t^{\pr{x_i,t_i,r^{-1}}}$ as
    \begin{align}\label{Shifting-the-flow-1}
        M_t^{\pr{x_i,t_i,r^{-1}}}
        = r^{-1} \pr{M_{t\pr{r^2 + \frac{t_i}{t}}} - x_i}
        =& \sqrt{1+\frac{t_i}{r^2t}} \cdot
        \sqrt{r^2 + \frac{t_i}{t}}^{-1}
        \pr{M_{t\pr{r^2 + \frac{t_i}{t}}} - x_i} \\
        =& \sqrt{1+\frac{t_i}{r^2t}} \left(M_t^{\pr{0,0,\sqrt{r^2 + t_i/t}^{-1}}} - x_i\right). \nonumber
    \end{align}
    Since the tangent flow at $(0,0)$ is $\Gamma_t$, for any $\varepsilon > 0$ we can find $r_0 = r_0(\varepsilon)$ such that $\frac{1}{\sqrt{-t}}M^{(0,0,r^{-1})}_t$ is $\frac{\varepsilon}{2}$-close to $\Gamma_{-1}$ for all sufficiently large $i$ when $t\in [-4,-\frac{1}{4}]$ and $r \in (0, 2r_0]$. 
    If we fix any $0 < r_1< r_2 \leq r_0$, by our assumption, $\abs{\frac{t_i}{r^2t}} + |x_i| \leq  \frac{\varepsilon}{100}$ for all sufficiently large $i$ whenever $t\in \left[-2,-\frac{1}{2}\right]$ and $r \in [r_1, r_2]$. 
    Therefore, \eqref{Shifting-the-flow-1} implies that $\frac{1}{\sqrt{-t}} M_t^{\pr{x_i,t_i,r^{-1}}}$ is $\varepsilon$-close to $\Gamma_{-1}$ for all sufficiently large $i$ whenever $t\in \left[-2,-\frac{1}{2}\right]$ and $r \in (r_1, r_2]$. 
    \end{proof}
    
    From now on, we assume $ 0 < \varepsilon < \frac{1}{100n}.$
    Take $r_0$ from Claim~\ref{claim}. 
    We define
    \begin{align*}
        \rho_i = \inf\left\{\rho \leq r_0 : \ \begin{aligned}
        &\text{for each } r\in (\rho,r_0), \ \frac{1}{\sqrt{-t}} M_t^{\pr{x_i,t_i,r^{-1}}} \text{ is } \varepsilon\text{-close to}\\ & \text{a cylinder in } \C_k^0 \text{ when }t \in [-2,-1/2] 
    \end{aligned}\right\}
    \end{align*}
    and 
    \begin{align*}
        \underline{\rho}_i = \sup \set{r < r_0  : \ M_{-1}^{\pr{x_i,t_i,r^{-1}}} \text{ is } \varepsilon \text{-close to a cylinder in } \C_\ell^0 }.
    \end{align*}
    Recall that $\C^0_k$ is the space of all $k$-cylinders centered at the origin.
    Again, in both definitions, we don't specify the directions of the cylinders.
    For each $m  > 1$, by Claim~\ref{claim}, we have $\rho_i \leq \frac{r_0}{m}<r_0$ for large $i$. 
    Therefore, $\limsup\limits_{i\to\infty} \rho_i \leq \frac{r_0}{m}$. 
    Letting $m\rightarrow\infty,$ we have
    \begin{align}\label{Cylindrical-scale-tends-to-0}
        \limsup_{i\to\infty}\rho_i  = 0.
    \end{align}

    Since $M_t^{\infty}$ is a shrinking $\ell$-cylinder, its axis keeps a fixed distance to the origin. 
    If we scale it by $\frac{1}{\sqrt{-t}}$ then the axis will get closer to the origin when $-t$ gets larger. 
    As a result, there exists $\mathcal{T} = \mathcal T(\varepsilon) \geq 1$ such that $\frac{1}{\sqrt{-t}}M_t^{\infty}$ is $\frac{\varepsilon}{2}$-close to an $\ell$-cylinder for all $t \leq -\mathcal{T}$.
    We emphasize again that there is no specification for the directions of axes $\mathbb{R}^{\ell}$ here.
    By the smooth convergence, the rescaled flow $\frac{1}{\sqrt{\mathcal{T}}}M_{-\mathcal{T}}^{\pr{x_i,t_i,\lambda_i}}$ is $\varepsilon$-close to an $\ell$-cylinder for all large $i$. 
    Since 
    $\frac{1}{\sqrt{\mathcal{T}}} M_{-\mathcal{T}}^{\pr{x_i,t_i,\lambda_i}} 
    = M_{-1}^{\pr{x_i,t_i,\lambda_i\sqrt{\mathcal{T}}^{-1}}},$ we have $\underline{\rho}_i \geq \lambda_i^{-1}\sqrt{\mathcal{T}} > 0$ for all large $i$.
    On the other hand, the definitions of $\rho_i$ and $\underline{\rho}_i$ imply $\underline{\rho}_i < \rho_i.$
    Combining these, we have
    \begin{align*}
        0 < \underline{\rho}_i < \rho_i
    \end{align*}
     for all large $i$.
    There are then two possible cases. 
    We will derive a contradiction in each case.
    
    \noindent\textbf{Case 1:} $\limsup\limits_{i\to\infty} \frac{\rho_i}{\underline{\rho}_i} < \infty$. 
    
    We may pass to a subsequence, still denoted by $i$, such that 
    $\lim\limits_{i\to\infty} \frac{\rho_i}{\underline{\rho}_i} = \Lambda \geq 1$.
    Consider the sequence of rescaled flows $M_t^{\pr{x_i,t_i, \rho_i^{-1}}}$ that converges to $M_t^{a}$. 
    By the non-degeneracy, \eqref{Cylindrical-scale-tends-to-0}, and the definition of $\rho_i$, the limit $M_t^{a}$ must be a shrinking cylinder that is $2\varepsilon$-close to a $k$-cylinder centered at the origin at time $-1$. 
    Note that this cylinder might be a rotation of $\Gamma_{-1}$. 
    Therefore, the limit flow $M_t^a$ must be a shrinking $k$-cylinder.

    Now, we consider the rescaled flow $M_t^{(x_i,t_i,\underline{\rho}_i^{-1})}$.
    The fact that $\lim\limits_{i\to\infty} \frac{\rho_i}{\underline{\rho}_i} = \Lambda$ and the definition of $M_t^a$ implies that $M_t^{(x_i,t_i,\underline{\rho}_i^{-1})}$ converges to $\Lambda M_{\Lambda^{-2}t}^a$. 
    On the other hand, the definition of $\udl{\rho}_i$ implies that the limit (which is $\Lambda M_{\Lambda^{-2}t}^a$) must be $2\varepsilon$-close to an $\ell$-cylinder in $\mathcal{C}^0_{\ell}$ at $t = -1$. 
    In other words, some time slice of a shrinking $k$-cylinder is $2\varepsilon$-close to an $\ell$-cylinder in $C_{\ell}^0$. 
    This cannot happen when $\varepsilon$ is small enough.
    (For example, we can take $\varepsilon<(100n)^{-1}$ to prevent this.)

    \noindent\textbf{Case 2:} $\limsup\limits_{i\to\infty} \frac{\rho_i}{\underline{\rho}_i} = \infty$. 
    
    We may pass to a subsequence, still denoted by $i$, such that 
    $\lim\limits_{i\to\infty} \frac{\rho_i}{\udl{\rho}_i} = \infty$.
    By the definition of $\udl{\rho}_i,$ we have
    \begin{align*}
        \Theta_{M_t} \pr{x_i,t_i,\underline{\rho}_i}  
        \geq \lambda\pr{S^\ell}- C\varepsilon 
    \end{align*}
    for all large $i$, where $C$ depends only on the dimension and the entropy of the initial data.
    In particular, for all sufficiently small $\varepsilon,$ we have 
    \begin{align}\label{Dnesity-lower-bound}
        \Theta_{M_t} \pr{x_i,t_i,\underline{\rho}_i}  > \frac{4}{3}
    \end{align}
    for all large $i$ by Lemma \ref{lem:ent}.
    Consider the sequence of rescaled flows $M_t^{\pr{x_i,t_i, \rho_i^{-1}}}$ that converges to $M_t^{b}$. 
    Again, $M_t^{b}$ must be a shrinking cylinder that is $2\varepsilon$-close to a $k$-cylinder centered at the origin at time $-1$. 
    This cylinder might be a rotation of $\Gamma_{-1}$. 
    By \eqref{Dnesity-lower-bound} and the fact that $\lim\limits_{i\to\infty} \frac{\rho_i}{\underline{\rho}_i} = \infty,$ we have a density lower bound at $(0,0)$ given by
    \begin{align*}
        \Theta_{M_t^b}(0,0) \geq \frac{4}{3}.
    \end{align*}
    In particular, $(0,0)$ is not a regular point of $M_t^b$. 
    This implies that $M_t^b$ must be a shrinking $k$-cylinder centered at the origin that extincts at time $t=0$. 
    Hence, $\frac{1}{\sqrt{-t}}M_{t}^b$
    is $0$-close to a $k$-cylinder for all $t < 0$. 
    However, the definition of $\rho_i$ implies that $\frac{1}{\sqrt{-t}}M_{t}^{b}$ is not $\frac{\varepsilon}{2}$-close to any $k$-cylinder for some $t \in \left[-2,-\frac{1}{2}\right]$. 
    This is a contradiction.

    So far, we have proved that any limit flow $M_t^{\infty}$ is either $\mathbb{R}^n$ or a $k$-cylinder with multiplicity one. 
    It remains to prove that when we get a $k$-cylindrical limit flow, the axis aligns with the tangent flow $\Gamma_t$.

    \noindent\textbf{Part 2:} Uniqueness of axes.
    
    We first show that at scale $\rho_i$ and above, not only does the flow look close to a $k$-cylinder centered at origin, but also the axis of the cylinder almost aligns with the tangent flow. 

    To this end, consider the rescaled mean curvature flow 
    \begin{align*}
        \ovl{M}^i_{\tau} 
        = e^{\frac{\tau}{2}}M_{-e^{-\tau}}^{\pr{x_i,t_i, r_0^{-1}}}
    \end{align*}
    for $\tau>0.$
    By the definition of $\rho_i$ and the claim, $\ovl{M}_{\tau}^i$ is $\varepsilon$-close to a $k$-cylinder for all $\tau \in \left[0, 2\log\pr{\frac{r_0}{\rho_i}}+\log 2\right]$.
    By the entropy bound and Colding--Minicozzi's \L ojasiewicz inequality \cite{CM15}, the axis of the cylinder has total rotation at most $C\varepsilon$ during 
    $\tau \in \left[1, 2\log\pr{\frac{r_0}{\rho_i}}\right]$ 
    (cf. \cite[Proposition~4.1]{GH} and \cite[Lemma~A.48]{CM15}). 
    By Claim~\ref{claim}, $\ovl{M}_{\tau}^i$ is $\varepsilon$-close to $\Gamma_{-1}$ for $\tau\in [0,10]$ as long as $i$ is large enough. 
    Therefore, $\ovl{M}_{\tau}^i$ is $C\varepsilon$-close to $\Gamma_{-1}$ for all $\tau \in \left[0, 2\log\pr{\frac{r_0}{\rho_i}}\right]$. 
    In particular, for all large $i,$ we have
    \begin{align}\label{Cepsilon-close-to-Gamma}
        M_{-1}^{\pr{x_i,t_i,\rho^{-1}}} \text{ is } C\varepsilon\text{-close to } \Gamma_{-1}  
        \text{ when } \rho \in [\rho_i, r_0].
    \end{align}

    Next, we want to show that the curvature scale is comparable to $\rho_i$.
    That is,
    \begin{align} \label{Compare-regularity-scale-with-cylindrical-scale}
        \liminf \limits_{i\rightarrow \infty} \frac{1}{\rho_i H(x_i, t_i)} > 0.
    \end{align}
    Suppose that this is not the case. 
    Then, we consider $M_t^{\pr{x_i,t_i,\rho_i^{-1}}}$ 
    and a subsequential limit $M_t^{c}.$
    The limit $M_t^{c}$ must be a shrinking $k$-cylinder that is smooth when $t<0$ and has 
    $H^{-1}(0,0) = 0.$\footnote{We use the convention that $H^{-1}(x_0,t_0)=0$ when $(x_0, t_0)$ is a singularity of the flow.}
    In particular, 
    $\frac{1}{\sqrt{-t}} M_t^{c}$ 
    is $0$-close to a $k$-cylinder for all $t < 0$.  
    However, by the definition of $\rho_i$, the limit $\frac{1}{\sqrt{-t}}M_t^{c}$ is not $\frac{\varepsilon}{2}$-close to any $k$-cylinder for some $t \in \left[-2,-\frac{1}{2}\right].$
    This leads to a contradiction.

    Finally, we can finish the proof of the uniqueness of axis part.
    If $M_t^{\infty}$ is a multiplicity one plane, then we are done. 
    Thus, we assume that $M_t^{\infty}$ is a $k$-cylinder. 
    In this case, the axis $\mathbb{R}^{k}$ aligns with its blow-down limit. 
    Therefore, it suffices to prove that the blow-down limit of $M_t^{\infty}$ is $\Gamma_t$. 
    Note that $M_t^{\infty}$ may not coincide with $\Gamma_t$, but the axis $\mathbb R^k$ must align.
    
    We claim that 
    \begin{align}\label{Compare-mu-with-cylindrical-scale}
        L:= \limsup\limits_{i\to\infty} \lambda_i \rho_i  <\infty
    \end{align}
    where we recall that $\lambda_i$ is the scaling sequence leading to the limit flow $M_t^\infty.$
    If this were not the case, then there would be a subsequence $i_j$ such that $\lim\limits_{j\rightarrow\infty}\lambda_{i_j}\rho_{i_j} = \infty$. 
    By \eqref{Compare-regularity-scale-with-cylindrical-scale}, we have 
    $\lim\limits_{j\rightarrow\infty} \frac{\lambda_{i_j}}{H\pr{{x_{i_j},t_{i_j}}}}  
    = \infty.$
    In particular, the limit of $M_t^{i_j} = M_t^{\pr{x_{i_j},t_{i_j}, \lambda_{i_j}}}$ has $H(0,0)= 0$. On the other hand, the limit coincides with  $M_t^{\infty}$ and should be an $\ell$-cylinder, so the mean curvature should not vanish anywhere. We thus get a contradiction and proved \eqref{Compare-mu-with-cylindrical-scale}.

    From \eqref{Cepsilon-close-to-Gamma}, \eqref{Compare-regularity-scale-with-cylindrical-scale}, and \eqref{Compare-mu-with-cylindrical-scale}, we obtain that 
    $\frac{1}{L} M_{-L^2}^i = M_{-1}^{\pr{x_i,t_i,L^{-1} \lambda_i}}$ is $C\varepsilon$-close to $\Gamma_{-1}$ in a subsequence. 
    Therefore, a subsequence of $\frac{1}{L}M_{L^2t}^i$ converges to a $k$-cylinder that is $C\varepsilon$ close to $\Gamma_{t}$ at time $t = -1$. 
    This implies that $\frac{1}{L}M_{L^2t}^{\infty}$ is a $k$-cylinder which is $C\varepsilon$-close to $\Gamma_{-1}$ at time $-1$. 
    In view of the the evolution of cylinders, the blow-down limit of $M_t^{\infty}$ is $C\varepsilon$-close to $\Gamma_t$ at time $-1$.
    Since $\varepsilon$ is arbitrary, we conclude that the blow-down limit of $M_t^{\infty}$ coincides with $\Gamma_t$. 
    This finishes the proof.
\end{proof}

\begin{remark}\label{rmk:non-deg-arnold-thom}
    Theorem~\ref{lem:nondeg-unique} does not assume that the flow is mean convex.
    Thus, one can see the arguments in Section~\ref{sec:4.2} and Section~\ref{sec:4.3} also work when the flow is not globally mean convex, since the non-degenerate condition guarantees a mean convex neighborhood near such a singularity.
    The only issue is that one has to use the definition of flow lines given in the context of general flows, cf. Section~\ref{sec:general-flow}.
\end{remark}

Before we end this section, we note that in \cite{SX4}, Sun--Xue defined a different notion of non-degenerate cylindrical singularities and proved that they are generic. 
Their non-degeneracy condition
implies the one defined in Definition~\ref{def:nondeg}.
Also, there are higher dimensional dumbbells having non-degenerate cylindrical singularities.
In general, examples of higher dimensional cylindrical singularities were studied in \cite{W13}.


\subsection{Uniqueness of limit points}
\label{sec:4.2}
With the uniqueness result for limit flows of non-degenerate singularities (Theorem~\ref{lem:nondeg-unique}), we will first see how the \L ojasiewicz theorem, that is, the uniqueness of limit points of a flow line, follows from it.

\begin{thm}[\L ojasiewicz for non-degenerate cylindrical singularities]
\label{thm-L2}
    If $\gamma$ has a limit point which is a non-degenerate cylindrical singularity,
    then this limit point is the unique limit point of~$\gamma.$
\end{thm}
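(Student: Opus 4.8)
The plan is to reduce to the \L ojasiewicz inequality of Colding--Minicozzi \cite{CM19}, exactly as in Case~2 of the proof of Theorem~\ref{thm:AT-neck}. Normalize the given limit point to $(0,0)$, let $\Gamma_t$ be the tangent flow there (a $k$-cylinder with axis the $k$-plane $V$), and write $\Pi_{\rm axis}$ for the orthogonal projection of $\bb R^{n+1}$ onto $V$ and $\Pi:={\rm Id}-\Pi_{\rm axis}$. Since $M_t$ is mean convex, $u$ is $C^{1,1}$ on $\Omega$, $\gamma$ is an honest gradient flow line, and the set $K$ of its limit points is a compact connected subset of the singular set $\mathcal S$ on which $u\equiv 0$ (Claim~\ref{claim:grad-curve}). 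We must show $K=\set{0}$.

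The crux is a uniform cylindricality statement along $\gamma$: for every $\varepsilon>0$ there is $\tau<0$ so that for all $t\in[\tau,0)$ the curvature-normalized flow $M^{(\gamma(t),t,H(\gamma(t),t))}_t$ is $\varepsilon$-close to a $k$-cylinder whose axis is $C\varepsilon$-close to $V$. This is where Theorem~\ref{lem:nondeg-unique} enters. One argues by contradiction and Brakke compactness: a bad sequence $t_i\to 0$, with $\gamma(t_i)\to q\in K$, would produce a non-trivial special limit flow at $q$ that is not such a cylinder. Since $q$ need not be $0$, a preliminary step is needed: every $q\in K$ is a non-degenerate $k$-cylindrical singularity whose axis is $C\varepsilon(|q|)$-close to $V$, with $\varepsilon(|q|)\to 0$ as $q\to 0$. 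This should follow from upper semicontinuity of the Gaussian density together with the entropy gaps of Lemma~\ref{lem:ent} --- which force the tangent flow at any singular point near $0$ to be modeled on a $k$-cylinder, the only competing possibility (a plane) being excluded since $K\sbst\mathcal S$ --- combined with the fact that at a fixed scale the flow near $0$ is close to $\Gamma$, so the axis is nearly $V$, and the propagation of the non-degeneracy hypothesis to singular points near $(0,0)$. Granting this, Theorem~\ref{lem:nondeg-unique} applied at each $q\in K$ --- together with the scheme of its proof, which shows the flow is cylindrical with axis $V$ at all scales between the curvature scale and a fixed scale around points near $(0,0)$ --- upgrades to the uniform statement.

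With uniform cylindricality in hand, the argument of Case~2 of Theorem~\ref{thm:AT-neck} runs verbatim. Since $\gamma'$ equals the mean curvature vector $\h$ of $M_t$ at $\gamma(t)$ and mimics that of the model $k$-cylinder, one gets $\abs{\Pi_{\rm axis}\circ\gamma'(t)}=o\pr{\abs{\gamma'(t)}}$ and $\tfrac1C(-t)^{-1/2}\le H(\gamma(t),t)\le C(-t)^{-1/2}$; integrating gives $\abs{\Pi_{\rm axis}\circ\gamma(t)}=o\pr{\sqrt{-t}}$ and $\abs{\gamma(t)}\le C\sqrt{-t}$, while the geometry of the $k$-cylinder gives $\abs{\Pi\circ\gamma(t)}\ge\tfrac1C\sqrt{-t}$. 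Hence $\abs{\gamma(t)}\to0$ and $\gamma(t)\in C_d:=\set{x\in\Omega:\abs{\Pi_{\rm axis}(x)}\le d\,\abs{\Pi(x)}}$ for every $d>0$ once $t$ is close enough to $0$. Fixing $d>0$ and the corresponding $\delta>0$ from \cite[Theorem~1.3]{CM18}, for which $u$ is $C^2$ with $0$ its only critical point in $C_d\cap B_\delta$, we conclude $\gamma(t)\in C_d\cap B_\delta$ for $t$ near $0$; the \L ojasiewicz inequality \cite[Theorem~2.1]{CM19} then holds there, so $\gamma$ has finite length and converges to a single limit point, which must be $0$. Thus $K=\set{0}$.

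The main obstacle is the uniform cylindricality along the entire trajectory --- equivalently, controlling the blow-ups of $M_t$ at \emph{every} limit point of $\gamma$, not only at the prescribed singularity $(0,0)$. Unlike the neck case (Theorem~\ref{thm-L}), where the one-dimensional structure theorem \cite{CM162} allows a direct argument, here one genuinely needs Theorem~\ref{lem:nondeg-unique} and the density-gap analysis above; the delicate point is that the Gaussian density is only upper semicontinuous, so the matching lower bound required to pin down the type and axis of the singularity at each $q\in K$ must be extracted from the cylindrical structure of $M_t$ near $V$, in the spirit of Part~2 of the proof of Theorem~\ref{lem:nondeg-unique}.
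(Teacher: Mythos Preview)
Your approach has a real gap at the step you yourself flag as the main obstacle. You want uniform cylindricality along $\gamma$ on a full interval $[\tau,0)$, and your contradiction argument extracts a special limit flow at some $(q,0)$ with $q\in K$. But the non-degeneracy hypothesis is stated only at the given limit point $(0,0)$; it says nothing about special limit flows at other points of $K$. The density and entropy considerations you sketch can at best constrain the \emph{tangent} flow at nearby singular points --- and even there only from above, since Gaussian density is upper semicontinuous, so you do not get the matching lower bound that would pin down the type as $k$-cylindrical --- but they cannot produce the much stronger statement that \emph{all} special limit flows at $(q,0)$ are cylinders. Without a classification of ancient non-collapsed solutions for general $k$ (which is precisely what the non-degeneracy hypothesis is meant to replace), there is no mechanism to propagate non-degeneracy from $0$ to the rest of $K$, and your argument stalls.

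The paper sidesteps this entirely with a bootstrap argument that never leaves the singularity $(0,0)$. Given $t_i\nearrow 0$ with $\gamma(t_i)\to 0$, set $\delta_i=2|\gamma(t_i)|+3\sqrt{n|t_{i-1}|}$ and let $s_i$ be the first backward time at which $(\gamma(\cdot),\cdot)$ exits the parabolic ball $B_{\delta_i}(0)\times[-\delta_i^2,0)$. Because $\gamma$ stays inside that ball on $[s_i,t_i]$ and $\delta_i\to 0$, any blow-up taken along $\gamma|_{[s_i,t_i]}$ is a special limit flow \emph{at $(0,0)$ itself}, so Theorem~\ref{lem:nondeg-unique} applies directly and yields $\partial_t H\approx H^3/(n-k)$ there. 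Integrating gives $|\gamma(s_i)|\le |\gamma(t_i)|+\sqrt{2n|s_i|}$, which, compared against the exit condition $|\gamma(s_i)|=\delta_i$ or $s_i=-\delta_i^2$, forces $s_i<t_{i-1}$. Hence $\gamma([t_{i-1},t_i])\subset B_{\delta_i}(0)$ for all large $i$, and since $\delta_i\to 0$ one concludes $\gamma(t)\to 0$. No appeal to the $C^2$-region $C_d$ or to the \L ojasiewicz inequality of \cite{CM19} is needed for the uniqueness of the limit point; those enter only later, in Theorem~\ref{thm:AT-cyl}. Incidentally, even granting your uniform cylindricality, the finite-length bound $\int_t^0 H\le C\sqrt{-t}$ already gives convergence of $\gamma$, so the detour through \cite[Theorem~2.1]{CM19} in your last paragraph is superfluous for Theorem~\ref{thm-L2}.
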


The key idea is that Theorem~\ref{lem:nondeg-unique} guarantees the existence of a ``canonical neighborhood,'' inside which every regular point behaves nicely, that is, like one on a cylindrical flow.
This prevents the occurrence of multiple limit points.
As mentioned in Remark~\ref{rmk:non-deg-arnold-thom}, since Theorem~\ref{lem:nondeg-unique} does not require mean convexity, we can also extend Theorem~\ref{thm-L2} to a general flow with a non-degenerate cylindrical singularity if we use a general notion of flow lines, cf. Definition~\ref{def:new-flow-line}.

\begin{proof}
	We assume the origin is a limit point of $\gamma$ and is a singularity of the flow modeled on a $k$-cylinder.
    With a reparametrization (using the time of the flow) and a translation, we assume $\gamma(t_i)\to 0$ for some $t_i\to 0.$ 
	Our goal is to show $\gamma(t)\to 0$ as $t\to 0.$
	We will prove this by showing that $\gamma$ behaves like a cylindrical flow lines for $t\in[s_i, t_i]$ for some $s_i<t_i,$
	and use this cylindricality to show that we can in fact take $s_i=t_{i-1}.$	
	Without loss of generality, we may assume that $\{t_i\}$ is a strictly increasing sequence.

	Fix any $\delta > 0$, for $i\geq 2$, we let
	\begin{align}\label{si-def}
		s_i = s_i(\delta) = \inf \set{\tau \in[t_{i-1}, t_i]
			: |\gamma(t)| \le |\gamma(t_i)| 
			+\delta
			\text{ for all } t \in [\tau, t_i]}.
	\end{align}
	We claim that the flow behaves like a shrinking cylinder along $\gamma$ when  $t\in[s_i,t_i].$
	\begin{claim}\label{Loj;claim}
		Given any $\varepsilon>0,$ there exists $\delta_0 = \delta_0(\varepsilon) > 0$ with the following property. 
        If $\delta \leq \delta_0$, then for all large $i$ and $t\in[s_i(\delta), t_i],$ we have $H\pr{\gamma(t),t} > 0$
		and
		\begin{align}\label{lojasiewicz;H-time-derivative}
			\sup\limits_{t\in [s_i(\delta),t_i]} \abs{\frac{\partial_t H}{H^3}\pr{\gamma(t),t} - \frac{1}{n-k}} \leq \varepsilon.
		\end{align}
	\end{claim}
	
	\begin{proof}[Proof of Claim~\ref{Loj;claim}]
		If the claim were false, then there would exist $\varepsilon>0,$ $\delta_i\rightarrow 0$, $j_i\geq i$  and $\tau_i\in [s_{j_i}(\delta_i),t_{j_i}]$ such that 
        \begin{align}\label{Loj;contradiction;H-diff}
	    \abs{\frac{\partial_t H}{H^3}\pr{\gamma(\tau_i),\tau_i)} - \frac{1}{n-k}} > \varepsilon
    \end{align}
    or
    \begin{align}\label{Loj;contradiction;H=0}
	    H(\gamma(\tau_i),\tau_i) = 0
    \end{align}
		 for each $i$. 
		We may pass to a subsequence (still denoted by $i$) so that $j_i$ is strictly increasing in $i$.
        To get a contradiction, we consider
    \begin{align*}
	    M_t^{i} 
        = M_t^{\pr{\gamma(\tau_i), \tau_i, H(\gamma(\tau_i), \tau_i)}}. 
    \end{align*} By definition we have $\tau_i\rightarrow 0$. Moreover, 
	\begin{align*}
		|\gamma(\tau_i)| \leq |\gamma(t_{j_i})| + \delta_i \rightarrow 0
	\end{align*}
	Thus $\gamma(\tau_i)\rightarrow 0$, which is a limit point, so $H(\gamma(\tau_i), \tau_i)\rightarrow \infty$ because $\n u(0)=0.$  
		By Brakke's compactness theorem \cite{B78} (cf. \cite{HK1}), after passing to a subsequence (still denoted by $i$), $M_t^{i}$ converges to an integral Brakke flow $M_t^{\infty}.$ 
		By the non-degeneracy assumption and Theorem~\ref{lem:nondeg-unique}, $M_t^{\infty}$ is a shrinking $k$-cylinder with $H(0,0) = 1$. 
		By the evolution of cylinders, we know that
		$\partial_t H = \frac{H^3}{n-k}$ on $M_t^{\infty}$ for $t<\sqrt{\frac{n-k}{2}}$. 
		Brakke's regularity theorem \cite{B78} (cf. \cite{W05, HK1}) guarantees that the convergence is smooth on compact sets in $\mathbb{R}^{n+1}\times (-\infty,0]$. 
    Thus, for all large $i,$ we have
 	\begin{align*}
 	      \abs{\frac{\partial_t H}{H^3}\pr{\gamma(\tau_i), \tau_i} -\frac{1}{n-k}}
          \le \varepsilon
            \text{ and } H\pr{\gamma(\tau_i), \tau_i} > 0.
 	\end{align*}
    This means that neither \eqref{Loj;contradiction;H-diff} nor \eqref{Loj;contradiction;H=0} holds for large $i$, and we get a contradiction.
	\end{proof}

	We continue the proof of Theorem~\ref{thm-L2}. 
	We specifically choose $\varepsilon=1/n^2$ and take $\delta_0$ to be given by Claim~\ref{Loj;claim}. 
    We then work with any fixed $\delta < \delta_0$ and write $s_i=s_i(\delta).$
    Then we have
	\begin{align*}
		\partial_t H\pr{\gamma(t),t}
		\geq \frac{1}{n}H^3\pr{\gamma(t),t}
		\text{ and }\quad H\pr{\gamma(t),t}> 0 
	\end{align*}
	when $t \in [s_i,t_i]$ for all large $i.$
	This differential inequality and the fact that $H(\gamma(t_i),t_i) > 0$ imply 
	\begin{align*}
		\frac{d}{dt} H(\gamma(t),t)^{-2} \leq -\frac{2}{n}
	\end{align*}
	when $t \in [s_i,t_i].$
	Integrating from $t$ to $t_i,$ we obtain that
	\begin{align*}
		H(\gamma(t),t)^{-2} 
		\geq \frac{2(t_i-t)}{n} 
	\end{align*}
	for all $t\in [s_i,t_i]$. 
	Using this inequality and the differential equation for $\gamma,$ that is, $|\gamma'(t)| = H(\gamma(t),t)$ (cf. Remark~\ref{rmk:gamma-para}), we can integrate along $t\in [s_i,t_i]$ and obtain
	\begin{align}\label{Loj;dist;estimate}
		|\gamma(s_i)| 
		\leq |\gamma(t_i)| + \int_{s_i}^{t_i}\sqrt{\frac{n}{2(t_i-t)}} dt
		=  |\gamma(t_i)| + \sqrt{2n(t_i-s_i)}
        < |\gamma(t_i)| + \delta
	\end{align}
	where we use that for all large $i$, we have 
	\begin{align*}
		\sqrt{2n(t_i-s_i)} \leq \sqrt{2n(t_i-t_{i-1})} <\delta.
	\end{align*}
    On the other hand, by the definition of $s_i,$ if $s_i\neq t_{i-1},$ then $|\gamma(s_i)| = |\gamma(t_i)| + \delta.$
    This contradicts \eqref{Loj;dist;estimate}.
    Thus, we deduce that $s_i =t_{i-1}$ for all large $i$. 
	This then implies
	\begin{align*}
		\gamma([t_{i-1},t_i])\subset B_{|\gamma(t_i)| + \delta}(0) 
	\end{align*}
	for all large $i$. 
	Because $|\gamma(t_i)| \to 0,$ we have
	\begin{align*}
		\gamma\pr{[t_{i-1},t_i]}\subset B_{2\delta}(0)
	\end{align*}
	for all large $i$. 
	Since $t_i\nearrow 0,$ this means that
	\begin{align*}
		\gamma([t_{i},0))\subset B_{2\delta}(0)
	\end{align*}
	for all large $i$. 
	Because $\delta < \delta_0$ can be arbitrarily small, we conclude that
	$\lim\limits_{t\rightarrow 0^-}\gamma(t) = 0.$
	This finished the proof of the uniqueness of limit points of $\gamma.$
\end{proof}

\begin{remark}
    \label{rmk:another-proof-of-L}
    The proof of Theorem~\ref{thm-L2} may lead to another proof for Theorem~\ref{thm-L} when the flow line is cylinder-type.
    Without knowing Theorem~\ref{thm-L} a priori, one may still prove Theorems~\ref{cylindrical-limit-flow} and~\ref{bowl-limit-flow}.
    The possibility of having more than one limit point may complicate the proof, requiring the \L ojasiewicz estimate by Colding--Minicozzi in the argument again.
    After the uniqueness of limit flows is proven in this way, to prove the uniqueness of limit points, the case of cylinder-type flow lines can be treated as in the proof of Theorem~\ref{thm-L2}, while the case of bowl-type flow lines can then be dealt with directly, as our treatment of Case 1 in the proof of Theorem~\ref{thm:AT-neck}.
\end{remark}

\begin{remark}\label{rmk:sec34-cyl}
    The techniques used to prove Claim~\ref{Loj;claim} share similarities with those in Lemma~\ref{lem:cylinder-type-case} (cf. equation~\eqref{main;gamma-distance-up-bound}), as both rely on the differential inequalities for $H$ derived from the Jacobi equation and approximation by a cylinder.
    A key difference, however, is that in Lemma~\ref{lem:cylinder-type-case}, the uniqueness of limit points is already known, and equation~\eqref{main;gamma-distance-up-bound} is used to establish the angle estimate~\eqref{gamma(t)-in-Cd}. 
    In contrast, in the proof of Theorem~\ref{thm-L2}, the cylindrical estimate must be handled more delicately due to the absence of such uniqueness.
\end{remark}


\subsection{Uniqueness of limit tangents}
\label{sec:4.3}
We will finish the proof of the Arnold--Thom conjecture for non-degenerate singularities.

\begin{thm}[Arnold--Thom for non-degenerate singularities]\label{thm:AT-cyl}
    Let $M_t$ be a closed mean convex mean curvature flow in $\bb R^{n+1}$ and let $u$ be its arrival time function.
    If $\gamma$ is a gradient flow line for $u$ with a limit point being a non-degenerate cylindrical singularity,
    then the limit point of $\gamma$ is unique and the unit tangents of $\gamma$ converge.
\end{thm}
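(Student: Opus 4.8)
The plan is to reduce Theorem~\ref{thm:AT-cyl} to the work of Colding--Minicozzi \cite{CM18, CM19}, exactly as in Case~2 of the proof of Theorem~\ref{thm:AT-neck}, but now using the higher-dimensional uniqueness-of-limit-flows statement (Theorem~\ref{lem:nondeg-unique}) in place of the neck-specific Theorem~\ref{cylindrical-limit-flow}. The uniqueness of the limit point is already Theorem~\ref{thm-L2}, so only the convergence of unit tangents remains. After a translation and a time shift we assume the limit point is $(0,0)$ and it is modeled on a $k$-cylinder $\Gamma_t$ with axis $\set 0\times\bb R^k$; by Theorem~\ref{thm-L2} we have $\gamma(t)\to 0$ as $t\to 0^-$, so Proposition~\ref{prop:convergence}-type rescalings are available (here, since at a non-degenerate cylindrical singularity there is a mean convex neighborhood by Theorem~\ref{lem:nondeg-unique} and the arguments in \cite{SS}, the flow near $(0,0)$ is mean convex and the arrival time function is defined there; cf.\ Remark~\ref{rmk:non-deg-arnold-thom}).

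\textbf{Step 1: the blow-up along $\gamma$ is a shrinking $k$-cylinder parallel to $\Gamma_t$.} By Theorem~\ref{lem:nondeg-unique}, every non-trivial special limit flow at $(0,0)$ --- in particular every limit flow obtained by rescaling at points $(\gamma(t_i),t_i)$ with $t_i\nearrow 0$ --- is modeled on a $k$-cylinder with axis parallel to $\set 0\times\bb R^k$. (It is a genuine shrinking cylinder, not a higher-multiplicity object, because the flow is mean convex near the singularity and so by Theorem~\ref{thm:mcMCF-u}(3) tangent flows are multiplicity one, and Brakke regularity upgrades the weak convergence to smooth convergence away from $t=0$.) Thus there is a unique line direction $e_0\in\set 0\times\bb R^k$... more precisely, the axis is a fixed $k$-plane, and the key point for us is that the mean curvature vector $\h$ of $M_t$ at $\gamma(t)$ points, asymptotically, in the direction of the \emph{spherical} factor of that cylinder.

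\textbf{Step 2: quantitative cylindrical behavior along $\gamma$.} Letting $\Pi_{\rm axis}$ be the orthogonal projection onto the $k$-plane $\set 0\times\bb R^k$ and $\Pi := {\rm Id}-\Pi_{\rm axis}$, the smooth convergence in Step~1 gives, for some $\tau<0$ and all $t\in[\tau,0)$: $H(t):=H(\gamma(t),t)>0$; the parabolic ODE comparison $\frac{1}{2(n-k)}H^3\le \partial_t H\le 2H^3$ (from $\partial_t H=\Delta H+|A|^2H$ and the cylinder model, where $|A|^2\to \frac{1}{2(n-k)}H^2$ suitably normalized); and $|\Pi_{\rm axis}\circ\gamma'(t)| = o(|\gamma'(t)|)$, since $\gamma'=\h$ and $\h$ becomes arbitrarily close to the radial direction of the spherical factor. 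Integrating the $H$ inequality as in \eqref{main;gamma-distance-up-bound} yields $c\sqrt{-t}\le H(t)^{-1}\le C\sqrt{-t}$, hence $|\gamma(t)|\le C\sqrt{-t}$; integrating the $o$-estimate gives $|\Pi_{\rm axis}\circ\gamma(t)| = o(\sqrt{-t})$; and the geometry of $\Gamma_t$ forces $|\Pi\circ\gamma(t)|\ge c\sqrt{-t}$. Together these place $\gamma(t)$, for $t\in[\tau,0)$, inside every cone $C_d := \set{x:\ |\Pi_{\rm axis}(x)|\le d\,|\Pi(x)|}$.

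\textbf{Step 3: invoke the $C^2$-regularity and the \L ojasiewicz inequality in the cone, then conclude.} For each fixed $d>0$, by \cite[Theorem~1.3]{CM18} there is $\delta>0$ such that $u$ is $C^2$ and has $0$ as its unique critical point in $C_d\cap B_\delta$. As in the proof of Theorem~\ref{thm:AT-neck}, the arguments of \cite[Theorems~2.1, 2.2, 3.3]{CM19} apply verbatim in this region since the only input they use is the $C^2$-regularity of $u$ near a generalized cylindrical singularity, and the summability estimate \cite[Equation (3.17)]{CM19} holds for any rescaled mean curvature flow converging to a generalized cylinder, in particular ours (using Step~1). This yields the finite-length/\L ojasiewicz-type control on the curve of secants and the convergence of $\frac{\gamma'(t)}{|\gamma'(t)|}$ as $t\to 0^-$. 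Combined with Theorem~\ref{thm-L2} for the uniqueness of the limit point, this proves Theorem~\ref{thm:AT-cyl}, and hence (via the criteria in Section~\ref{sec:intro}) the two-convex and low-entropy cases and Theorem~\ref{thm:nondeg-cyl}.

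\textbf{Main obstacle.} The delicate point is Step~2 --- specifically, that the \emph{axial component of the motion is lower order}, i.e.\ $|\Pi_{\rm axis}\circ\gamma'| = o(|\gamma'|)$. One does not get this from the single limit-flow statement alone: it must hold \emph{uniformly} as $t\to 0^-$, which requires that the convergence of the rescaled flows $M^{(\gamma(t),t,H(t))}_t$ to a shrinking $k$-cylinder parallel to $\Gamma$ be uniform in $t$ in an appropriate sense (any sequential limit is such a cylinder, and the space of normalized $k$-cylinders parallel to $\Gamma$ through the origin is compact, so a standard contradiction/subsequence argument upgrades this to the needed uniform statement). Once this uniformity is in hand, the cone-containment and the reduction to \cite{CM18, CM19} are routine, paralleling Case~2 of Theorem~\ref{thm:AT-neck}.
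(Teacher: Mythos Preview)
Your proposal is correct and follows the same approach as the paper: invoke Theorem~\ref{thm-L2} for uniqueness of the limit point, then use Theorem~\ref{lem:nondeg-unique} to ensure every limit flow along $\gamma$ is a $k$-cylinder with axis parallel to that of the tangent flow, and finally run the Case~2 argument of Theorem~\ref{thm:AT-neck} verbatim (cone containment via the $H$-ODE and the $o$-estimate on the axial component, then \cite[Theorem~1.3]{CM18} and the \L ojasiewicz/summability machinery of \cite{CM19}). The paper's own proof is a two-line pointer to exactly these ingredients; your write-up simply spells out the details, including the subsequence/compactness upgrade to uniformity that justifies $|\Pi_{\rm axis}\circ\gamma'| = o(|\gamma'|)$, which the paper leaves implicit.
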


\begin{proof}
    The uniqueness of limit points is addressed in Theorem \ref{thm-L2}.
    For the uniqueness of limit tangents,
    by Theorem \ref{lem:nondeg-unique}, at a non-degenerate cylindrical singularity, all the limit flows are the same type of cylinders with parallel axes.
    Thus, applying Lemma~\ref{lem:cylinder-type-case} finishes the proof.
\end{proof}


\section{\bf General flows with neck singularities}
\label{sec:general-flow}

In this section, we discuss neck singularities even if a mean curvature flow is not mean convex.
In this case, the flow may fatten, so it is subtle to determine whether the flow has a neck singularity.
We follow the treatments in \cite{CHHW} (cf. \cite{I92, HW}).

\subsection{Outer flows}
Let $K$ be an arbitrary closed set in $\bb R^{n+1}.$
Then, by the works of \cite{ES, CGG, I94}, there exists a unique level set flow $F_t(K)$ starting from $K.$
It is the maximal family of closed sets satisfying the avoidance principle (when compared with closed smooth mean curvature flows) and starting with $K.$

Let $M$ be a closed smooth hypersurface in $\bb R^{n+1},$ and let $K$ be the compact domain with $\bd K = M.$
The level set flows we will consider are
\begin{align*}
    K(t) &:= F_t(K)
    \text{ and }\\
    K'(t) &:= F_t\pr{\ovl{\bb R^{n+1}\setminus K}}
\end{align*}
starting from $K$ and $\ovl{\bb R^{n+1}\setminus K}.$
Let $\mathcal K$ and $ \mathcal K'$ be the corresponding space-time track, and we define
\begin{align*}
    M_t &:= \set{x\in\bb R^{n+1}: (x,t)\in\bd\mathcal K}
    \text{ and }\\
    M_t' &:= \set{x\in\bb R^{n+1}: (x,t)\in\bd\mathcal K'},
\end{align*}
called the outer and inner flows starting from $M.$
By \cite[Proposition~A.3]{HW}, both $M_t$ and $M_t'$ are weak set flows.
Then we can follow \cite[Definition~1.16]{CHHW} to define neck singularities for such flows.

\begin{definition}[Neck singularity]
    Let $M$ be a closed embedded hypersurface in $\bb R^{n+1}.$
    We say $(x,t)$ is an inward neck singularity of the evolution of mean curvature flow starting from $M$ if the rescaled flows $M_t^{x,t,\lambda_i}$ converge locally smoothly to a shrinking cylinder, where $M_t$ is the outer flow starting from $M.$
\end{definition}

One can similarly define what an outward neck singularity is using the inner flow of $M.$
The theories for inner and outer flows are parallel, and in the rest of this section, we only state and prove the results for outer flows and inward neck singularities.

Note that when the usual mean curvature flow starting from a closed hypersurface $M$ is smooth, the ordinary mean curvature flow then coincides with both the inner and outer flows before the first singular time. 
(That's why we use the same notation $M_t$ to denote an outer flow.)
When they are different, we will look at one of them to discuss neck singularities as above.
At the same time, we also have to say what a flow line is.
To this end, we recall \cite[Theorem~B.3]{HW} which says that there exists a Brakke flow $\mu_t$ starting from $M$ whose support is given by $M_t.$
We will write the generalized mean curvature vector of $\mu_t$ by $\h_{M_t},$ using which we can define the notion of flow lines in this weak setting.

\begin{definition}
    [Flow lines]
    \label{def:new-flow-line}
    Let $M$ be a closed embedded hypersurface in $\bb R^{n+1}$ and let $M_t$ be the outer flow starting from $M.$
    We say that a Lipschitz curve $\gamma\colon[t_1, t_2]\to\bb R^{n+1}$ is a flow line of $M_t$ if it satisfies
    \begin{align*}
        \gamma'(t) = \h_{M_t}\pr{\gamma(t)}
    \end{align*}
    for $t\in [t_1, t_2]$ such that $\h_{M_t}\pr{\gamma(t)}$ exists.
\end{definition}

Note that, a priori, we know that $\h_{M_t}$ is defined almost everywhere.
This allows us to have the definition above, but we do not have any existence and uniqueness theorems as in the usual ordinary differential equation theory.
In fact, a flow line can have strange behavior, especially at those points where the mean curvature of the flow is not defined.
However, the mean convex neighborhood theorem \cite[Theorem~9.1]{CHHW} implies that near an inward neck singularity, the flow is mean convex. 
Therefore, close to the neck singularity, $\h_{M_t}\pr{\gamma(t)}\neq 0$ for $t<\infty,$ so we can revert to the classical setting when approaching a neck singularity.
The rest of the proofs in Section~\ref{sec:AT-neck} are valid, and we get the \L ojasiewicz theorem and Arnold--Thom conjecture in the case of outer flows.

\begin{thm}\label{thm:AT-general}
    Let $M_t$ be the outer flow starting from a closed smooth hypersurface $M$ in $\bb R^{n+1}$ and let $(0,0)$ be an inward neck singularity of $M_t.$
    If $\gamma$ is a gradient flow line of $M_t$ with a limit point,
    then the limit point is unique and the unit tangents of $\gamma$ converge.
\end{thm}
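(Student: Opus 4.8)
The plan is to reduce Theorem~\ref{thm:AT-general} to the already-established mean convex case, Theorem~\ref{thm:AT-neck}, by exploiting the mean convex neighborhood theorem of Choi--Haslhofer--Hershkovits--White. First I would recall that by \cite[Theorem~9.1]{CHHW}, since $(0,0)$ is an inward neck singularity of the outer flow $M_t$, there is a space-time neighborhood $U$ of $(0,0)$ in which $M_t$ is a smooth, mean convex mean curvature flow with a unit regularity estimate; in particular the generalized mean curvature vector $\h_{M_t}$ is the classical one on $U$, and $H_{M_t}>0$ there. Thus on $U$ the flow coincides with an ordinary smooth mean convex flow, and one may introduce the arrival time function $u$ on the (spatial slice of the) mean convex region, solving \eqref{arv-eq}, with $H_{M_t}=1/|\n u|$ on the regular set.

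Next I would address the behavior of the flow line $\gamma$. Suppose $\gamma$ has a limit point $p$; a priori $\gamma$ is only Lipschitz and $\h_{M_t}(\gamma(t))$ need only exist a.e. The key point is that once $\gamma$ enters the mean convex neighborhood $U$, the defining equation $\gamma'(t)=\h_{M_t}(\gamma(t))=\n u/|\n u|^2(\gamma(t))$ holds in the classical sense for all $t$ in that time range (since $\h_{M_t}$ is continuous and nonvanishing there), so on $U$ the curve $\gamma$ is an honest gradient flow line of $u$ in the sense of Remark~\ref{rmk:gamma-para}. It remains to check that the limit point $p$ must in fact be $(0,0)$, i.e. the neck singularity: this follows because $p$, being a limit point of a flow line that stays in $\overline{U}$ near the end, lies in $\overline{U}$; if $p$ were a regular point then $\h_{M_t}$ would not vanish at $p$ and $\gamma$ could not accumulate there (the flow line would pass through transversally), so $p$ is a singular point of the flow in $U$, hence a point where the mean convex neighborhood structure applies. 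Shrinking $U$ if necessary we may assume $p=(0,0)$.

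With these reductions in place, the remainder is a direct citation of the mean convex theory. On $U$ the function $u$ is $C^{1,1}$ (Theorem~\ref{thm:mcMCF-u}), its tangent flow at $p$ is a multiplicity one neck, and $\gamma$ restricted to times near $0$ is a gradient flow line of $u$ with $p$ as a limit point. Applying Theorem~\ref{thm-L} gives uniqueness of the limit point, and then Proposition~\ref{prop:convergence}, the uniqueness of limit flows (Theorem~\ref{cylindrical-limit-flow} and Theorem~\ref{bowl-limit-flow}), and finally Theorem~\ref{thm:AT-neck} give the convergence of the unit tangents $\gamma'(t)/|\gamma'(t)|$. I would note explicitly that every step used in Section~\ref{sec:AT-neck}---the structure theorem for the singular set near a neck \cite{CM162}, the $C^2$-away-from-the-axis estimates \cite{CM18}, and the optimal \L ojasiewicz inequality \cite{CM19}---is local near the neck singularity, hence valid inside $U$, so no global mean convexity of $M_t$ is needed.

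The main obstacle, and the point deserving the most care, is the passage from the a.e.-defined weak flow line to a classical one and the verification that a flow line cannot ``escape'' or accumulate at a regular point or at a non-neck singularity before reaching $(0,0)$: one must argue that the mean convex neighborhood $U$ is genuinely forward-invariant for the curve in the relevant time range (using that $\gamma$ is continuous and $\overline{U}$ is closed, together with the fact that $\h_{M_t}$ points ``inward'' along the mean convex flow), and that the Lipschitz regularity of $\gamma$ together with the smoothness of $\h_{M_t}$ on $U$ upgrades $\gamma$ to a $C^\infty$ integral curve there. The outward-neck / inner-flow case is identical after reversing the relevant normal orientations, as already remarked in the text.
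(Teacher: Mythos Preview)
Your proposal is correct and follows exactly the approach the paper takes: invoke the mean convex neighborhood theorem \cite[Theorem~9.1]{CHHW} to reduce to the classical mean convex setting near the neck, then cite the results of Section~\ref{sec:AT-neck} (Theorem~\ref{thm-L} and Theorem~\ref{thm:AT-neck}) verbatim. Your write-up is in fact more careful than the paper's own discussion, which dispatches the whole reduction in two sentences before stating the theorem; your explicit remarks about upgrading the a.e.-defined Lipschitz flow line to a classical $C^\infty$ integral curve inside $U$, and about why a limit point must be singular, fill in details the paper leaves implicit.
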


In fact, with Definition~\ref{def:new-flow-line}, the \L ojasiewicz theorem and the Arnold--Thom conjecture also hold for a non-degenerate cylindrical singularity in a general mean curvature flow even without the mean convexity condition.
See Remark~\ref{rmk:non-deg-arnold-thom}.

We remark that gradient flow lines in the context of Theorem~\ref{thm:AT-general} may not be globally $C^1.$
However, the theorem only cares about the behaviors of flow lines near inward neck singularities.
In a small neighborhood of an inward neck singularity, the flow is a mean convex level set flow so a gradient flow line can be interpreted in the classical sense.
This again manifests the principle proposed by Colding--Minicozzi \cite{CM19} that solutions with low regularity can behave like analytic functions.

\subsection{Flows of two-spheres and generic hypersurfaces}
We mention three situations in which the general framework developed in the previous section applies directly without the issue of fattening.
All flows are assumed to be embedded.
The first case is when a flow starts from a two-sphere, and the second and the third cases are when a flow starts from a generic closed hypersurface.

With the works of \cite{W95, Br16, HW, CHH, BK23}, level set flows starting from a two-sphere are understood well.
In fact, this was already discussed in \cite[Proof of Theorem~1.12]{CHH} assuming the multiplicity one conjecture, which was recently proven by Bamler--Kleiner \cite{BK23} (cf. \cite[Corollary 1.3]{BK23}).
In particular, the level set flow starting from a two-sphere does not fatten.
Thus, starting from a smooth two-sphere, its inner flow, its outer flow, and its level set flow will all be the same.
Theorem~\ref{thm:AT-general} then implies the following uniqueness results (\L ojasiewicz theorem and Arnold--Thom conjecture).

\begin{cor}
    Let $M_t$ be a mean curvature flow starting from a two-sphere in $\bb R^{3}.$
    If $\gamma$ is a flow line of $M_t$ with a limit point,
    then the limit point is unique and the unit tangents of $\gamma$ converge.
\end{cor}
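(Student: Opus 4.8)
The plan is to reduce the corollary to Theorem~\ref{thm:AT-general}, handling spherical limit points by the $C^2$ theory of Colding--Minicozzi. First I would record the structure of the flow. By White's theorem on the topology of surfaces moving by mean curvature \cite{W95}, Brendle's classification of embedded genus-zero self-shrinkers \cite{Br16}, and the Bamler--Kleiner multiplicity one theorem \cite{BK23}, every tangent flow of the mean curvature flow starting from an embedded two-sphere $M\sbst\bb R^3$ is a multiplicity one round sphere, neck, or plane; in particular the level set flow starting from $M$ does not fatten (this follows from \cite[Proof of Theorem~1.12]{CHH} together with \cite[Corollary~1.3]{BK23}). Hence the inner flow, the outer flow, and the level set flow starting from $M$ all coincide --- this is the flow $M_t$ of the statement --- and every singularity of $M_t$ is either a spherical singularity or an inward neck singularity.

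Next I would observe that the limit point $p$ of $\gamma$ lies in $\Sing(M_t)$: on the regular part of the flow, $|\gamma'(t)|=H_{M_t}(\gamma(t))$ is locally bounded, so a flow line accumulating at a regular space-time point can be continued through it; therefore the accumulation occurs at a singular time, and $p$ is a singular point of $M_t$. By the previous paragraph, $p$ is a neck singularity or a spherical singularity.

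In the neck case, after a space-time translation placing $p$ at the origin, the assertion is exactly Theorem~\ref{thm:AT-general}: the mean convex neighborhood theorem \cite[Theorem~9.1]{CHHW} makes $M_t$ a mean convex level set flow in a parabolic neighborhood of $p$, so near $p$ the curve $\gamma$ is a classical gradient flow line and the arguments of Section~\ref{sec:AT-neck} apply. In the spherical case, the flow is smooth and convex near $p$ by Huisken \cite{H84}, so its arrival time function $u$ is defined near $p$ and is $C^2$ there by \cite{H93} (cf.~\cite{S08,S20}); then, exactly as in the proofs of Theorem~\ref{thm:AT-neck} and Theorem~\ref{thm:AT-cyl}, the optimal \L ojasiewicz inequality \cite[Theorem~2.1]{CM19} gives the uniqueness of the limit point and \cite[Theorems~2.2, 3.3]{CM19} give the convergence of the unit tangents. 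Either way the conclusion holds.

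The main obstacle is the first paragraph: one needs the evolution of a two-sphere in $\bb R^3$ to be a genuine non-fattening flow whose only singularities are spherical or necks. This is precisely where the deep recent input enters --- Brendle's shrinker classification and, crucially, the Bamler--Kleiner resolution of the multiplicity one conjecture --- whereas, once this structure is in place, the \L ojasiewicz theorem and the Arnold--Thom conjecture for $\gamma$ follow formally from Theorem~\ref{thm:AT-general} and the $C^2$ results of \cite{CM19}.
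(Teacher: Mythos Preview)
Your proposal is correct and follows essentially the same approach as the paper: reduce to Theorem~\ref{thm:AT-general} after using \cite{W95, Br16, BK23} and \cite[Proof of Theorem~1.12]{CHH} to conclude non-fattening and that all singularities are spherical or neck. You are more explicit than the paper in separating the spherical case and invoking the $C^2$ theory from \cite{H93, CM19} there, but this is exactly what the paper relies on implicitly (cf.\ the remark in the proof of Theorem~\ref{thm:AT-neck} that spherical points are already handled by $C^2$ regularity).
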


The second situation is related to the study of generic mean curvature flows of surfaces.
The program was initiated by Colding--Minicozzi \cite{CM12}, where they proved that generalized cylinders are the only linearly stable singularity models.
The possibilities of getting generic singularities by perturbing initial data were confirmed in \cite{CCMS, CCS, SX1, SX2}, with some potential scenario of high multiplicities.
Again, with the recent work of Bamler--Kleiner \cite{BK23}, combined with the work of Chodosh–Choi--Mantoulidis–Schulze \cite{CCMS} and  Chodosh–Choi–Schulze \cite{CCS}, it is now clear that mean curvature flows starting from generic closed embedded surfaces only develop neck or spherical singularities.
Thus, Theorem~\ref{thm:AT-general} again implies the following uniqueness results.

\begin{cor}
    Let $M_t$ be a mean curvature flow starting from a generic closed embedded surface in $\bb R^{3}.$
    If $\gamma$ is a flow line of $M_t$ with a limit point,
    then the limit point is unique and the unit tangents of $\gamma$ converge.
\end{cor}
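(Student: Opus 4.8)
The plan is to reduce the statement to Theorem~\ref{thm:AT-general} together with the treatment of spherical singularities already used inside the proof of Theorem~\ref{thm:AT-neck}. First I would assemble the structural input on generic flows of surfaces. By the generic mean curvature flow program of Colding--Minicozzi \cite{CM12}, the perturbation results of Chodosh--Choi--Mantoulidis--Schulze \cite{CCMS}, Chodosh--Choi--Schulze \cite{CCS}, and Sun--Xue \cite{SX1, SX2}, and crucially the multiplicity one theorem of Bamler--Kleiner \cite{BK23}, a mean curvature flow starting from a generic closed embedded surface in $\bb R^3$ develops only multiplicity one spherical ($S^2$) and cylindrical ($S^1\times\bb R$) singularities; in $\bb R^3$ a cylindrical singularity is exactly a neck singularity. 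The same results also ensure that such a flow does not fatten, so its level set flow, inner flow, and outer flow all coincide; in particular the notion of a flow line $\gamma$ is unambiguous and agrees with the one from Section~\ref{sec:general-flow}.

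Next I would localize around a limit point $p$ of $\gamma$. After a space-time translation, assume $(p,0)$ is the relevant limiting space-time point. Since along a flow line $\gamma'(t)=\h_{M_t}(\gamma(t))$ and $|\h_{M_t}|=H$, if $(p,0)$ were a regular point of the flow, then $H$ would stay bounded in a neighborhood and $\gamma$ could be continued past it, so $p$ would not be a genuine limit point; hence $(p,0)$ is a singularity of the flow, and by the previous paragraph it is either spherical or a neck singularity. If it is a neck singularity, then (passing to the inner flow and an outward neck if necessary, which is symmetric) it is an inward neck singularity of the outer flow, and Theorem~\ref{thm:AT-general} applies directly: the limit point is unique and the unit tangents of $\gamma$ converge. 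If it is a spherical singularity, then by Huisken's theorem \cite{H93} the flow is smooth and convex in a space-time neighborhood of $(p,0)$, so the local arrival time function $u$ there is $C^2$, and the conclusion follows from the $C^2$ Arnold--Thom theorem of Colding--Minicozzi \cite{CM19}, exactly as in the spherical case handled within the proof of Theorem~\ref{thm:AT-neck}.

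The only real content is the verification that one may work in the classical mean convex setting near the singularity: by the mean convex neighborhood theorem of Choi--Haslhofer--Hershkovits--White \cite{CHHW} in the neck case, and by Huisken's theorem in the spherical case, there is a neighborhood $U$ of $(p,0)$ in which $M_t$ is a smooth mean convex level set flow, so that $\gamma$ restricted to $U$ is a genuine gradient flow line of the arrival time function on $U$; all arguments of Section~\ref{sec:AT-neck} then apply verbatim. Thus the main obstacle is not in the new argument, which is a short reduction, but in correctly invoking the deep external inputs --- classification of singularity models for generic surfaces, multiplicity one, and non-fattening --- which are precisely what the recent breakthroughs supply.
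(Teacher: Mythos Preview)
Your proposal is correct and follows essentially the same route as the paper: invoke the generic mean curvature flow results \cite{CCMS, CCS, SX1, SX2} together with Bamler--Kleiner's multiplicity one theorem \cite{BK23} to guarantee that all singularities are spherical or neck, and then apply Theorem~\ref{thm:AT-general}. You are slightly more explicit than the paper in separating out the spherical case and in noting non-fattening, but these are elaborations rather than a different approach.
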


We can also deal with generic low-entropy mean curvature flows in $\bb R^{n+1}$ with $n+1\in\set{4,5,6}$. 
By the work of Chodosh–Choi--Mantoulidis–Schulze \cite{CCMS2} and Chodosh--Mantoulidis–Schulze \cite{CMS}, if the entropy of the initial hypersurface is low, then the generic mean curvature flow only encounters cylindrical singularities.
In particular, Theorem \ref{thm:AT-general} implies the following result when the entropy is not greater than that of a bubble sheet $S^{n-2}\times \bb R^2.$

\begin{cor}
     Let $M_t$ be a mean curvature flow starting from a generic closed embedded hypersurface $M^n\subset \bb R^{n+1}$ with $n+1\in\set{4,5,6}$ and $\lambda(M) \leq \lambda(S^{n-2}\times \bb R^2)$.
    If $\gamma$ is a flow line of $M_t$ with a limit point,
    then the limit point is unique and the unit tangents of $\gamma$ converge.
\end{cor}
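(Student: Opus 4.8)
The plan is to reduce this corollary to Theorem~\ref{thm:AT-general} together with the non-degenerate cylindrical case recorded in Remark~\ref{rmk:non-deg-arnold-thom}, using the classification of singularities for generic low-entropy flows. First I would invoke the generic regularity theorems of Chodosh--Choi--Mantoulidis--Schulze \cite{CCMS2} and Chodosh--Mantoulidis--Schulze \cite{CMS} (together with the multiplicity one theorem of Bamler--Kleiner \cite{BK23} where it is needed), which apply precisely because $n+1\in\set{4,5,6}$ and $\lambda(M)\le\lambda(S^{n-2}\times\bb R^2)$: after an arbitrarily small perturbation of the initial hypersurface preserving this entropy bound, the mean curvature flow of $M$ develops only multiplicity one cylindrical singularities, i.e.\ spherical singularities modeled on $S^n$, neck singularities modeled on $S^{n-1}\times\bb R$, and bubble-sheet singularities modeled on $S^{n-2}\times\bb R^2$. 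In particular such a flow does not fatten, so its inner flow, outer flow, and level set flow coincide, and the notion of flow line from Section~\ref{sec:general-flow} is unambiguous. Since the conclusion concerns only the behavior of $\gamma$ near its limit point, it suffices to treat each of the three singularity types separately.

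For a spherical limit point the flow is convex, hence mean convex, in a neighborhood, and the associated arrival time function is $C^2$ there by Huisken \cite{H93}; thus the \L ojasiewicz inequality and the convergence of unit tangents proven by Colding--Minicozzi \cite{CM19} apply directly. For a neck limit point, Theorem~\ref{thm:AT-general} gives exactly the desired conclusion.

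It remains to handle a bubble-sheet limit point, which is the crux. I would first verify that such a singularity is non-degenerate in the sense of Definition~\ref{def:nondeg}: any non-trivial special limit flow at it is an ancient integral Brakke flow whose entropy is at most $\lambda(M)\le\lambda(S^{n-2}\times\bb R^2)$ by Huisken's monotonicity and the lower semi-continuity of entropy from Lemma~\ref{lem:ent}; by the low-entropy classification underlying \cite{CCMS2, CMS} (using in particular that $2\lambda(\bb R^n)=2>\lambda(S^{n-2})$ to exclude higher-multiplicity planes and non-cylindrical shrinkers in this range), every such limit flow is a multiplicity one cylinder. Hence the singularity is non-degenerate, Theorem~\ref{lem:nondeg-unique} shows all its special limit flows are $2$-cylinders with parallel axes, and the non-degeneracy provides a mean convex neighborhood of the singularity (cf.\ \cite{CHHW} and Remark~\ref{rmk:non-deg-arnold-thom}). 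The arguments of Sections~\ref{sec:4.2}--\ref{sec:4.3} then go through verbatim, using the off-axis $C^2$-regularity of $u$ from \cite{CM18} and the summability estimate of \cite{CM19}, and yield uniqueness of the limit point together with convergence of the unit tangents, exactly as in Theorem~\ref{thm:AT-cyl}.

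The main obstacle is precisely the bubble-sheet case, and within it the verification of non-degeneracy: one must rule out higher-multiplicity planes and non-cylindrical self-shrinkers as special limit flows, which is exactly the content of the deep generic-regularity and low-entropy results \cite{BK23, CCMS2, CMS}. Once non-degeneracy is in hand, all remaining ingredients---the mean convex neighborhood, the off-axis $C^2$-regularity of the arrival time function, and the sharp convergence rate toward the cylinder---are already available from the cylindrical case treated earlier in the paper.
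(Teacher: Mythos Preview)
Your reduction for spherical and neck limit points matches the paper's one-line derivation: the paper simply invokes \cite{CCMS2,CMS} to say that the generic low-entropy flow encounters only cylindrical singularities, and then cites Theorem~\ref{thm:AT-general}. The paper does \emph{not} carry out a separate bubble-sheet analysis; implicitly, only spherical and neck singularities occur.

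And indeed no bubble-sheet case arises here. If a closed $M$ with $\lambda(M)\le\lambda(S^{n-2}\times\bb R^2)=\lambda(S^{n-2})$ had a bubble-sheet singularity at $(x_0,t_0)$, then $\Theta_{M_t}(x_0,t_0)=\lambda(S^{n-2})$, while $\Theta_{M_t}(x_0,t_0,r)\le\lambda(M_{t_0-r^2})\le\lambda(M)\le\lambda(S^{n-2})$ for all $r>0$; Huisken's monotonicity then forces $\Theta_{M_t}(x_0,t_0,r)\equiv\lambda(S^{n-2})$, so the flow is self-similar about $(x_0,t_0)$ and its tangent flow coincides with the flow itself. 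That would make $M_{-1}$ a round bubble sheet, contradicting compactness. Hence once \cite{CCMS2,CMS} perturb away non-cylindrical shrinkers, only spheres and necks remain, and your third case is vacuous.

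Separately, your argument for that vacuous case has a real gap. Definition~\ref{def:nondeg} asks that every non-trivial \emph{special limit flow} be a cylinder, and special limit flows are merely ancient solutions, not self-shrinkers. The ``low-entropy classification underlying \cite{CCMS2,CMS}'' you invoke classifies self-shrinkers; it says nothing about general ancient flows. The bowl soliton and the ancient ovals have entropy $\lambda(S^{n-1})<\lambda(S^{n-2})$ yet are not cylinders, so the bound $\lambda\le\lambda(S^{n-2})$ on a special limit flow does not by itself force it to be a cylinder. Without non-degeneracy you cannot appeal to Remark~\ref{rmk:non-deg-arnold-thom} to obtain a mean convex neighborhood, and then Sections~\ref{sec:4.2}--\ref{sec:4.3} do not apply. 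The cleanest fix is to drop the bubble-sheet case entirely and note, via the entropy rigidity above, that it does not occur.
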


\appendix

\section{Convergence of flow line}
\label{app:A}

In this appendix, we mention a general convergence result for flow lines in a sequence of mean curvature flows with a limit.

\begin{lemma}\label{lem:flow-line-conv-general}
    Let $M^i_t\sbst\bb R^{n+1}$ be a sequence of mean curvature flows converging in $C^\infty_{\rm loc}$ to a smooth mean curvature flow $M^\infty_t$ $\pr{t\in \left(-\ovl T,0\right]}$.
    Suppose $\gamma^i(t)$ $\pr{t\in \left(-\ovl T,0\right]}$ is a flow line in $M^i_t$ with $\gamma^i(0)$ converging to $x^\infty\in M^\infty_0,$
    and suppose that 
    there exists a flow line $\gamma^\infty(t)$ $\pr{t\in \left(-\ovl T,0\right]}$ of $M^\infty_t$ with $\gamma^\infty(0)=x^\infty.$ 
    Then $\gamma^i(t)$ converges to $\gamma^\infty(t)$ on any compact subsets of $(-\ovl T,0].$
\end{lemma}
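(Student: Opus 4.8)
The plan is to set up the flow lines as solutions to ODEs with a parameter and invoke continuous dependence on data and coefficients. Write $V^i(x,t) := \h_{M^i_t}(x)$ for the mean curvature vector field on the space-time track of $M^i_t$, and similarly $V^\infty(x,t) := \h_{M^\infty_t}(x)$. The hypothesis $M^i_t \to M^\infty_t$ in $C^\infty_{\rm loc}$ means that, on any fixed compact subset $K\times[-T_0,0]$ of the (open) space-time track of $M^\infty_t$ — where $-T_0 > -\ovl T$ — the surfaces $M^i_t$ are, for $i$ large, smooth graphs over $M^\infty_t$ with graphical functions converging to $0$ in $C^\infty$. Consequently $V^i \to V^\infty$ in $C^\infty_{\rm loc}$ as vector fields on a neighborhood of the space-time track of $M^\infty_t$. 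In particular $V^\infty$ is smooth (hence locally Lipschitz in $x$, locally bounded), and the $V^i$ are uniformly Lipschitz and uniformly bounded on such compact neighborhoods, for $i$ large.

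First I would localize in time: fix $-T_0 \in (-\ovl T, 0)$ and show convergence on $[-T_0,0]$; since $T_0$ is arbitrary this gives convergence on all compact subsets of $(-\ovl T,0]$. On $[-T_0,0]$ the reference curve $\gamma^\infty$ has image in a compact set $K_0$; enlarge it to a compact neighborhood $K$ in $\bb R^{n+1}$ and a tubular space-time neighborhood $U$ of $\{(\gamma^\infty(t),t): t\in[-T_0,0]\}$ on which $V^\infty$ is smooth and the $V^i$ are defined, uniformly bounded by some $C$, and uniformly $L$-Lipschitz in $x$, for all $i\geq i_0$. Next I would run a Grönwall estimate backwards in time from $t=0$. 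Set $f_i(t) := |\gamma^i(t) - \gamma^\infty(t)|$. On the time interval where both curves stay in $U$, we have
\begin{align*}
    \gamma^i(t) - \gamma^\infty(t)
    = \gamma^i(0) - \gamma^\infty(0)
    - \int_t^0 \Big( V^i(\gamma^i(s),s) - V^\infty(\gamma^\infty(s),s) \Big)\, ds,
\end{align*}
and splitting the integrand as $\big(V^i(\gamma^i(s),s) - V^i(\gamma^\infty(s),s)\big) + \big(V^i(\gamma^\infty(s),s) - V^\infty(\gamma^\infty(s),s)\big)$ gives
\begin{align*}
    f_i(t) \leq f_i(0) + \int_t^0 L\, f_i(s)\, ds + \int_t^0 \varepsilon_i(s)\, ds,
\end{align*}
where $\varepsilon_i(s) := \sup_{x\in K}|V^i(x,s) - V^\infty(x,s)| \to 0$ uniformly on $[-T_0,0]$ by the $C^\infty_{\rm loc}$ convergence of the flows, and $f_i(0) = |\gamma^i(0) - \gamma^\infty(0)| \to 0$ by hypothesis. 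Grönwall's inequality then yields $\sup_{t\in[-T_0,0]} f_i(t) \leq \big(f_i(0) + T_0\sup_s \varepsilon_i(s)\big) e^{L T_0} \to 0$. A standard continuity/bootstrap argument ensures the curves $\gamma^i$ do not leave $U$ before time $-T_0$ once $i$ is large (if $\gamma^i$ first exits the tube at some $\tau \in (-T_0,0)$, the estimate above applied on $[\tau,0]$ forces $f_i(\tau)$ small, contradicting that $(\gamma^i(\tau),\tau)$ is on the boundary of $U$, for $i$ large). This gives $C^0$ convergence $\gamma^i \to \gamma^\infty$ uniformly on $[-T_0,0]$; convergence of derivatives of any order then follows from $\gamma^{i\,\prime}(t) = V^i(\gamma^i(t),t) \to V^\infty(\gamma^\infty(t),t) = \gamma^{\infty\,\prime}(t)$ together with $C^\infty_{\rm loc}$ convergence of $V^i$, and inductively for higher derivatives by differentiating the ODE.

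The main subtlety — and the reason the statement assumes $M^\infty_t$ is \emph{smooth} on the half-open interval and assumes a flow line $\gamma^\infty$ \emph{exists} — is that the $\gamma^i$ are only Lipschitz flow lines of possibly weak flows, so we cannot appeal to ODE uniqueness for them; the argument must be one-sided, using the smoothness and Lipschitz bound of the limiting field $V^\infty$ to absorb the error via Grönwall, with the $\gamma^i$ treated merely as integral curves. The delicate point to get right is the uniform control of $V^i$ on the tube $U$: since $\h_{M^i_t}$ is defined pointwise only where $M^i_t$ is regular, one uses that for $i$ large $M^i_t$ is graphical over the regular surface $M^\infty_t$ on the relevant compact set, so $V^i$ is genuinely smooth there with estimates converging to those of $V^\infty$. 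Everything else is the routine Grönwall/bootstrap machinery.
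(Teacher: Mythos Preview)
Your proposal is correct and follows essentially the same tube-and-bootstrap strategy as the paper: confine $\gamma^i$ to a tubular neighborhood of $\gamma^\infty$, show it cannot exit before time $-T_0$, and conclude convergence. The execution differs in that you make the ODE-stability step explicit via a Gr\"onwall estimate with a uniform Lipschitz bound on the (extended) mean curvature field, whereas the paper simply asserts that ``the locally smooth convergence implies the convergence of $\gamma^i$ to $\gamma^\infty$'' on the confined interval and then derives the contradiction from a uniform speed bound $|H|\le C$ rather than from the Gr\"onwall smallness itself. The one technical point you flag---that $V^i=\h_{M^i_t}$ is only defined on $M^i_t$, so the splitting $V^i(\gamma^i)-V^i(\gamma^\infty)$ requires an extension (e.g., via nearest-point projection or the graphical parametrization over $M^\infty_t$)---is indeed the place to be careful, but once that extension is fixed your argument goes through and is in fact more self-contained than the paper's.
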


The assumption that $\gamma^{\infty}(t)$ exists on $(-\ovl{T},0]$ is automatically satisfied in many situations. 
For example, it is true when $M_t^{\infty}$ has bounded mean curvature or when the limit flow $M_t^{\infty}$ arises as a limit flow of a mean convex mean curvature flow.
In our case, we can apply Lemma~\ref{lem:flow-line-existence} to get the existence of the limit flow lines.

\begin{proof}
    We may assume $x^\infty=0$ by translations.
    To show the convergence of $\gamma^i$ to $\gamma^\infty,$ we fix $T<\ovl T$ and take an $\varepsilon$-neighborhood $U_\varepsilon$ of the flow line $\gamma^\infty$ for a fixed $\varepsilon > 0.$
    That is, we let
    \begin{align*}
        U_\varepsilon:=
        \bigcup_{t\in[-T,0]} 
        B_{\varepsilon} \pr{\gamma^\infty(t)}.
    \end{align*}
    For each $i,$ we let 
    \begin{align*}
        s_i
        := \inf\set{
        \tau\in[-T, 0]:
        \gamma^i\pr{[\tau, 0]}\sbst U_\varepsilon
        }.
    \end{align*}
    After passing to a subsequence, we may assume $-T_\varepsilon
    := \lim\limits_{i\to\infty}
    s_i$
    exists.

    If it were not true that $T_\varepsilon=T,$
    then we would have $T_\varepsilon<T.$
    Thus, $\gamma^i\pr{[-T_\varepsilon,0]}$ is contained in $U_\varepsilon$ for $i$ large.
    The locally smooth convergence implies the convergence of $\gamma^i$ to $\gamma^\infty$ on $[-T_\varepsilon, 0].$
    In particular, $\gamma^i(-T_{\varepsilon})\rightarrow \gamma^{\infty}(-T_{\varepsilon})$ and $s_i < -T_{\varepsilon}$ for all large $i$. 

    By the definition and the smoothness of $\gamma^i$, we have $\gamma^i(s_i) \in \partial U_{\varepsilon}$. 
    Thus, 
    \begin{align*}
        |\gamma^{i}(s_i)- \gamma^i(-T_{\varepsilon})| \geq \frac{\varepsilon}{2}
    \end{align*}
    for all large $i$. 
    Since $\gamma^{i}([s_i,0])\subset \ovl{U}_{\varepsilon}$, a compact set in $\mathbb{R}^{n+1}$, by the locally smooth convergence, we have a uniform curvature bound $|H|\leq C$ on $\gamma^i([s_i,0])$, so
    \begin{align*}
        |\gamma^{i}(s_i)- \gamma^i(-T_{\varepsilon})| \leq C|s_i - (-T_{\varepsilon})|.
    \end{align*}
    Thus, we obtain a lower bound on $|s_i - (-T_{\varepsilon})|,$ that is,
    \begin{align*}
        |s_i - (-T_{\varepsilon})| \geq \frac{\varepsilon}{2C}
    \end{align*}
    for all large $i$. 
    However, this then implies $\lim\limits_{i\rightarrow\infty} s_i < -T_{\varepsilon}$, which is a contradiction.

    Thus, we prove that $T_\varepsilon = T,$ and hence the convergence of $\gamma^i$ on $[-T,0]$ follows from the locally smooth convergence of the flows.
    This proves the convergence on any $[-T,0]\sbst (-\ovl T,0].$
\end{proof}

\section{Proof of Proposition~\ref{prop:convergence}}
\label{sec:CHHW+SS}

In this appendix, we give a streamlined proof of Proposition~\ref{prop:convergence}.

\begin{proof}[Proof of Proposition~\ref{prop:convergence}]
Without loss of generality, we may assume that $p= 0$ is the origin and the unique tangent flow at $(0,0)$ is the neck $\Gamma = \big\{(x_1,...,x_{n+1}) \ | \  x_1^2+\cdots+x_n^2 = 2(n-1)\big\}$ with axis  $\ell = \big\{(x_1,...,x_{n+1}) \ | \  x_1= x_2 =\cdots = x_n =0\big\}.$

By the definition of tangent flows, for each $\varepsilon,$ there is a constant $\rho = \rho(\varepsilon) > 0$ with $\rho(\varepsilon)\rightarrow 0$ as $\varepsilon \rightarrow 0$ such that for all $0 < r\leq \rho$ we have that
\begin{center}
	$\frac{1}{\sqrt{-t}} M_t^{(0,0,r^{-1})}$ is $\frac{\varepsilon}{4}$ close to $\Gamma$ when $t\in [-2,-\frac{1}{2}].$
\end{center}

We define  $Z^{\rho(\varepsilon)}(\bar{x},\bar{t})$ to be 
\begin{align*}
	Z^{\rho(\varepsilon)}(\bar{x},\bar{t}) := \inf
    \left\{z: 
    \begin{aligned}
    &\frac{1}{\sqrt{-t}} M_{t}^{(\bar{x},\bar{t},r^{-1})} \text{ is } \varepsilon \text{-close to some neck in }\mathcal{C}_1^0 \\ &\text{ for each } r\in (z,\rho(\varepsilon)) \text{ and } t\in \left[-2,-1/2\right]
    \end{aligned}
    \right\}.
\end{align*}
We also define $Z_i = Z^{\rho(\varepsilon)}\pr{\gamma(t_i),t_i}$.
Since $(\gamma(t_i),t_i)$ is regular point, it follows that $Z_i >0$.
Now we rescale the flow by $Z_i^{-1}$ around $(\gamma(t_i),t_i)$. 
Namely, we consider 
\begin{align*}
	M^i_t := M_t^{(\gamma(t_i),t_i,Z_i^{-1})}.
\end{align*}
Then, for each $0<r\leq \rho(\varepsilon)$, there exists $i_0 = i_0(r)$ such that for all $i\geq i_0$ we have that 
\begin{align*}
    \frac{1}{\sqrt{-t}}M_{t}^{\pr{\gamma(t_i),t_i,s^{-1}}}
    \text{ is }
    \frac\varepsilon 2
    \text{-close to }
    \Gamma
    \text{ for all }
    s\in [r,\rho(\varepsilon)]
    \text{ when }
    t\in \left[-2,-\frac{1}{2}\right].
\end{align*}
By definition, this implies that $\limsup\limits_{i\rightarrow\infty}Z_i \leq r$. 
Since $r$ is arbitrary,  we have $\lim\limits_{i\to\infty} Z_i= 0$ and $\lim\limits_{ i\rightarrow \infty}\frac{\rho(\varepsilon)}{Z_i} =\infty.$
By the convergence theorem of White (c.f \cite{W00, W03, HK1}), $M_t^i$ converges smoothly to a noncollapsed ancient solution $M_t^{\infty}$ such that $0\in M_0^{\infty}.$ 
By the classification result of \cite{BC19, BC21}, the definition of $Z$, and Lemma \ref{lem:no-bowl}, we know that $M_t^{\infty}$ must be a shrinking neck or a bowl soliton with axis $\ell'$.
Moreover,  $M^{\infty}_t$ is not $\frac{\varepsilon}{2}$-close to any neck in $\mathcal{C}_1^0$ for some $t\in \left[-2,-\frac{1}{2}\right]$.

We first claim that $\ell'$ is at most $C\varepsilon$ away from $\ell$. 
(Namely, the angle between $\ell'$ and $\ell$ is at most $C\varepsilon.$)
In fact, by the definition of $Z$, we have $\ovl{M}_{\tau}^i = e^{\frac{\tau}{2}}M_{-e^{-\tau}}^i$ is $\varepsilon$-close to $\Gamma$ for all $\tau \in \left[-\ln 2 - \ln \frac{\rho(\varepsilon)}{Z_i}, \ln 2\right]$.  
By Colding--Minicozzi's \L ojasiewicz inequality \cite{CM15}, the axis of the cylinder has total rotation at most $C\varepsilon$ during 
    $\tau \in \left[ - \ln\frac{\rho(\varepsilon)}{Z_i}, 0\right]$ 
    (c.f. \cite[Proposition~4.1]{GH} and \cite[Lemma~A.48]{CM15}). 
    By definition of $M^i$ and $\rho(\varepsilon)$, $\ovl{M}_{\tau}^i$ is $\varepsilon$-close to $\Gamma$ when $\tau = -\ln\frac{\rho(\varepsilon)}{Z_i}$ as long as $i$ is large enough. 
    Therefore, $\ovl{M}_{\tau}^i$ is $C\varepsilon$-close to $\Gamma$ for all $\tau \in \left[ - \ln\frac{\rho(\varepsilon)}{Z_i}, 0\right]$.  
    Passing to the limit proves the claim.

We next claim that $C^{-1} < H(\gamma(t_i),t_i)Z_i \leq C$. 
If this is not the case, then passing to the limit, we have $H(0,0) = 0$ or $H^{-1}(0,0) = 0$ in $M^{\infty}_t$. 
The first option is not consistent with the only two possibilities of $M^{\infty}_t$. 
For the second case, $M^{\infty}_t$ must be a shrinking neck with extinction time at 0 and with axis passing through the origin $0$. 
However, this contradicts the fact that $M^{\infty}_t$ is not $\frac{\varepsilon}{2}$-close to any neck in $\mathcal{C}_1^0$ for some $t\in \left[-2,-\frac{1}{2}\right]$. 

As a consequence, any subsequential limit   $\td{M}^{\infty}_t$ of $M_t^{(\gamma(t_i),t_i,H(\gamma(t_i),t_i)^{-1})}$  must be a parabolic rescaling of $M_t^{\infty}$, i.e.,  $\td{M}^{\infty}_t$ = $\lambda {M}^{\infty}_{\lambda^{-2}t}$ for some $\lambda \in [C^{-1},C]$. 
Hence, $\td{M}^{\infty}_t$ is still a shrinking neck or bowl soliton with axis at most $C\varepsilon$ away from $\ell$.

Since $\varepsilon$ is arbitrary, we conclude that (after passing to a further subsequence) $\td{M}^{\infty}_t$ is a shrinking neck or bowl soliton with its axis exactly $\ell$. 
This finishes the proof.
\end{proof}

\end{document}